\newtheorem{thm}{Theorem}[section]
\newtheorem{cor}[thm]{Corollary}
\newtheorem{prop}[thm]{Proposition}
\newtheorem{lem}[thm]{Lemma}
\newtheorem{quest}[thm]{Question}
\theoremstyle{definition}
\newtheorem{defn}[thm]{Definition}
\newtheorem{exmp}[thm]{Example}
\newtheorem{fact}[thm]{Fact}
\newtheorem{claim}[thm]{Claim}
\theoremstyle{remark}
\newtheorem{rem}[thm]{Remark}
\newcommand{\tp}{\text{tp}}
\newcommand{\GS}{\mathrm{GS}}
\newcommand{\acl}{\operatorname{acl}}
\let\c@equation\c@thm
\numberwithin{equation}{section}
\def\Ind{\setbox0=\hbox{$x$}\kern\wd0\hbox to 0pt{\hss$\mid$\hss} \lower.9\ht0\hbox to 0pt{\hss$\smile$\hss}\kern\wd0} 
\def\Notind{\setbox0=\hbox{$x$}\kern\wd0\hbox to 0pt{\mathchardef \nn=12854\hss$\nn$\kern1.4\wd0\hss}\hbox to 0pt{\hss$\mid$\hss}\lower.9\ht0 \hbox to 0pt{\hss$\smile$\hss}\kern\wd0} 
\def\ind{\mathop{\mathpalette\Ind{}}} 
\def\nind{\mathop{\mathpalette\Notind{}}} 
\title{Generic Stability Independence and Treeless theories}
\author{Itay Kaplan}
\address{Einstein Institute of Mathematics, Hebrew University of
Jerusalem, 91904, Jerusalem Israel.}
\email{kaplan@math.huji.ac.il}
\author{Nicholas Ramsey}
\address{Department of Mathematics, University of Notre Dame, 255 Hurley, Notre Dame, IN 46556}
\email{sramsey5@nd.edu}
\author{Pierre Simon}
\address{Dept.\ of Mathematics, University of California, Berkeley, 970 Evans 
Hall \#3840, Berkeley, CA 94720-3840 USA}
\email{simon@math.berkeley.edu}
\thanks{Kaplan would like to thank the Israel Science Foundation for their
support of this research (grants no. 1254/18 and 804/22).
Simon was partially supported by the NSF (grants no. 1665491 and 1848562).}
\date{\today}
\begin{document}

\begin{abstract}
We initiate a systematic study of \emph{generic stability independence} and introduce the class of \emph{treeless theories} in which this notion of independence is particularly well-behaved.  We show that the class of treeless theories contains both binary theories and stable theories and give several applications of the theory of independence for treeless theories.  As a corollary, we show that every binary NSOP$_{3}$ theory is simple.  
\end{abstract}

\maketitle

\setcounter{tocdepth}{1}
\tableofcontents

We introduce the class of \emph{treeless theories}.  These theories are defined in terms of a certain kind of indiscernible collapse which informally corresponds to the inability of the theory to code trees.  This approach carves out a natural model-theoretic setting that contains both the stable theories and the binary theories. We build on the study of generically stable partial types begun in \cite{MR4049222} to develop a theory of independence, called GS-independence, which allows us to establish the rudiments of a structure theory for this class. Although the genesis of this approach comes from NIP theories, we show that treelessness has strong consequences for the largely orthogonal setting of theories in the SOP$_{n}$ hierarchy. 

We begin, in \cref{partialtypes}, with a study of generically stable global partial types, as defined in \cite{MR4049222}.  We show that, in an arbitrary theory, every complete type over a set of parameters $A$ extends to a unique maximal global partial type which is generically stable over $A$.  This is then used to define $\mathrm{GS}$-independence:  $a$ is said to be $\mathrm{GS}$\emph{-independent} from $b$ over $A$ if $b$ satisfies $\pi|_{Aa}$, where $\pi$ is the maximal global partial type which is generically stable over $A$ and extends $\text{tp}(b/A)$. In \cref{GS-independence section} we study the properties of this independence relation in general and find that it satisfies many of the basic properties of independence relations. 

In order to define treeless theories, we introduce in \cref{treeless section}, a new kind of indiscernible tree, which we call a \emph{treetop indiscernible}.  The index structure in a treetop indiscernible is, in essence, the same as that of a strongly indiscernible tree, together with a predicate identifying the leaves of the tree.  We show that finite trees (in a language with symbols for the tree partial order, the lexicographic order, and the binary meet function) together with a predicate for the leaves form a Ramsey class and hence structures with this age give rise to a sensible notion of generalized indiscernible. In the tree $\omega^{\leq \omega}$, the set $\omega^{\omega}$ of leaves carries the structure of a dense linear order (under $<_{lex}$), but also carries considerably more structure induced by the tree structure. The treeless theories are defined in \cref{treeless section} to be those theories in which, in any treetop indiscernible, this additional structure on the leaves is irrelevant, that is, the sequence of tuples indexed by the leaves ordered lexicographically is an indiscernible sequence.  In \cref{sec:symmetry and base mon} we connect treelessness to the above-mentioned work on $\mathrm{GS}$-independence, showing that, in treeless theories, GS-independence is symmetric and satisfies base monotonicity. 


In \cref{Stable section}, we prove that all stable theories are treeless and then in the remaining sections, we explore the consequences treelessness has for the SOP$_{n}$ hierarchy. In \cref{simple section}, we prove that NSOP$_1$ treeless theories are simple.  We obtain this result as a rapid consequence of the fact that GS-independence and Kim-independence coincide over models in NSOP$_{1}$ theories, but we also give an alternative argument for the corollary that binary NSOP$_{1}$ theories are simple, using only tools from the theory of Kim-independence, which may be of independent interest. In \cref{nsop3 section}, we show that every treeless NSOP$_{3}$ theory with indiscernible triviality is NSOP$_{2}$.  These hypotheses are met by any binary NSOP$_{3}$ theory and therefore, modulo Mutchnik's recent result \cite{mutchnik2022nsop} that NSOP$_{1}$ = NSOP$_{2}$, our results establish that every binary NSOP$_{3}$ theory is simple. This means, for example, that the known classification for binary homogeneous structures due to \cite{Koponen} applies directly to the \emph{a priori} much broader class of homogeneous binary NSOP$_{3}$ structures.

\section{Generically stable partial types} \label{partialtypes}

In the following two subsections, we recall definitions and basic properties of generically stable partial types from \cite{MR4049222}. The main result of the section is Corollary \ref{maximal extension}, which entails that every complete type over a set $A$ has a unique maximal extension to a global partial type which is generically stable over $A$.  This will serve as the basis of a notion of independence introduced in Section \ref{GS-independence section}. 

\subsection{ind-definable partial types}

We will work in a monster model $\mathbb{M}$ of a fixed complete theory $T$.  A partial type $\pi(x)$ (over $\mathbb{M}$) is a consistent set of formulas with parameters in $\mathbb{M}$ closed under finite conjunctions and logical consequences, that is:

$\bullet$ $\phi(x), \psi(x)\in \pi \Longrightarrow \phi(x)\wedge \psi(x) \in \pi$;

$\bullet$ $\phi(x)\in \pi \wedge \mathbb{M} \vDash \phi(x) \to \psi(x) \Longrightarrow \psi(x)\in \pi$.

Given a set $A$ of parameters, $\pi|_A$ or $\pi | A$ denotes the partial type obtained by taking the subset of $\pi$ composed of formulas with parameters in $A$. Note that, because we require $\pi$ to be closed under logical consequence, if $a\vDash \pi|_A$ then $\pi \cup \mathrm{tp}(a/A)$ is consistent.

A partial type $\pi$ is $A$\emph{-invariant} if it is invariant under automorphisms of $\mathbb{M}$ fixing $A$ pointwise.

\begin{defn}
We say that a partial type $\pi$ is \emph{ind-definable} over $A$ if for every $\phi(x;y)$, the set $\{b : \phi(x;b)\in \pi\}$ is ind-definable over $A$ ({\it i.e.}, is a union of $A$-definable sets).
\end{defn}

As noted in \cite[Section 2]{MR4049222}, one can represent an $A$-ind-definable partial type as a collection of pairs $$(\phi_i(x;y),d\phi_i(y)),$$ where $\phi_i(x;y)\in L$, $d\phi_i(y)\in L(A)$ such that $\pi(x)$ is equal to $\bigcup_i \{\phi_i(x;b) : b\in d\phi_i(\mathbb{M})\}$ (the same formula $\phi(x;y)$ can appear infinitely often as $\phi_i(x;y)$). And, conversely, given a family of pairs $(\phi_i(x;y),d\phi_i(y))$, if the partial type $\pi(x)$ generated by $\bigcup_i \{\phi_i(x;b) : b\in d\phi_i(\mathbb{M})\}$ is consistent, then it is ind-definable. Observe that the partial types $(\phi(x;y),d\phi(y))$ and $(d\phi(y)\to \phi(x;y) ; y=y)$ are the same.

\begin{fact}\label{lem_def} \cite[Lemma 2.2]{MR4049222}
Let $\pi(x)$ be a partial $A$-invariant type. Then $\pi$ is ind-definable over $A$ if and only if the set $X=\{(a,\overline{b}) : \overline{b}\in \mathbb{M}^{\omega}, a\vDash \pi|A\bar b\}$ is type-definable over $A$.
\end{fact}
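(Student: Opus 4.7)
The plan is to unwind the two conditions and pass between them using compactness. For $(\Rightarrow)$, I fix a representation $\pi(x) = \bigcup_i \{\phi_i(x;b) : b\in d\phi_i(\mathbb{M})\}$ with $\phi_i \in L$ and $d\phi_i \in L(A)$. Because $\pi$ is closed under logical consequence, $\pi|_{A\bar b}$ is simply the subset of $\pi$ whose parameters lie in $A\cup\bar b$, namely $\{\phi_i(x;\bar c) : \bar c\in d\phi_i(\mathbb{M})\cap (A\cup\bar b)^{|y|}\}$. Hence $(a,\bar b)\in X$ is equivalent to: for every index $i$ and every tuple $\bar z$ with entries from $A\cup \bar b$, $d\phi_i(\bar z)\to \phi_i(a;\bar z)$. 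This exhibits $X$ as the realization set of the partial $A$-type
\[ \Sigma(x,\bar y) = \{\, d\phi_i(\bar z)\to \phi_i(x;\bar z) : i\in I,\ \bar z \text{ a tuple with entries in } A\cup \bar y \,\}, \]
so $X$ is type-definable over $A$.

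For $(\Leftarrow)$, let $\Sigma(x,\bar y)$ be a partial $A$-type with realization set $X$. To show $\pi$ is ind-definable, I fix $\phi(x;y)\in L$ and show that the $A$-invariant set $Y_\phi := \{b : \phi(x;b)\in\pi\}$ is a union of $A$-definable sets. Pick $b\in Y_\phi$ and extend to an arbitrary $\bar b\in \mathbb{M}^\omega$. Since $\phi(x;b)\in\pi|_{A\bar b}$, every realization of $\pi|_{A\bar b}$ satisfies $\phi(x;b)$, i.e.\ $\Sigma(x,\bar b)\vDash \phi(x;b)$. Compactness yields a finite conjunction $\sigma(x,y,z)$ of formulas from $\Sigma$ --- with $y,z$ labeling the finitely many coordinates of $\bar y$ used, arranged so that $y$ matches the position of $b$ --- and a sub-tuple $c$ of $\bar b$ with $\sigma(x,b,c)\vDash \phi(x;b)$. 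Setting
\[ \psi(y) := \exists z\, \forall x\, \bigl(\sigma(x,y,z)\to \phi(x;y)\bigr), \]
an $L(A)$-formula, we have $b\vDash\psi$. For any $b^*\vDash\psi$ with witness $c^*$, place $b^*$ and $c^*$ at the corresponding positions of a fresh $\bar b^*\in\mathbb{M}^\omega$; then $\sigma(x,b^*,c^*)$ is a finite portion of $\Sigma(x,\bar b^*)$, so $\Sigma(x,\bar b^*)\vDash \phi(x;b^*)$, which reverses (using the closure of $\pi$ under logical consequence) to $\phi(x;b^*)\in \pi$. Thus $\psi(\mathbb{M})\subseteq Y_\phi$, exhibiting $Y_\phi$ as a union of $A$-definable sets.

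The principal subtlety lies in the backward direction: the finite conjunction $\sigma$ extracted by compactness typically uses coordinates of $\bar y$ beyond those occupied by $b$, and these must be existentially quantified to produce an $A$-definable neighbourhood of $b$ inside $Y_\phi$. The reverse substitution works because $\bar b$ ranges freely over $\mathbb{M}^\omega$, so any witness $c^*$ for $\exists z$ in $\psi$ can be plugged into some fresh $\bar b^*$ at the required positions; everything else is bookkeeping and standard compactness.
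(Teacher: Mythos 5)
This statement appears as a cited Fact in the paper (referenced to Lemma 2.2 of the earlier work), so the paper contains no proof for me to compare against; I will simply assess your argument, which is the natural compactness-based one and is essentially correct. The backward direction is carefully done: the key observations, that $\Sigma(x,\bar b)$ has exactly the same realizations as $\pi|_{A\bar b}$, that the witness $c^*$ can be placed in a fresh $\bar b^*$, and that closure under logical consequence converts $\pi|_{A\bar b^*}\vDash\phi(x;b^*)$ into $\phi(x;b^*)\in\pi$, are all in order and assembled correctly.

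One small caution in the forward direction. You write that $\pi|_{A\bar b}$ is ``namely $\{\phi_i(x;\bar c):\bar c\in d\phi_i(\mathbb{M})\cap(A\cup\bar b)^{|y|}\}$,'' and then identify $X$ with the realization set of $\Sigma$. The inclusion $X\subseteq\mathrm{Real}(\Sigma)$ is immediate, but the reverse inclusion $\mathrm{Real}(\Sigma)\subseteq X$ requires that every $\chi\in\pi$ with parameters in $A\bar b$ is realized by some $(a,\bar b)\vDash\Sigma$; this can fail for a generic ``representation'' (e.g.\ one in which a formula $\chi(x;c)$ is only listed under a schema $\phi_i(x;y_1,y_2)$ whose instance at $(c,d)$ equals $\chi(x;c)$ for irrelevant $d\notin A\bar b$). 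The gap closes if you use the canonical representation: for each $L$-formula $\varphi(x;y)$, include the pairs $(\varphi, D)$ for every $A$-definable $D$ in a fixed ind-definition of $Y_\varphi=\{b:\varphi(x;b)\in\pi\}$. With that representation, writing $\chi=\varphi(x;c)$ with $c$ the essential parameter tuple (so $c\in A\bar b$), one has $c\in Y_\varphi$ and hence $c$ lies in one of the listed $D(\mathbb{M})$, so $\Sigma$ enforces $\vDash\varphi(a;c)$ as required. You should also drop the phrase ``because $\pi$ is closed under logical consequence'' at the start of the forward direction, since the identification of $\pi|_{A\bar b}$ as the set of formulas of $\pi$ with parameters in $A\bar b$ is just the definition, and closure under consequence plays no role there (it is correctly used only in the backward direction). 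With these small repairs, the proof is complete.
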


Let $\pi(x)$ and $\eta(y)$ be two $A$-invariant partial types, where $\pi$ is ind-definable over $A$. Then there is an $A$-invariant partial type $(\pi \otimes \eta)(x,y)$ such that $(a,b)\vDash \pi \otimes \eta$ if and only if $b\vDash \eta$ and $a\vDash \pi |\mathbb{M} b$. Indeed, $(\pi \otimes \eta)(x,y)$ is generated by $\eta(y)$ along with pairs $(d\phi(y,z)\to \phi(x;y,z), z=z)$ (with $\phi \in L$ and $d\phi \in L(A)$), where the partial type $(\phi(x;y,z),d\phi(y,z))$ is in $\pi(x)$. If in addition $\eta$ is ind-definable over $A$, then so is $\pi \otimes \eta$. As usual, we define inductively $\pi^{(1)}(x_0) = \pi(x_0)$ and \[\pi^{(n+1)}(x_0,\ldots,x_{n})=\pi(x_{n})\otimes \pi^{(n)}(x_0,\ldots,x_{n-1}).\] Also set \[\pi^{(\omega)}(x_0,x_1,\ldots) = \bigcup_{n<\omega} \pi^{(n)}(x_0,\ldots,x_{n-1}).\] All those types are ind-definable over $A$.

\smallskip

Instead of a partial type $\pi$, one could also consider the dual ideal $I_\pi$ of $\pi$ defined as the ideal of formulas $\phi(x)$ such that $\neg \phi(x)\in \pi$. Then an $I_\pi$-wide type (namely a type not containing a formula in $I_\pi$) is precisely a type over some $A$ containing $\pi|A$.

\subsection{Generic stability}

\begin{defn}
Let $\pi(x)$ be a partial type. We say that $\pi$ is \emph{generically stable over} $A$ if $\pi$ is ind-definable over $A$ and the following holds:

(GS) if $(a_k:k<\omega)$ is such that $a_k \vDash \pi|Aa_{<k}$ and $\phi(x;b)\in \pi$, then for all but finitely many values of $k$, we have $\vDash \phi(a_k;b)$.
\end{defn}


\begin{defn}
We say that a partial type $\pi(x)$ over $\mathbb{M}$ is \emph{finitely satisfiable in} $A$ if any formula in it has a realization in $A$ (recall that we assume $\pi$ to be closed under conjunctions). 
\end{defn}

The following facts record some basic properties of generically stable partial types:

\begin{fact}\label{lem_deffs} \cite[Lemma 2.4]{MR4049222}
Let $\pi$ be a partial type ind-definable over $A$. Let $a\vDash \pi|A$ and $b$ such that $\mathrm{tp}(b/Aa)$ is finitely satisfiable in $A$. Then $a\vDash \pi|Ab$.
\end{fact}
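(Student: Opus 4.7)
The plan is to argue by contradiction using the ind-definability representation of $\pi$ to swap parameters from $b$ to $A$ via finite satisfiability. Suppose $a \not\models \pi|Ab$, so some formula $\phi(x;b,e) \in \pi$, with $e \in A$, satisfies $\not\models \phi(a;b,e)$.

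Since $\pi$ is ind-definable over $A$, by the representation recalled in the paragraph before Fact 1.3 there is some $L$-formula $\phi(x;y,z)$ and $L(A)$-formula $d\phi(y,z)$ such that the pair $(\phi(x;y,z), d\phi(y,z))$ is part of $\pi$'s representation; in particular $\models d\phi(b,e)$, and for \emph{any} $b' \in \mathbb{M}$ with $\models d\phi(b',e)$ the formula $\phi(x;b',e)$ lies in $\pi$. Now consider the formula $\psi(y) := d\phi(y,e) \wedge \neg\phi(a;y,e)$, which has parameters in $Aa$ and is satisfied by $b$. By the hypothesis that $\text{tp}(b/Aa)$ is finitely satisfiable in $A$, we can find $b' \in A$ with $\models \psi(b')$.

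But then $\models d\phi(b',e)$, so $\phi(x;b',e) \in \pi$; since $b',e \in A$, this formula belongs to $\pi|A$. By the assumption $a \models \pi|A$ we conclude $\models \phi(a;b',e)$, contradicting the second conjunct of $\psi(b')$. This gives the desired conclusion $a \models \pi|Ab$.

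There is no real obstacle here: the entire content of the argument is the observation that ind-definability over $A$ lets one replace the condition ``$\phi(x;b,e) \in \pi$'' with the first-order condition ``$\models d\phi(b,e)$'', after which finite satisfiability of $\text{tp}(b/Aa)$ in $A$ does the work in the standard way. The only point requiring any care is bookkeeping of the $A$-parameter $e$ absorbed into $\phi$, which we separated out above so that the witness $b'$ produced by finite satisfiability can be substituted only into the ``$y$'' slot.
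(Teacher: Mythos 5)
Your proof is correct and is the standard argument: ind-definability over $A$ converts ``$\phi(x;b,e)\in\pi$'' into a first-order $A$-condition $\models d\phi(b,e)$, and finite satisfiability of $\mathrm{tp}(b/Aa)$ in $A$ then transports the witness into $A$, putting the offending formula into $\pi|A$ and contradicting $a\models\pi|A$. The paper states this as a fact cited from the literature without reproducing the proof, but your argument is the expected one, and the only delicate point---isolating the $A$-parameter $e$ so that the witness $b'$ produced by finite satisfiability is substituted only into the slot occupied by $b$---is handled correctly.
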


\begin{fact} \label{forking fact}  \cite[Proposition 2.6]{MR4049222}
Let $\pi$ be a partial type generically stable over $A$. Then:

(FS) $\pi$ is finitely satisfiable in every model containing $A$;

(NF) let $\phi(x;b)\in \pi$ and take $a\vDash \pi|A$ such that $\vDash \neg \phi(a;b)$. Then both $\mathrm{tp}(b/Aa)$ and $\mathrm{tp}(a/Ab)$ fork over $A$.
\end{fact}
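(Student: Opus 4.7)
The plan is to derive both (FS) and (NF) from the genericity property (GS), ind-definability, and Fact \ref{lem_deffs}.

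For (FS), given a model $M \supseteq A$ and $\phi(x;b) \in \pi$, my goal is to produce $m \in M$ with $\models \phi(m;b)$. By ind-definability, fix an $A$-definable $D(y)$ with $b \in D$ and $\phi(x;y) \in \pi$ for all $y \in D$. Extend $\mathrm{tp}(b/M)$ to a global coheir $q(y)$ over $M$ and let $(b_i)_{i<\omega}$ be a Morley sequence of $q$: each $b_i \equiv_M b$ lies in $D$, and $(b_i)$ is $M$-indiscernible and finitely satisfiable in $M$. Taking $a \vDash \pi|M(b_i)_i$ yields $\models \phi(a;b_i)$ for every $i$. Now finite satisfiability of $(b_i)$ in $M$ lets us pull back a witness of $\phi(x;b_i)$ to $M$; an $M$-automorphism sending a suitable $b_i$ to $b$ then gives $m \in M$ realizing $\phi(x;b)$.

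For the forking of $\mathrm{tp}(b/Aa)$ in (NF), I exhibit a concrete dividing formula in the type. The formula $D(y) \wedge \neg\phi(a;y)$ lies in $\mathrm{tp}(b/Aa)$, since $D(b)$ and $\neg\phi(a;b)$. Let $(a_k)_{k<\omega}$ be a Morley sequence of $\pi$ over $A$ with $a_0 = a$; since $\pi$ is $A$-invariant, $(a_k)$ is $A$-indiscernible. For every $y_0 \in D(\mathbb{M})$, $\phi(x;y_0) \in \pi$, so by (GS) $\models \phi(a_k;y_0)$ for cofinitely many $k$. Hence no $y_0 \in D$ can satisfy $\{\neg\phi(a_k;y_0) : k < \omega\}$, so $\{D(y) \wedge \neg\phi(a_k;y) : k < \omega\}$ is inconsistent; by $A$-indiscernibility of $(a_k)$ the inconsistency is $N$-inconsistent for uniform $N$. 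Hence $D(y) \wedge \neg\phi(a;y)$ divides over $A$, and $\mathrm{tp}(b/Aa)$ forks over $A$.

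For the forking of $\mathrm{tp}(a/Ab)$, the symmetric argument on the $b$-side is subtler, since the natural candidate $\neg\phi(x;b)$ does not obviously divide via any $A$-indiscernible sequence starting at $b$. My approach is to assume for contradiction that $\mathrm{tp}(a/Ab)$ does not fork over $A$, extend to a global non-forking type, and then deduce via Fact \ref{lem_deffs} that $a \vDash \pi|Ab$, contradicting $\neg\phi(a;b)$. The main obstacle is bridging the gap between non-forking (the hypothesis) and finite satisfiability (what Fact \ref{lem_deffs} requires), which in a general theory is nontrivial; here one exploits that Morley sequences of generically stable partial types are totally indiscernible, so that the roles of $a$ and $b$ can be swapped to reduce to a coheir situation where Fact \ref{lem_deffs} applies.
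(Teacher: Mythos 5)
This statement is a \textbf{Fact} cited from \cite{MR4049222} and is not reproved in the paper, so there is no internal proof to compare against; I will evaluate your argument on its own merits.

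Your dividing argument for the first half of (NF), that $\operatorname{tp}(b/Aa)$ forks over $A$, captures the right idea, but two steps are elided. First, you write ``since $\pi$ is $A$-invariant, $(a_k)$ is $A$-indiscernible,'' which is false for ind-definable \emph{partial} types: a Morley sequence of $\pi$ over $A$ is any sequence with $a_k\vDash\pi|_{Aa_{<k}}$, and this imposes no indiscernibility (compare the proof of \cref{prop:gen_stable_restriction}, which explicitly extracts an indiscernible Morley sequence via \cref{lem_def}, Ramsey, and compactness). Second, realizations of $\pi|_A$ need not realize $\operatorname{tp}(a/A)$, so the $a_k$ need not be $A$-conjugates of $a$, which dividing requires. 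Both issues are repaired by replacing $\pi$ with $\pi':=\pi\cup\operatorname{tp}(a/A)$ (generically stable over $A$ by \cref{lem:gen_stable_amalgam}) and extracting an $A$-indiscernible Morley sequence of $\pi'$; the rest of your (GS)-based inconsistency argument then goes through.

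Your (FS) argument has a genuine gap. You choose the coheir sequence $(b_i)$ over $M$ first, then choose $a\vDash\pi|_{M\bar b}$. After $a$ is introduced, the type $\operatorname{tp}(\bar b/Ma)$ need not be finitely satisfiable in $M$ — the coheir property was over $M$, not over $Ma$ — so ``finite satisfiability of $(b_i)$ in $M$ lets us pull back a witness of $\phi(x;b_i)$ to $M$'' is unjustified: you never produce an element of $M$ in the \emph{$x$-slot} of $\phi$. The fix is to place the coheir on the $a$-side instead: take a Morley sequence $\bar a$ of $\pi$ over $M$, replace it by $\bar a'\equiv_M\bar a$ with $\operatorname{tp}(\bar a'/Mb)$ finitely satisfiable in $M$; by $M$-invariance of $\pi$ and \cref{lem_def}, $\bar a'$ is still Morley in $\pi$ over $M$, so (GS) gives $\vDash\phi(a'_{k_0};b)$ for some $k_0$, and finite satisfiability of $\operatorname{tp}(\bar a'/Mb)$ in $M$ then yields $m\in M$ with $\vDash\phi(m;b)$.

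For the second half of (NF), that $\operatorname{tp}(a/Ab)$ forks over $A$, your text is an acknowledged sketch only. You invoke total indiscernibility of Morley sequences of generically stable partial types without proof, and the phrase ``swap the roles of $a$ and $b$ to reduce to a coheir situation where \cref{lem_deffs} applies'' does not specify the actual reduction. As it stands this part is not a proof, and the claimed bridge from non-forking to finite satisfiability (the very gap you name) is not closed.
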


%
%
%
%
%
%

\begin{fact}\label{lem_reducepi} \cite[Lemma 2.9]{MR4049222}
Let $\pi(x)$ be generically stable over $A$ and let $\pi_0(x)\subseteq \pi(x)$ be a partial ind-definable type, definable over some $A_0\subseteq A$. Then there is $\pi_*(x)\subseteq \pi(x)$ containing $\pi_0(x)$ which is generically stable and defined over some $A_*\subseteq A$ of size $\leq |A_0|+|T|$.
\end{fact}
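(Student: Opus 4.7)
The plan is to build $\pi_*$ and $A_*$ by an $\omega$-step iteration, starting from $\pi^{(0)} = \pi_0$ and $A^{(0)} = A_0$, and setting $\pi_* = \bigcup_n \pi^{(n)}$, $A_* = \bigcup_n A^{(n)}$. At each stage I would maintain the invariants that $\pi^{(n)} \subseteq \pi$ is ind-definable over $A^{(n)} \subseteq A$ and $|A^{(n)}| \leq |A_0| + |T|$. The real work is in choosing what to add at each stage so that the limit is generically stable over $A_*$, not merely contained in $\pi$.

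First I would reformulate (GS) as a compactness statement. Fix a presentation of $\pi$ as pairs $(\phi, d\phi^{j})$ with $\phi \in L$ and $d\phi^{j} \in L(A)$, and let $\Phi(x, \bar{y})$ be the partial type over $A$ provided by Fact \ref{lem_def} that expresses ``$x \models \pi\,|\,A\bar{y}$.'' Then (GS) for $\pi$ over $A$ is equivalent to the assertion that for every pair $(\phi, d\phi^{k})$ in the presentation, the partial type
\[
\Gamma^{\phi,k}(\bar{x}, y) \;=\; \bigcup_{m<\omega}\Phi(x_{m}; x_{<m}) \;\cup\; \{d\phi^{k}(y)\} \;\cup\; \{\neg \phi(x_{m}; y) : m<\omega\}
\]
is inconsistent (the ``all $m$ bad'' formulation is without loss by passing to an exceptional subsequence of a putative Morley counterexample). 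By compactness, inconsistency is witnessed by some finite $F^{\phi,k} \subseteq \Gamma^{\phi,k}$; each $\Phi$-formula appearing in $F^{\phi,k}$ originates from finitely many pairs $(\psi_{r}, d\psi_{r}^{j_{r}})$ of the presentation of $\pi$, and uses only finitely many parameters from $A$.

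At stage $n\to n+1$, for every pair $(\phi, d\phi^{k})$ currently in the presentation of $\pi^{(n)}$, I would adjoin to $\pi^{(n+1)}$ all of the finitely many pairs of $\pi$ referenced by $F^{\phi,k}$ and adjoin their parameters to $A^{(n+1)}$. Since $\pi^{(n)}$'s presentation has at most $|A_{0}| + |T|$ pairs and each contributes finitely many new ones, the cardinality bound $|A^{(n+1)}| \leq |A_{0}|+|T|$ propagates. In the limit, $\pi_{*}$ is ind-definable over $A_{*}$, contains $\pi_{0}$, sits inside $\pi$, and has $|A_{*}| \leq |A_{0}| + |T|$. To verify (GS) for $\pi_{*}$ over $A_{*}$: given a Morley sequence $(a_{m})$ for $\pi_{*}$ over $A_{*}$ and $\phi(x;b) \in \pi_{*}$ coming from a pair $(\phi, d\phi^{k}) \in \pi^{(n)}$, the analogous partial type $\Gamma^{\phi,k}_{*}$ (built using the $\Phi_{*}$ for $\pi_{*}$ over $A_{*}$) contains every formula of $F^{\phi,k}$ by virtue of the construction at stage $n+1$; thus $F^{\phi,k}$ witnesses inconsistency of $\Gamma^{\phi,k}_{*}$ and (GS) follows.

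The main obstacle is exactly that $\pi_{*}\,|\,A_{*}\bar{y}$ is strictly weaker than $\pi\,|\,A\bar{y}$, so $\Gamma^{\phi,k}_{*}$ has more candidate realizations than $\Gamma^{\phi,k}$ and the inconsistency of the latter does not automatically transfer. The iterative closure is designed precisely to ensure that the specific finite witness $F^{\phi,k}$ coming from generic stability of $\pi$ transfers intact into $\Gamma^{\phi,k}_{*}$; the bookkeeping required is simply to draw the pairs of $\pi$ underlying each such witness into $\pi^{(n+1)}$ at the appropriate stage, which is why a single Löwenheim--Skolem-style reduction does not suffice and the $\omega$-step iteration is needed.
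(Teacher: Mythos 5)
The paper states this result as a black-box Fact, cited to \cite[Lemma 2.9]{MR4049222}, and gives no in-text proof, so there is no argument in the paper itself to compare against. Your proof is correct and is the natural Löwenheim--Skolem-style argument one would expect for a statement of this shape: reformulate (GS) as the inconsistency of a collection of partial types $\Gamma^{\phi,k}$, extract finite witnesses by compactness, and run an $\omega$-length iteration to close the presentation under the pairs and parameters appearing in those witnesses.

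Two small points to tighten. First, when you say you adjoin ``their parameters,'' you should explicitly adjoin to $A^{(n+1)}$ every parameter of $A$ that appears anywhere in a formula of $F^{\phi,k}$ --- not only the parameters of the $d\psi_r^{j_r}$ --- since a $\Phi$-formula has the shape $d\psi_r^{j_r}(\tau) \to \psi_r(x_m;\tau)$ where the substituted tuple $\tau$ may itself contain elements of $A$ that need to end up in $A_*$ for the formula to lie in $\Phi_*$; you do gesture at this when you say each witness ``uses only finitely many parameters from $A$,'' but the bookkeeping in the iteration step should pick those up too. Second, the ``all $m$ bad'' reduction at the start is fine and worth keeping explicit: subsequences of sequences satisfying $a_k \vDash \pi|_{Aa_{<k}}$ again satisfy this condition by monotonicity of $\pi|_{\,\cdot\,}$, so one can indeed assume $\neg\phi(a_m;b)$ for all $m$. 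With those remarks, the verification at the end is sound: the construction guarantees $F^{\phi,k} \subseteq \Gamma^{\phi,k}_*$, and the inconsistency of the finite set $F^{\phi,k}$ (which is a purely syntactic fact independent of which larger type it sits inside) forces $\Gamma^{\phi,k}_*$ to be inconsistent, which is exactly (GS) for $\pi_*$ over $A_*$.
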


The following lemma is new, but is a strengthening of \cite[Lemma 2.11]{MR4049222}:

\begin{lem}\label{lem:gen_stable_amalgam}
Let $\pi(x), \lambda(x)$ be two partial types ind-definable over $A$. Assume that $\lambda$ is generically stable over $A$ and that $\pi(x)|_A \cup \lambda(x)|_A$ is consistent. Then $\pi(x)\cup \lambda(x)$ is generically stable over $A$. 
\end{lem}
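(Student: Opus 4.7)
I will handle three conditions in turn: consistency of $\pi \cup \lambda$ over $\mathbb{M}$, ind-definability over $A$, and (GS). The ind-definability is automatic, since the union of the families of pairs representing $\pi$ and $\lambda$ represents $\pi \cup \lambda$. The real content is bridging the gap between the hypothesis that $\pi|_A\cup\lambda|_A$ is consistent and the stronger conclusion about $\pi \cup \lambda$ in $\mathbb{M}$, together with verifying the Morley-sequence condition.

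For consistency, I start with a realization $a \vDash \pi|_A \cup \lambda|_A$ and fix an arbitrary finite fragment $\phi(x;c) \wedge \psi(x;d)$ of $\pi \cup \lambda$ with $\phi(x;c) \in \pi$ and $\psi(x;d) \in \lambda$. Using saturation of $\mathbb{M}$, I pick $c'd' \equiv_A cd$ with $\tp(c'd'/Aa)$ finitely satisfiable in $A$. $A$-invariance of $\pi$ and $\lambda$ then places $\phi(x;c')\in\pi$ and $\psi(x;d')\in\lambda$, and \cref{lem_deffs} applied to $\pi$ and to $\lambda$ separately gives $a \vDash \pi|Ac'd' \cup \lambda|Ac'd'$, so in particular $\vDash\phi(a;c')\wedge\psi(a;d')$. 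An $A$-automorphism sending $c'd'\mapsto cd$ produces a realization of $\phi(x;c)\wedge\psi(x;d)$, and compactness concludes.

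For (GS), let $(a_k)$ satisfy $a_k \vDash (\pi\cup\lambda)|Aa_{<k}$ and take a formula $\chi(x;b) \in \pi\cup\lambda$, which may be assumed to have the form $\phi(x;c)\wedge\psi(x;d)$ with $\phi(x;c) \in \pi$ and $\psi(x;d)\in\lambda$. Since $(a_k)$ is in particular a Morley sequence of $\lambda$ over $A$, (GS) for $\lambda$ gives $\vDash\psi(a_k;d)$ for cofinitely many $k$. For the $\pi$-part, pick $c' \equiv_A c$ with $\tp(c'/A(a_k)_k)$ finitely satisfiable in $A$; then \cref{lem_deffs} applied to $\pi$ yields $a_k \vDash \pi|Ac'$ for every $k$, giving $\vDash\phi(a_k;c')$ throughout. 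The remaining task is to transfer this to $\vDash\phi(a_k;c)$ for cofinitely many $k$, which I accomplish by using (GS) of $\lambda$ to force $\tp(c/Aa_k)=\tp(c'/Aa_k)$ for cofinitely many $k$, so that $\phi(a_k;c)\Leftrightarrow\phi(a_k;c')$ there.

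The main obstacle is this last transfer from $c'$ back to the originally given $c$. Producing $\vDash\phi(a_k;c')$ for all $k$ is straightforward once the finitely-satisfiable setup of $c'$ over $A(a_k)$ is arranged, but moving from $c'$ to $c$ requires exploiting (GS) of $\lambda$ in a swap argument, and is the conceptual heart of the strengthening: it is exactly where the generic stability of $\lambda$ does heavier lifting than the mere ind-definability of $\pi$.
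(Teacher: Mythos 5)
Your proposal takes a genuinely different route from the paper, and it has gaps at precisely the two points you yourself flag as the crux. For the consistency step, you pick $c'd' \equiv_A cd$ with $\tp(c'd'/Aa)$ finitely satisfiable in $A$, but this presupposes that $\tp(cd/A)$ is itself finitely satisfiable in $A$, which fails over a general parameter set $A$ (take $A = \emptyset$ in a language without constants, or any $A$ with $cd$ realizing a formula having no solution in $A$). So the coheir copy $c'd'$ need not exist, and Fact~\ref{lem_deffs} cannot be invoked. The paper never passes through coheirs: it uses generic stability of $\lambda$ directly, inductively building tuples $\bar a$ that are Morley in $\pi$ with $\bar a^*$ Morley in $\lambda$, extracting at each step the next element from a long Morley sequence of $\pi|_A\cup\lambda$ via (GS) of $\lambda$, and then by compactness obtaining a single two-sided sequence $\bar d$ from which consistency of $\pi\cup\lambda$ is read off. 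That route only uses hypotheses actually present in the lemma.

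Even granting the first step, the (GS) transfer from $\vDash\phi(a_k;c')$ to $\vDash\phi(a_k;c)$ ``by using (GS) of $\lambda$ to force $\tp(c/Aa_k)=\tp(c'/Aa_k)$ for cofinitely many $k$'' is not a consequence of generic stability of $\lambda$. (GS) of $\lambda$ controls the truth along a Morley sequence of formulas that \emph{belong to} $\lambda$; it gives no information about a formula $\theta(x;c)$ with neither $\theta(x;c)$ nor $\neg\theta(x;c)$ in $\lambda$, and such a formula can certainly separate $c$ from its coheir copy $c'$ at infinitely many $a_k$ (e.g.\ in the random graph, a Morley sequence of the ``generic'' generically stable type can be chosen with arbitrary adjacencies to a fixed external parameter $c$, while any coheir copy $c'$ over a model is nonadjacent to all of them). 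Without this transfer you have only controlled $\phi(a_k;c')$ rather than $\phi(a_k;c)$, so (GS) for $\pi\cup\lambda$ is not established. This is exactly the point at which the ind-definability of $\pi$ alone is doing too little work, and it is why the paper's argument routes through the two-sided Morley sequence rather than a coheir.
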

\begin{proof}
We show by induction on $n<\omega$ that there is $\bar a=(a_i:i<n)$ such that $\bar a\vDash \pi^{(n)}(x)|_A$ and $\bar a^* \vDash \lambda^{(n)}(x)|_A$, where $\bar a^* = (a_{n-1},a_{n-2},\ldots,a_0)$. For $n=1$, this is the hypothesis. Assume we know it for $n$, witnessed by $\bar a=(a_i:i<n)$. Since $\pi(x)|_A \cup \lambda(x)|_A$ is consistent, so is $\pi(x)|_A \cup \lambda(x)$. Let $\bar b=(b_i:i<\kappa)$ be a long Morley sequence in that partial type. Since we assume our partial types are closed under logical consequence, the fact that $\overline{a} \vDash \pi^{(n)}|_{A}$ implies that $\pi^{(n)} \cup \text{tp}(\overline{a}/A)$ is consistent.  Thus, composing by an automorphism over $A$, we may assume that $\bar a\vDash \pi^{(n)}|_{A\bar b}$. By generic stability of $\lambda$, there is $i<\kappa$ such that $b_i\vDash \lambda|_{A\bar a}$. It follows that $(b_i)^{\frown}\bar a \vDash \pi^{(n+1)}|_A$ and that ${\overline{a}^{*}}^{\frown} (b_i) \vDash \lambda^{(n+1)}|_A$. This finishes the induction.

This being done, we can construct, by Fact \ref{lem_def} and compactness, a sequence $\bar d=(d_i:i<\omega)$ which is a Morley sequence of $\pi$ over $A$ such that the sequence in the reverse order is a Morley sequence of $\lambda$ over $A$. We can further assume that $\bar d\vDash \pi^{(\omega)}|_\mathbb{M}$. The set of formulas over $\mathbb{M}$ that are true on almost all elements of $\bar d$ contains $\lambda(x)$ and therefore $\pi(x)\cup \lambda(x)$ is consistent.

Finally, we conclude that $\pi(x) \cup \lambda(x)$ is generically stable over $A$.  Let $\mu(x)$ be the partial type generated by $\pi(x) \cup \lambda(x)$.  It is clear that $\mu(x)$ is ind-definable over $A$ using \cref{lem_def} and the fact that $\{(a,\overline{b}) : \overline{b} \in \mathbb{M}^{\omega}, a \vDash \mu(x)|_{A\overline{b}}\}$ is equal to the intersection $\{(a,\overline{b}) : \overline{b} \in \mathbb{M}^{\omega}, a \vDash \pi(x)|_{A\overline{b}}\} \cap \{(a,\overline{b}) : \overline{b} \in \mathbb{M}^{\omega}, a \vDash \lambda(x)|_{A\overline{b}}\}$.  If $\varphi(x;b) \in \mu(x)$, then there are $\psi_{0}(x;c) \in \pi(x)$ and $\psi_{1}(x;d) \in \lambda(x)$ such that $\psi_{0}(x;c) \wedge \psi_{1}(x;d) \vdash \varphi(x;b)$.  Taking $I = (a_{i} : i < \omega) \vDash \mu^{(\omega)}|_{A}$, since $I$ is Morley over $A$ in both $\pi$ and $\lambda$, we know that both $\{i : \vDash \psi_{0}(a_{i};c)\}$ and $\{i : \vDash \psi_{1}(a_{i};d)\}$ are cofinite so $\{i : \vDash \varphi(a_{i};b)\}$ is cofinite as well.  This shows $\mu$ is generically stable over $A$.  

\end{proof}

\begin{cor} \label{maximal extension}
Let $p(x)\in S(A)$.  There is a unique maximal global partial type $\pi_{p}$ generically stable over $A$ consistent with $p$\textemdash that is, if $\pi$ is a global generically stable partial type consistent $p$, then $\pi \subseteq \pi_{p}$.  It follows, in particular, that $\pi_{p}$ extends $p$. 
\end{cor}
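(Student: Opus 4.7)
The plan is to realize $\pi_p$ as the union $\bigcup \mathcal{F}$, where $\mathcal{F}$ denotes the family of all global partial types that are generically stable over $A$ and consistent with $p$. I will (i) show that $\mathcal{F}$ is nonempty, (ii) show that $\mathcal{F}$ is closed under pairwise union (after taking the generated partial type), and then (iii) check that the union of all members of $\mathcal{F}$ still belongs to $\mathcal{F}$, hence is the unique maximum.

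For step (i), I would take $\pi$ to be the global partial type generated by $p$, i.e.\ the closure of $p$ under logical consequence in $\mathbb{M}$. This is $A$-invariant, and by compactness $\{b : \phi(x;b)\in \pi\} = \bigcup_{\psi \in p} \{b : \mathbb{M} \vDash \forall x(\psi(x) \to \phi(x;b))\}$ is a union of $A$-definable sets, so $\pi$ is $A$-ind-definable. Moreover, every realization of $\pi|_{A}$ already realizes $p$, so (GS) is immediate. This simultaneously gives the parenthetical conclusion $p \subseteq \pi_p$ once $\pi_p$ is constructed.

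For step (ii), the crucial observation is that any $\pi \in \mathcal{F}$ satisfies $\pi|_A \subseteq p$: if some $\psi(x) \in \pi|_A$ were missing from $p$, then $\neg\psi(x) \in p$ would violate consistency of $\pi \cup p$. Consequently, for any $\pi_1, \pi_2 \in \mathcal{F}$, the union $\pi_1|_A \cup \pi_2|_A$ lies in $p$ and so is consistent, and \cref{lem:gen_stable_amalgam} applies directly to conclude that $\pi_1 \cup \pi_2$ (the generated partial type) is generically stable over $A$. It is also consistent with $p$, since any realization of it satisfies $\pi_1|_A \subseteq p$ and hence all of $p$.

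For step (iii), set $\pi_p := \bigcup \mathcal{F}$. Closure under finite conjunctions follows from step (ii) (given $\phi \in \pi_1, \psi \in \pi_2$, the amalgam contains $\phi \wedge \psi$), and closure under logical consequence is inherited. The $A$-ind-definability passes to unions because $\{b : \phi(x;b) \in \pi_p\} = \bigcup_{\pi \in \mathcal{F}} \{b : \phi(x;b) \in \pi\}$ is a union of $A$-ind-definable sets. Finally, to verify (GS) for $\pi_p$: given $\phi(x;b) \in \pi_p$, pick $\pi \in \mathcal{F}$ with $\phi(x;b) \in \pi$; any sequence $(a_k)$ with $a_k \vDash \pi_p|_{Aa_{<k}}$ automatically satisfies $a_k \vDash \pi|_{Aa_{<k}}$, so generic stability of $\pi$ forces $\vDash \phi(a_k;b)$ for cofinitely many $k$. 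Uniqueness and maximality are then built into the definition. I do not expect a genuine obstacle; the only nontrivial ingredient is \cref{lem:gen_stable_amalgam}, which is already in hand, and the rest is bookkeeping.
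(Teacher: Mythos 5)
Your approach is the same as the paper's: define $\pi_p$ as the union of the family $\mathcal{F}$ of all global partial types that are generically stable over $A$ and consistent with $p$, using \cref{lem:gen_stable_amalgam} to amalgamate. The observation that $\pi|_A \subseteq p$ for every $\pi \in \mathcal{F}$ (because $p$ is complete over $A$) is correct and does the needed work for applying the lemma.

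The gap is in the last sentence of step (ii). You claim the amalgam $\mu := \langle \pi_1 \cup \pi_2\rangle$ is consistent with $p$ ``since any realization of it satisfies $\pi_1|_A \subseteq p$ and hence all of $p$.'' That inference is backwards: a realization of the subset $\pi_1|_A$ of $p$ need not realize $p$ (for instance, $\pi_1|_A$ could be generated by $x = x$). Consistency of $\mu$ with $p$ is genuinely something to argue. The cleanest fix is to keep the type $\pi_0 := \langle p\rangle$ from your step (i), for which $\pi_0|_A = p$, inside every amalgamation: $\pi_0|_A \cup \pi_1|_A = p$ is consistent, so $\langle\pi_0 \cup \pi_1\rangle$ is GS over $A$ by \cref{lem:gen_stable_amalgam}; it contains $p$ and is consistent, hence is consistent with $p$; iterating with $\pi_2,\pi_3,\ldots$ shows $\langle\pi_0\cup\pi_1\cup\cdots\cup\pi_n\rangle \in \mathcal{F}$ for every $n$. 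This also closes a second small gap in step (iii): you check closure properties and (GS) for $\pi_p = \bigcup\mathcal{F}$ but never that $\pi_p$ is \emph{consistent} (a union of consistent partial types need not be), and finite satisfiability of $\pi_p$ is exactly what the iterated amalgamation with $\pi_0$ supplies.
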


\begin{proof}
By \cref{lem:gen_stable_amalgam}, if $\pi(x)$ and $\lambda(x)$ are two generically stable partial types consistent with $p$, ind-definable over $A$, then $\pi(x)\cup \lambda(x)$ is consistent and even generically stable over $A$. Hence we can define $\pi_p(x)$ as the union of all generically stable partial types consistent with $p$ and ind-definable over $A$. Then $\pi_p(x)$ is consistent and is the maximal $A$-invariant generically stable partial type consistent with $p$.  As $p$ itself is generically stable over $A$, it follows that $\pi_{p}$ extends $p$. 
\end{proof}

\begin{lem} \label{equiv rel GS-forks}
Suppose $p(x)$ is a complete type over $A$ and $E(x,y)$ is an equivalence relation which is $\bigvee$-definable over $A$ and has unboundedly many classes represented by realizations of $p$. If $\pi \supseteq p$ is the maximal generically stable partial type over $A$ extending $p$, then $\pi \vdash \neg E(x,c)$ for all $c \in \mathbb{M}$.  
\end{lem}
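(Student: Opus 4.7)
The plan is to introduce an auxiliary generically stable partial type $\lambda(x)$ over $A$ expressing that ``$x$ lies in no $E$-class represented in $\mathbb{M}$'' and then absorb it into $\pi$ by combining the amalgamation \cref{lem:gen_stable_amalgam} with the maximality of $\pi = \pi_{p}$ from \cref{maximal extension}. Concretely, writing $E(x,y) = \bigvee_{i} \varphi_{i}(x,y)$ with each $\varphi_{i} \in L(A)$, let $\lambda(x)$ be the partial type generated by $\{\neg \varphi_{i}(x,c) : i,\ c \in \mathbb{M}\}$.

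I would verify three properties of $\lambda$. First, $\lambda$ is ind-definable over $A$ via the presentation $(\neg \varphi_{i}(x;y),\ y=y)$, since for each $i$ every $c \in \mathbb{M}$ appears as a parameter. Second, $\pi|_{A} \cup \lambda|_{A} = p \cup \{\neg E(x,c) : c \in A\}$ is consistent: any finite subcollection names finitely many $E$-classes in $A$, and since $p$ has unboundedly many $E$-classes represented, some realization of $p$ avoids them all. Third, and this is where the real content lies, $\lambda$ is generically stable over $A$. Given any sequence $(a_{k} : k < \omega)$ with $a_{k} \vDash \lambda|_{Aa_{<k}}$, each $a_{k}$ satisfies $\neg E(a_{k},a_{j})$ for all $j < k$, so the sequence is pairwise $\neg E$. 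Now fix any $\neg \varphi_{i}(x,b) \in \lambda$. If two indices $k < k'$ both satisfied $\varphi_{i}(a_{k},b)$ and $\varphi_{i}(a_{k'},b)$, transitivity of $E$ would force $E(a_{k},a_{k'})$, contradicting $\neg E(a_{k},a_{k'})$. Hence $\varphi_{i}(a_{k},b)$ holds for at most one $k$, so $\neg \varphi_{i}(a_{k},b)$ holds for cofinitely many $k$, which is exactly condition (GS).

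With these in hand, \cref{lem:gen_stable_amalgam} applied to the ind-definable partial types $\pi$ and $\lambda$ yields that $\pi \cup \lambda$ is itself generically stable over $A$ and consistent with $p$. The maximality clause in \cref{maximal extension} then forces $\pi \cup \lambda \subseteq \pi$, and therefore $\lambda \subseteq \pi$, which is precisely the conclusion $\pi \vdash \neg E(x,c)$ for every $c \in \mathbb{M}$. The only real obstacle is the (GS) verification, which hinges crucially on the equivalence-relation (transitivity) structure of $E$; the rest is a routine assembly of the tools already developed in this section.
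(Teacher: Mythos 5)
Your proof is correct and uses the same core idea as the paper: build a partial type from the formulas $\neg\varphi_i(x,c)$, verify generic stability via the observation that pairwise-inequivalent realizations can have at most one element in any given $E$-class, and then invoke maximality. The only difference is a small detour: the paper simply packages $p \cup \{\neg E(x,c) : c \in \mathbb{M}\}$ into a single ind-definable type $\pi_0$, checks that $\pi_0$ is generically stable, and applies \cref{maximal extension} directly, whereas you first show your $\lambda$ is generically stable and then route through \cref{lem:gen_stable_amalgam} to amalgamate it with $\pi$ before applying maximality. That extra step is harmless but unnecessary\textemdash since $\lambda$ is generically stable over $A$ and consistent with $p$, \cref{maximal extension} already gives $\lambda \subseteq \pi$ with no amalgamation required. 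One further minor point worth tidying: the (GS) verification should be stated for arbitrary formulas of $\lambda$, not just the generators $\neg\varphi_i(x,b)$; since each formula of $\lambda$ is a consequence of a finite conjunction of generators, and each generator fails on at most one index, the formula still holds cofinitely, so the conclusion survives.
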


\begin{proof}
Let $\pi_{0}(x)$ be the global partial type defined by 
$$
\pi_{0}(x) = p(x) \cup \bigcup \{\neg E(x;c) : c \in \mathbb{M}\},
$$
which is a consistent partial type by our assumption that $E(x,y)$ is $\bigvee$-definable and has unboundedly many classes among realizations of $p$. We have $\pi_{0}$ is ind-definable over $A$ since, writing $E(x,y) = \bigvee \psi_{i}(x,y)$, we can ind-define $\pi_{0}$ via the schema $(\varphi(x), y=y)_{\varphi(x) \in p}$ and $(\neg \psi_{i}(x,y), y = y)_{i}$.   If $(a_{i} : i < \omega)$ is a sequence with $a_{i} \vDash \pi_{0}|_{Aa_{<i}}$, then we have $\neg E(a_{i},a_{j})$ for all $i \neq j$.  Therefore, if $c \in \mathbb{M}$, then $c$ can be $E$-equivalent to at most one $a_{i}$.  Therefore, if $\chi(x,c) \in \pi_{0}$, then we have $\vDash \chi(a_{j},c)$ for all but at most one $j$.  This shows $\pi_{0}$ is a generically stable partial type over $A$ and is therefore contained in the maximal one extending $p$ by \cref{maximal extension}. \end{proof}

The following proposition is essentially \cite[Remark 6.13]{MR4049222}:

\begin{prop}\label{prop:gen_stable_restriction}
Let $\pi(x,y)$ be generically stable over $A$. Then the partial type $\eta(x) = (\exists y)\pi(x,y)$ (which is also the restriction of $\pi$ to the $x$ variable) is generically stable over $A$.
\end{prop}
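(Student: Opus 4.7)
The plan has two parts: verify that $\eta$ is ind-definable over $A$, and verify the averaging condition (GS).

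\emph{Ind-definability.} By compactness and closure of $\pi$ under conjunctions, $\phi(x;c) \in \eta$ if and only if there exists $\psi(x,y;e) \in \pi$ with $\models \forall xy\,(\psi(x,y;e) \to \phi(x;c))$. Using the schema representation of $\pi$ as pairs $(\psi_i(x,y;w), d\psi_i(w))$ with $d\psi_i \in L(A)$, the set $\{c : \phi(x;c) \in \eta\}$ is the union over $i$ of the $A$-ind-definable sets $\{c : \exists e\,(d\psi_i(e) \wedge \models \forall xy\,(\psi_i(x,y;e) \to \phi(x;c)))\}$, so $\eta$ is ind-definable over $A$.

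\emph{Generic stability.} Fix $(a_k : k < \omega)$ with $a_k \vDash \eta|_{Aa_{<k}}$ and $\phi(x;c) \in \eta$; pick $\psi(x,y;e) \in \pi$ with $\models \forall xy\,(\psi(x,y;e) \to \phi(x;c))$. The strategy is to build a \emph{Morley companion} $(b_k : k < \omega)$ such that $((a_k, b_k) : k < \omega)$ is a Morley sequence of $\pi$ over $A$ --- i.e., $(a_k, b_k) \vDash \pi|_{Aa_{<k}b_{<k}}$ for every $k$. Once this is achieved, the (GS) property of $\pi$ applied to $\psi(x,y;e) \in \pi$ yields $\vDash \psi(a_k, b_k; e)$, and hence $\vDash \phi(a_k; c)$, for all but finitely many $k$. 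The $b_k$'s are constructed by induction on $k$, maintaining the invariant that $a_j \vDash \eta|_{Aa_{<j}b_{<k}}$ for every $j \geq k$ (which holds at $k = 0$ by hypothesis on $(a_k)$). At step $k$, the invariant at $j = k$ supplies some $b$ with $(a_k, b) \vDash \pi|_{Aa_{<k}b_{<k}}$; one picks $b_k$ extending $\text{tp}(b/Aa_{<k}b_{<k})$ to a global type that is a coheir over a sufficiently small model containing $Aa_{\leq k}b_{<k}$. Applying \cref{lem_deffs} to $\eta$ --- viewed as ind-definable over $Aa_{<j}b_{<k}$ --- then delivers $a_j \vDash \eta|_{Aa_{<j}b_{\leq k}}$ for every $j > k$, preserving the invariant and allowing the induction to continue.

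The principal technical obstacle lies in this inductive construction: one must simultaneously arrange that each $b_k$ realizes $\pi|_{Aa_{<k}b_{<k}}$ paired with $a_k$ and inherits the finite-satisfiability property needed for \cref{lem_deffs} to kick in at all later stages. Reconciling the ``finitely satisfiable in a set" wording of \cref{lem_deffs} with the ``fs in a model" coheir extensions produced by the standard existence argument may require some care in choosing the submodels containing the base parameters, but the underlying idea is a direct application of \cref{lem_deffs} to lift the averaging behavior of $\pi$ along the projection $\pi \mapsto \eta$.
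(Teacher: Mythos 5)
Your ind-definability argument is fine (and slightly more explicit than the paper's, which just invokes \cref{lem_def} twice); and your overall plan for (GS) --- build a companion sequence $(b_k)$ so that $((a_k,b_k))_k$ is a Morley sequence of $\pi$ over $A$, then quote (GS) for $\pi$ --- is exactly the paper's plan. But the step you flag at the end as ``may require some care'' is in fact where the argument breaks, and it cannot be patched by a cleverer choice of submodel. Unwind what \cref{lem_deffs} asks for: to upgrade $a_j\vDash\eta|_{Aa_{<j}b_{<k}}$ to $a_j\vDash\eta|_{Aa_{<j}b_{\le k}}$ you must apply \cref{lem_deffs} with base $B_j:=Aa_{<j}b_{<k}$, and this requires $\mathrm{tp}(b_k/B_ja_j)$ to be finitely satisfiable \emph{in $B_j$}. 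Taking $b_k$ to be a coheir over a model $M\supseteq Aa_{\le k}b_{<k}$ gives finite satisfiability in $M$, not in $B_j$, and $B_j$ need not contain any model; one would need $M\subseteq Aa_{\le k}b_{<k}$, which is generally not a model. The other reading --- apply \cref{lem_deffs} with base $M$ --- fails for the symmetric reason: you would need $a_j\vDash\eta|_M$, but the invariant only gives $a_j\vDash\eta|_{Aa_{<j}b_{<k}}$, and $M$ may contain parameters (including elements $a_\ell$ with $\ell\ge j$) over which $a_j$ has no reason to realize $\eta$. So neither instantiation of \cref{lem_deffs} is available, and there is no way to select $b_k$ that simultaneously realizes the needed instance of $\pi$ and keeps all the future $a_j$'s on $\eta$.

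The paper sidesteps this precisely by \emph{not} keeping the sequence $(a_k)$ fixed. It argues by contradiction: assume $(a_k)\vDash\eta^{(\omega)}|_A$ and $\neg\phi(a_k;c)$ holds for all $k$ with $\phi(x;c)\in\eta$, and observe that any $(a_k')\equiv_A(a_k)$, together with the correspondingly moved $c'$, is an equally good witness. This freedom is what makes the induction go: at stage $k+1$, instead of trying to arrange $\mathrm{tp}(b_k/\cdots)$ to be a coheir, one takes an automorphism $\sigma$ fixing $Aa_{\le k}$ with $\sigma(a_{k+1})\vDash\eta|_{Aa_{\le k}b_{\le k}}$ (possible since $a_{k+1}\vDash\eta|_{Aa_{\le k}}$ and $\eta$ is closed under logical consequence), and replaces $a_{>k}$ by $\sigma(a_{>k})$. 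This leaves $a_{\le k},b_{\le k}$ and the type of the full sequence over $A$ unchanged, so the witness property is preserved and $b_{k+1}$ can be found. In short: the missing idea is to trade in the fixed sequence $(a_k)$ for an $A$-conjugate at each step rather than to engineer coheir $b_k$'s; without this, the invariant you state cannot be maintained, and \cref{lem_deffs} is not the right tool to make it hold.
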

\begin{proof}
Note that for any set $B\supseteq A$, $\pi | B = (\exists y)(\pi(x,y)|_B)$.

Since $\pi(x,y)$ is $A$-invariant, $\eta(x)$ is also $A$-invariant. We first show that $\eta$ is ind-definable using \cref{lem_def}. Fix a variable $\bar z$ and let $X(x,y,\bar z)$ be the set of triples $\{(a,b,\bar c) :(a,b) \vDash \pi|A\bar c\}$. For any tuples $a$ and $\bar c$, we have $a\vDash \eta|A\bar c$ if and only if there is $b$ such that $(a,b,\bar c)\in X$. As $X$ is type-definable by \cref{lem_def}, this whole condition is type-definable. By one more application of \cref{lem_def}, $\eta$ is ind-definable.

We next show (GS). Assume for a contradiction that for some $\phi(x;c)\in \eta$, the set $\eta^{(\omega)}(x_k:k<\omega) \cup \{\neg \phi(x_k;c) : k<\omega\}$ is consistent. Let $(a_k)_{k<\omega}$ realize it. Note that if we replace $(a_k:k<\omega)$ by a sequence $(a'_k:k<\omega)$ which has the same type over $A$, then we can find $c'\equiv_A c$ such that $\neg \phi(a'_k;c')$ holds for all $k$. By invariance of $\eta$, we have $\phi(x;c')\in \eta$, so $(a'_k : k < \omega)$ also witnesses a failure of (GS).

We build by induction on $k$ tuples $(b_k:k<\omega)$ such that $\mathrm{tp}(a_k,b_k/A)=\mathrm{tp}(a,b/A)$ and $(a_k,b_k)\vDash \pi | Aa_{<k}b_{<k}$. We can find $b_0$ since $a_0\vDash \eta|A$. Assume we have found $b_k$. As $a_{k+1}\vDash \eta|Aa_{\leq k}$, there is an automorphism $\sigma$ fixing $Aa_{\leq k}$ such that $\sigma(a_{k+1})\vDash \eta | Aa_{\leq k}b_{\leq k}$. By the remark above, we may replace the sequence $a_{>k}$ by $\sigma(a_{>k})$, since this does not alter the type of the full sequence $(a_i)_{i<\omega}$. Hence we may assume that actually $a_{k+1}\vDash \eta|Aa_{\leq k}b_{\leq k}$ and then we find $b_{k+1}$ as required.

We now have a sequence $(a_kb_k:k<\omega)$ such that $(a_k,b_k)_{k<\omega}\vDash \pi^{(\omega)}(x_k: k<\omega)$ and $c$ such that $\phi(x;c)\in \pi$ and $\neg \phi(a_k;c)$ holds for all $k$. Since the condition $(a_k,b_k)_{k<\omega}\vDash \pi^{(\omega)}(x_k :k<\omega)|_{A}$ is type definable by \cref{lem_def}, we can apply Ramsey and compactness and assume that the sequence $(a_kb_k:k<\omega)$ is indiscernible over $Ac$. Using (GS) for the type $\pi$, we conclude that for every $k$, $(a_k,b_k)\vDash \pi|Ac$. But by the definition of $\eta$, this means that $a_k\vDash \eta|Ac$. Contradiction.
\end{proof}

The following corollary is \cite[Proposition 2.13]{MR4049222}.  It follows immediately from \cref{prop:gen_stable_restriction}. 

\begin{cor}\label{prop_gsquant}
Let $\alpha(y)$ be a partial type, generically stable over $A$. Fix some $a,b\in \mathbb{M}$, $b\vDash \alpha(y)|_A$ and let $\rho(x,y)\subseteq \mathrm{tp}(a,b/A)$. Then the partial type $\pi(x):= (\exists y) (\alpha(y) \wedge \rho(x,y))$ is generically stable over $A$.
\end{cor}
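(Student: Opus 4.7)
The plan is to reduce the statement to \cref{prop:gen_stable_restriction} by producing a generically stable partial type in the pair of variables $(x, y)$ whose image under $(\exists y)$ is exactly $\pi(x)$. Specifically, view $\alpha(y)$ as a partial type in variables $(x, y)$ that happens not to mention $x$, and let $\tau(x, y)$ be the partial type over $\mathbb{M}$ generated by $\alpha(y) \cup \rho(x, y)$. Then, essentially by definition, $\pi(x)$ is the restriction of $\tau$ to the variable $x$ in the sense of \cref{prop:gen_stable_restriction}, so it suffices to show that $\tau$ is generically stable over $A$.

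To do this I would apply \cref{lem:gen_stable_amalgam} with $\alpha(y)$ (viewed in variables $(x, y)$) playing the role of the generically stable partial type and $\rho(x, y)$ playing the other role. Generic stability of $\alpha$ in the enriched variable $(x, y)$ follows from generic stability in variable $y$, since for any Morley sequence $((a_k, b_k) : k < \omega)$ over $A$, the second coordinates form a Morley sequence of $\alpha$ over $A$, and the relevant formulas of $\alpha$ only mention $y$. Ind-definability of $\rho(x, y)$ over $A$ is immediate from the pairs $(\varphi(x, y), z = z)$ for $\varphi \in \rho$, since every formula in $\rho$ has parameters in $A$. Consistency of the $A$-restrictions is witnessed by the pair $(a, b)$ from the hypothesis: $b \vDash \alpha|_A$ is given outright, and $(a, b) \vDash \rho$ because $\rho \subseteq \tp(a, b/A)$.

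Combining these observations, \cref{lem:gen_stable_amalgam} yields that $\tau(x, y)$ is generically stable over $A$, and then \cref{prop:gen_stable_restriction} delivers the conclusion for $\pi(x)$. No step here is substantive; the corollary really is just a combination of the amalgamation lemma and the projection proposition, and the only thing one needs to be careful about is the routine bookkeeping involved in viewing $\alpha(y)$ as a partial type in the pair of variables $(x, y)$ without losing ind-definability or generic stability.
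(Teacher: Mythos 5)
Your proof is correct and is essentially the intended argument. The paper gives no details beyond "follows immediately from \cref{prop:gen_stable_restriction}," and the natural way to make that precise is exactly what you do: amalgamate $\alpha$ (with dummy variable $x$) and the global closure of $\rho$ via \cref{lem:gen_stable_amalgam}, observe that $\pi$ is the $x$-restriction of the resulting type, and invoke \cref{prop:gen_stable_restriction}. Your verification of the hypotheses of \cref{lem:gen_stable_amalgam} is right; it may be worth noting explicitly that the global closure of $\rho$ is itself (trivially) generically stable over $A$, since any realization of $\rho|_A$ realizes all of $\rho$ --- this makes the application of the amalgamation lemma robust regardless of how one reads its precise hypothesis on $\pi$.
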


\section{GS-independence} \label{GS-independence section}

We write $a\ind^{\mathrm{GS}}_A b$ if for every partial type $\pi(x)$ generically stable over $A$, if $b\vDash \pi|_A$, then $b\vDash \pi|_{Aa}$. Note that this is equivalent to saying that $b \vDash \pi_*|_{Aa}$, where $\pi_*$ is the maximal $A$-invariant generically stable partial type extending $\mathrm{tp}(b/A)$.  If $p$ is a partial type, we say that $p$ $\mathrm{GS}$\emph{-forks} over $A$ if there is some $B$ such that there is no $a \vDash p$ with $a \ind^{\mathrm{GS}}_{A} B$. 

\begin{lem} \label{lem:nf implies GS}
    If $a \ind^{f}_{A} b$ or $b \ind^{f}_{A} a$, then $a \ind^{\mathrm{GS}}_{A} b$. 
\end{lem}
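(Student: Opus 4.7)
The plan is to prove the contrapositive: if $a \nind^{\mathrm{GS}}_{A} b$, then both $a \nind^{f}_{A} b$ and $b \nind^{f}_{A} a$. Unpacking the definition of $\ind^{\mathrm{GS}}$, the hypothesis $a \nind^{\mathrm{GS}}_{A} b$ furnishes a partial type $\pi(y)$ generically stable over $A$ such that $b \vDash \pi|_{A}$ but $b \not\vDash \pi|_{Aa}$. This means there is a formula $\phi(y; a', d) \in \pi$, with $a' \in Aa$ and $d \in A$, such that $\vDash \neg \phi(b; a', d)$. Since $d \in A$, we may absorb it and simply write $\phi(y; a') \in \pi$ with $a' \in Aa$ and $\vDash \neg \phi(b; a')$.

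Next, I apply the property (NF) of \cref{forking fact}, taking care with the role of variables: (NF) says that whenever a realization of $\pi|_{A}$ satisfies the negation of a formula in $\pi$, the parameters on each side fork over $A$. Applied to $\pi(y)$ with $b$ as the realization of $\pi|_{A}$ and $a'$ as the parameter tuple, this gives that both $\mathrm{tp}(b/Aa')$ and $\mathrm{tp}(a'/Ab)$ fork over $A$.

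Finally, I transfer these conclusions from $a'$ back to $a$. Since $a' \in Aa$, any $L(Aa')$-formula witnessing forking of $\mathrm{tp}(b/Aa')$ over $A$ is in particular an $L(Aa)$-formula in $\mathrm{tp}(b/Aa)$, so $\mathrm{tp}(b/Aa)$ forks over $A$; symmetrically, forking of $\mathrm{tp}(a'/Ab)$ over $A$ is inherited by the extension $\mathrm{tp}(a/Ab)$. Hence $b \nind^{f}_{A} a$ and $a \nind^{f}_{A} b$, which is the contrapositive of what we want.

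There is no real obstacle here beyond bookkeeping; the whole argument is just a careful application of (NF) after replacing $a$ by the sub-tuple $a'$ of parameters actually occurring in the offending formula. The only mild subtlety is making sure that when we pass from $a'$ to $a$, the direction of implication is the correct one for each of the two forking statements (restriction for $a'/Ab$, extension for $b/Aa'$).
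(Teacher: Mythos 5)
Your argument is correct and is exactly the paper's intended (one-line) proof, namely invoking property (NF) of \cref{forking fact}; you have simply written out the bookkeeping details of how to pass from the finite subtuple $a'$ of $Aa$ appearing in the offending formula back to $a$ itself, which the paper elides as ``immediate.''
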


\begin{proof}
   Immediate by \cref{forking fact}. 
\end{proof}

\begin{thm} \label{GS props}
The relation $\ind^{\mathrm{GS}}$ satisfies:

\begin{enumerate}
\item (invariance) If $A\ind^{\mathrm{GS}}_C B$ and $\sigma\in \mathrm{Aut}(\mathbb{M})$, then $\sigma(A)\ind^{\mathrm{GS}}_{\sigma(C)} \sigma(B)$.

\item (normality) If $A \ind^{\mathrm{GS}}_{C} B$, then $AC \ind^{\mathrm{GS}}_{C} BC$. 

\item (monotonicity) If $A\ind^{\mathrm{GS}}_C B$, $A'\subseteq A$, $B'\subseteq B$, then $A'\ind^{\mathrm{GS}}_C B'$.

\item (left and right existence) For all $A$ and $B$, $A \ind^{\mathrm{GS}}_{B} B$ and $A \ind^{\mathrm{GS}}_{A} B$.   

\item (right and left extension) If $A\ind^{\mathrm{GS}}_C B$ and $B'\supseteq B$, then there is $A' \equiv_{BC} A$ such that $A'\ind^{\mathrm{GS}}_C B'$. Similarly, if $A' \supseteq A$, then there is $B'\equiv_{AC} B$ such that $A'\ind^{\mathrm{GS}}_C B'$. 

\item (finite character) We have $A\ind^{\mathrm{GS}}_C B$ if and only if for all finite $A_0\subseteq A$ and $B_0\subseteq B$, we have $A_0\ind^{\mathrm{GS}}_C B_0$.

\item (left transitivity) If $C \subseteq B \subseteq A$, $B \ind^{\mathrm{GS}}_{C} D$, and $A \ind^{\mathrm{GS}}_{B} D$, then $A \ind^{\mathrm{GS}}_{C} D$. 

\item (local character on a club) For every finite tuple $a$ and for every set of parameters $B$, there is a club $\mathcal{C} \subseteq [B]^{\leq |T|}$ such that $a\ind^{\mathrm{GS}}_C B$ and  $a\ind^{\mathrm{GS}}_C B$ for all $C \in \mathcal{C}$.
\item (anti-reflexivity) We have $a\ind^{\mathrm{GS}}_C a$ if and only if $a\in \mathrm{acl}(C)$.

\item (algebraicity)\footnote{See also \cref{cor:alg base mon} to complete the picture.} If $a \ind^{\GS}_A b$ then $a \ind^{\GS}_A \acl(b)$ and $\acl(a) \ind^{\GS}_A b$. 

\end{enumerate}
\end{thm}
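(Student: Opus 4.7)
The plan is to verify each of the ten clauses directly from the definition, using the calculus of generically stable partial types established in Section~\ref{partialtypes}: amalgamation (Lemma~\ref{lem:gen_stable_amalgam}), maximality (Corollary~\ref{maximal extension}), existential projection (Proposition~\ref{prop:gen_stable_restriction} and Corollary~\ref{prop_gsquant}), and the Löwenheim--Skolem-style reduction (Fact~\ref{lem_reducepi}). Invariance, normality, monotonicity, existence, finite character, and left transitivity are essentially formal. For monotonicity, a GS partial type $\pi'$ over $C$ extending $\text{tp}(B'/C)$ is amalgamated with $\text{tp}(B/C)$ via Lemma~\ref{lem:gen_stable_amalgam} to produce a GS partial type over $C$ extending $\text{tp}(B/C)$ which is absorbed into the maximal one (Corollary~\ref{maximal extension}), and $B$ realizes it over $CA$ by hypothesis. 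Left transitivity uses that any GS partial type over $C \subseteq B$ is still GS over $B$, so by amalgamation with $\text{tp}(D/B)$ it is absorbed into the max GS partial type over $B$ extending $\text{tp}(D/B)$.

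For extension (5), given $A \ind^{\GS}_{C} B$ and $B' \supseteq B$, let $\pi'$ be the max GS partial type over $C$ extending $\text{tp}(B'/C)$. Proposition~\ref{prop:gen_stable_restriction} gives $(\exists y_{B'\setminus B})\pi' \subseteq \pi_{*}$, the max GS partial type extending $\text{tp}(B/C)$, so $\pi'(B, y_{B'\setminus B})|_{CA}$ is consistent by compactness; any realization $\tilde{b}$ extends $B$ to a copy of $B'$ over $C$ fixing $B$, and the inverse automorphism applied to $A$ yields $A'$. For anti-reflexivity (9), the direction $a \in \acl(C) \Rightarrow a \ind^{\GS}_{C} a$ uses that the max GS partial type extending the algebraic type $\text{tp}(a/C)$ is realized by each of the finitely many $C$-conjugates of $a$, including $a$ itself over $Ca$; the converse applies Lemma~\ref{equiv rel GS-forks} to the equality equivalence relation, which forces $x \neq c$ for every $c$ in this partial type.

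The two main obstacles are (8) local character on a club and (10) algebraicity. For (8), the plan is to use Fact~\ref{lem_reducepi} iteratively to reduce the max GS partial type over $B$ extending $\text{tp}(a/B)$ to one defined over a subset of size $\leq |T|$ capturing the ind-definability data for the countably many relevant formulas on the variables of $a$, then close under $\omega$-chains to form the required club. For algebraicity, the second half $\acl(a) \ind^{\GS}_{A} b$ is proved as follows: given $\phi(y_{b}; \bar{d}_{1}, \bar{d}_{2}) \in \pi$ with $\bar{d}_{1} \in A$, $\bar{d}_{2} \in \acl(a)$, extract from ind-definability of $\pi$ over $A$ an $A$-formula $\psi(z; \bar{e})$ whose solution set contains $\bar{d}_{2}$ and lies inside $\{d : \phi(y_{b}; \bar{d}_{1}, d) \in \pi\}$, and intersect with an algebraic formula $\Theta(z; \bar{a})$, $\bar{a} \in a$, witnessing $\bar{d}_{2} \in \acl(a)$; crucially, $\Theta \wedge \psi$ has the same finite solution set in every elementary extension, so the universally quantified formula $(\forall z)[\Theta(z; \bar{a}) \wedge \psi(z; \bar{e}) \to \phi(y_{b}; \bar{d}_{1}, z)]$ is genuinely forced by $\pi$, belongs to $\pi|_{Aa}$, is satisfied by $b$, and specializes at $z = \bar{d}_{2}$ to the desired instance. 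The first half $a \ind^{\GS}_{A} \acl(b)$ is the hardest step, because no analogous algebraic constraint is available on the side of the GS partial type itself: my plan is to apply Corollary~\ref{prop_gsquant} to $\pi_{*}^{b} \cup \text{tp}(b, c/A)$ projected to the $y_{c}$-variable, producing a GS partial type over $A$ extending $\text{tp}(c/A)$ whose realizations are forced into the algebraic orbit of $c$, and to combine the hypothesis $a \ind^{\GS}_{A} b$ with a pigeonhole/compactness argument over the finite orbit to pin down $c$ itself rather than a mere conjugate.
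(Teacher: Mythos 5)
Your clauses (1)--(7) and (9) are essentially fine: the treatment of invariance, normality, monotonicity, existence, finite character, left transitivity, and both directions of extension matches the paper's route (maximality, existential projection via Proposition~\ref{prop:gen_stable_restriction}, dummy variables / amalgamation), and your anti-reflexivity argument is a valid alternative. In the backward direction of (9), however, the assertion ``the max GS partial type extending the algebraic type $\text{tp}(a/C)$ is realized by each of the finitely many $C$-conjugates'' is stated without justification; it is true, but the clean way to see it is via the non-forking clause (NF) of Fact~\ref{forking fact} (if $\phi(x;b)\in\pi$ and $\vDash\neg\phi(a;b)$ then $\tp(a/Cb)$ forks over $C$, impossible for an algebraic type). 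The paper instead derives this direction formally from extension and monotonicity, which is shorter. Similarly, your direct finite-orbit argument for $\acl(a)\ind^{\GS}_{A} b$ in (10) is correct (the key observation that $\Theta\wedge\psi$ has the same solution set in all elementary extensions is exactly what is needed), but is far more elaborate than the paper's one-line reduction to left extension plus invariance.

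There are two genuine gaps. First, clause (8): your plan applies Fact~\ref{lem_reducepi} to the maximal GS partial type over $B$ extending $\tp(a/B)$. But this is the wrong object. Both $a\ind^{\GS}_C B$ and $B\ind^{\GS}_C a$ are universal statements quantifying over \emph{all} GS partial types over $C$ (of $B$ in the first case, of $a$ in the second), not a single one; reducing the base of one particular GS type $\Lambda$ over $B$ to a small $C\subseteq B$ produces only a single sub-type $\Lambda_*$ GS over $C$, and gives no control over an arbitrary GS type $\lambda$ over $C$ with $a\vDash\lambda|_C$: the consistency of $\lambda|_B\cup\tp(a/B)$ (equivalently, $\lambda\subseteq\Lambda$) is exactly what local character is supposed to establish, so the argument is circular. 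The paper avoids the whole quantifier by an orthogonal device: it constructs a club of $C\in[B]^{\leq|T|}$ for which $\tp(B/aC)$ is finitely satisfiable in $C$, then applies Lemma~\ref{lem:nf implies GS} (finite satisfiability $\Rightarrow$ non-forking $\Rightarrow$ GS-independence in both coordinates). You should abandon the reduction approach and use this.

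Second, the first half of (10), $a\ind^{\GS}_{A}\acl(b)$. Your plan via Corollary~\ref{prop_gsquant} projected to the $y_c$-variable cannot work as described: the realizations of the resulting GS type over $A$ need not lie in any finite orbit, since $c$ is only algebraic over $Ab$, not over $A$. More fundamentally, what must be shown is that $bc\vDash\pi_{bc}|_{Aa}$ where $\pi_{bc}$ is the maximal GS type over $A$ extending $\tp(bc/A)$, and even the projection $(\exists y_c)\pi_{bc}\subseteq\pi^{b}_{*}$ only yields some $c'$ (an $Ab$-conjugate of $c$) with $(b,c')\vDash\pi_{bc}|_{Aa}$; there is no obvious way to replace $c'$ by $c$ because $\pi_{bc}$ is only $A$-invariant, not $Ab$-invariant. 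The paper's proof sidesteps all of this: apply right extension to get $a'\equiv_{Ab} a$ with $a'\ind^{\GS}_{A}\acl(b)$; any $\sigma\in\mathrm{Aut}(\mathbb{M}/Ab)$ with $\sigma(a')=a$ fixes $A$ pointwise and $\acl(b)$ setwise, so invariance gives $a\ind^{\GS}_{A}\acl(b)$. You should use this extension-plus-invariance argument (and, symmetrically, left extension plus invariance gives the second half far more quickly than your finite-orbit calculation).
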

\begin{proof}
Invariance is clear from the definition.  The implication from $A \ind^{\mathrm{GS}}_{C} B$ to $AC \ind^{\mathrm{GS}}_{C} B$ is also clear from the definition, and the statement of normality follows from this by extension.  Monotonicity follows from the fact that adding dummy variables to a generically stable partial type preserves generic stability.

Existence (on both sides) follows directly from \cref{lem:nf implies GS} since clearly $A \ind^f_A B$ and $B \ind^f_B A$.

To prove right extension, assume that $A\ind^{\mathrm{GS}}_C B$ and let $B'= B\cup B''$. Let $\pi(x\hat{~}x'')$ be the unique maximal global partial type consistent with $\mathrm{tp}(BB''/C)$ which is generically stable over $C$. We need to show that $\mathrm{tp}(B/CA)\cup \pi|_{CA}$ is consistent. By \cref{prop:gen_stable_restriction}, the partial type $(\exists x'')\pi(x\hat{~}x'')$ is generically stable over $C$. It is therefore consistent with $\mathrm{tp}(B/CA)$ and the result follows, by \cref{maximal extension}. Left extension follows by definition: if $B \vDash \pi|_{AC}$ for $\pi$ generically stable over $C$, then $\pi|_{CA'} \cup \tp(B/AC)$ is consistent, so let $B'$ realize it.

Finite character on the left follows from the definition. To see finite character on the right, assume that we have $A \nind^{\mathrm{GS}}_C B$. Then there is a generically stable partial type $\pi(x)$ extending $\mathrm{tp}(B/C)$ and a formula $\phi(x)\in \pi|_{AC}$ such that $B\vDash \neg \phi$. The formula $\phi$ only involves a finite subset $B_0\subseteq B$. Write $B = B_0\cup B'$ and correspondingly split the variable $x = x_0\hat{~}x'$. By \cref{prop:gen_stable_restriction}, the partial type $\pi_0(x_0) := (\exists x')\pi(x_0 \hat{~} x')$ is generically stable over $C$. Then the formula $\phi(x)$ is a consequence of $\pi_0$ and we see that $B_0$ does not satisfy $\pi_0 |_{AC}$. Hence $A \nind^{\mathrm{GS}}_C B_0$.

Next, we consider left transitivity.  We will assume $a \ind^{\mathrm{GS}}_{Cb} d$ and $b \ind^{\mathrm{GS}}_{C} d$.  Let $\pi \supseteq \text{tp}(d/C)$ denote the maximal global partial type that is generically stable over $C$ and let $\tilde{\pi} \supseteq \text{tp}(d/Cb)$ denote the maximal global partial type that is generically stable over $Cb$. We want to show $d \vDash \pi|_{Cab}$, so pick $\phi(x;a,b) \in \pi$ and we will show that $\vDash \phi(d;a,b)$.  By our assumption that $b \ind^{\mathrm{GS}}_{C} d$, we know that $\text{tp}(d/Cb) \cup \pi$ is consistent.  It is also clearly generically stable over $Cb$, hence contained in $\tilde{\pi}$.  Thus, $\phi(x;a,b) \in \tilde{\pi}$ and the fact that $a \ind^{\mathrm{GS}}_{Cb} d$ entails that $\vDash \phi(d;a,b)$ as desired. 

We now prove local character on a club.  By \cref{lem:nf implies GS}, if $B \ind^f_{C} a$, then $a \ind^{\GS}_C B$ and $B \ind^{\GS}_C a$.  In particular this happens if $\text{tp}(B/aC)$ is finitely satisfiable in $C$. Therefore, it suffices to show that the set $\mathcal{C}$ defined by 
$$
\mathcal{C} = \{C \subseteq B : |C| \leq |T| \text{ and }\mathrm{tp}(B/aC)\text{ is finitely satisfiable in }C\},
$$
is a club of $[B]^{\leq |T|}$.  The set $\mathcal{C}$ is clearly closed under unions of chains of length $\leq |T|$, so we show it is unbounded.  Pick any $X \in [B]^{\leq |T|}$.  Inductively, we will build a sequence of sets $(C_{i})_{i < \omega}$ such that, for all $i < \omega$, we have the following: 
\begin{itemize}
\item $X \subseteq C_{i} \subseteq C_{i+1} \subseteq B$.
\item $|C_{i}| \leq |T|$.
\item If $\varphi(x;y) \in L(C_{i})$ and there is some $b \in B$ with $\vDash \varphi(b;a)$, then there is some $b' \in C_{i+1}$ with $\vDash \varphi(b';a)$.  
\end{itemize}
There is no problem in carrying out the induction:  we begin with $C_{0} = X$, and since $|C_{i}| \leq |T|$, there are only $|T|$ many formulas $\varphi(x;a)$ realized by some tuple in $B$ and we form $C_{i+1}$ by adding to $C_{i}$ one tuple from $B$ for each such formula. Then we put $C = \bigcup_{i} C_{i}$. By construction, $\text{tp}(B/Ca)$ is finitely satisfiable in $C$ and hence $X \subseteq C \in \mathcal{C}$.  


For anti-reflexivity, note that $\{x\neq b : b\in \mathbb{M}\}$ is a generically stable partial type, consistent with $\mathrm{tp}(a/C)$ if $a\notin \mathrm{acl}(C)$. Therefore $a\ind^{\mathrm{GS}}_C a$ implies that $a\in \mathrm{acl}(C)$. For the other direction, suppose $a \in \mathrm{acl}(C)$ and let $A$ be the finite set of realizations of $\mathrm{tp}(a/C)$.  By extension, there is $A' \equiv_{C} A$ such that $A' \ind^{\mathrm{GS}}_{C} A$ but, as a set, we must have $A = A'$ so $a \ind^{\mathrm{GS}}_{C} a$ follows by monotonicity.

Algebraicity: suppose that $a \ind^{GS}_A b$. The fact that $a \ind_A \acl(b)$ follows by right extension and invariance. Similarly, $\acl(a) \ind^{\GS}_A b$ follows from left extension and invariance.
\end{proof}

\begin{rem}
The form of local character in (5) was first isolated for Kim-independence in NSOP$_{1}$ theories in \cite{24-Kaplan2017}.  It, of course, implies the usual formulation of local character but is a more suitable analogue of the local character of non-forking independence in simple theories for contexts without base monotonicity.  Additionally, the proof of local character plus \cref{forking fact} imply local character on the left, since finite satisfiability implies non-forking.  That is, the proof establishes that for every finite tuple $a$ and set $B$, there is a club $\mathcal{C} \subseteq [B]^{\leq |T|}$ such that $B \ind^{\mathrm{GS}}_{C} a$ for all $C \in \mathcal{C}$. 
\end{rem}

Consider the following property:

\begin{description}
\item[(P)] If $\pi(x)$ is generically stable, then so is $\pi^{(\omega)}(x_0, x_1,\ldots)$.
\end{description}

\begin{prop} \label{P implies sym}
Assume that (P) holds, then $\ind^{\mathrm{GS}}$ satisfies symmetry: for any $A,a,b$ we have \[ a\ind^{\mathrm{GS}}_A b \iff b\ind^{\mathrm{GS}}_A a.\]
\end{prop}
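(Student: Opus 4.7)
The plan is to argue by contradiction using a joint Morley sequence. Writing $\pi$ for the maximal generically stable partial type over $A$ extending $\tp(b/A)$ (so $b \vDash \pi|_{Aa}$ by hypothesis) and $\lambda$ for the analogous maximal extension of $\tp(a/A)$, it suffices to show $a \vDash \lambda|_{Ab}$, since the converse direction follows by a symmetric argument. Suppose for contradiction there is $\phi(x;y) \in L(A)$ with $\phi(x;b) \in \lambda$ and $\vDash \neg \phi(a;b)$.

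The first step is to couple $\pi$ and $\lambda$ through the type of $(a,b)$. Since $\tp(a,b/A)$, viewed as a partial type whose formulas use only parameters in $A$, is trivially generically stable over $A$, iterating \cref{lem:gen_stable_amalgam} (after padding with dummy variables) gives that
\[
\mu(x,y) := \lambda(x) \cup \pi(y) \cup \tp(a,b/A)(x,y)
\]
is generically stable over $A$. By hypothesis (P), both $\mu^{(\omega)}$ and $\pi^{(\omega)}$ are generically stable over $A$.

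Next I would construct a Morley sequence $((a_k,b_k): k<\omega) \vDash \mu^{(\omega)}|_A$ with $(a_0, b_0) = (a,b)$. Generic stability of $\mu^{(\omega)}$ gives the type-definability of its set of realizations (via \cref{lem_def}), so a standard Ramsey-plus-compactness extraction produces a \emph{totally} $A$-indiscernible such sequence. Moreover, $(b_k)$ is itself a Morley sequence of $\pi$ over $Aa$ starting at $b$, since the Morley condition for $\mu$ gives $b_k \vDash \pi|_{A(a_{<k}, b_{<k})} \supseteq \pi|_{Aa\,b_{<k}}$ for $k \geq 1$ and $b_0 = b \vDash \pi|_{Aa}$; hence by (P) applied to $\pi$ over $Aa$, the sequence $(b_k)$ is $Aa$-indiscernible.

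The contradiction comes from a two-step swap. First, since $(a_1,b_1) \vDash \mu|_{A(a_0,b_0)}$, we have $a_1 \vDash \lambda|_{Aab}$, so $\vDash \phi(a_1;b_0)$. Total $A$-indiscernibility of the pair sequence, applied to the $L(A)$-formula $\phi(X_1;Y_0)$ in the pair variables $(X_0,Y_0),(X_1,Y_1)$, gives $\phi(a_1;b_0) \iff \phi(a_0;b_1)$, so $\vDash \phi(a;b_1)$. Second, $Aa$-indiscernibility of $(b_k)$ applied to the $L(Aa)$-formula $\phi(a;y)$ gives $\phi(a;b_1) \iff \phi(a;b_0)$, so $\vDash \phi(a;b)$, contradicting our assumption. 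The main obstacle — and the precise point where (P) is essential — is guaranteeing the \emph{total} $A$-indiscernibility of the joint Morley sequence; without generic stability of $\mu^{(\omega)}$, there is no ind-definable description of infinite Morley sequences to feed into the Ramsey extraction, so only ordinary $A$-indiscernibility rather than totality would be available.
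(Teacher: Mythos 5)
Your approach is genuinely different from the paper's: the paper picks $n$ maximal such that some $(a_1,\ldots,a_n)\vDash\pi^{(n)}$ satisfies $\bigwedge_{i\le n}\phi(a_i,b)$, forms the existentially-quantified type $\eta_n(y)$, uses (P) together with \cref{prop_gsquant} to see that $\eta_n$ is generically stable, and then contradicts maximality. You instead amalgamate $\lambda$, $\pi$ and $\tp(a,b/A)$ into a single type $\mu$, build a joint Morley sequence in $\mu$, and perform a two-step index swap. The amalgamation step is fine, but the argument has a genuine gap exactly where you flag it: the passage to a \emph{totally} $A$-indiscernible Morley sequence.

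Ramsey-plus-compactness gives an \emph{order}-indiscernible realization of $\mu^{(\omega)}|_A$, not a totally indiscernible one, and generic stability of $\mu^{(\omega)}$ does not upgrade this. Here is a concrete counterexample. Let $T=\mathrm{DLO}$, which is binary, hence treeless, so (P) holds by \cref{treeless implies P}. Take $A=\emptyset$ and $a<b$, so that $\lambda(x)$ and $\pi(y)$ are each generated by the nonalgebraicity type $\{x\neq c : c\in\mathbb{M}\}$, and
\[
\mu(x,y) \text{ is generated by } \{x\neq c,\; y\neq c : c\in\mathbb{M}\}\cup\{x<y\}.
\]
This $\mu$ is generically stable over $\emptyset$, and so is $\mu^{(\omega)}$. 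But the ``nested intervals'' sequence
\[
a_0 < a_1 < a_2 < \cdots < b_2 < b_1 < b_0
\]
is an $\emptyset$-indiscernible Morley sequence in $\mu$ over $\emptyset$ starting at $(a,b)$ which is \emph{not} totally $\emptyset$-indiscernible: $(a_0,b_0,a_1,b_1)$ satisfies $x_1<x_2<y_2<y_1$ while $(a_1,b_1,a_0,b_0)$ satisfies $x_2<x_1<y_1<y_2$. So the equivalence $\phi(a_1;b_0)\iff\phi(a_0;b_1)$, which is exactly the swap of indices $0$ and $1$, is not available, and the two-step swap collapses. (There is also a secondary unsupported step: you assert that $(b_k)$ is $Aa$-indiscernible ``by (P)'', but (P) only tells you that $(b_k)$ is a Morley sequence of $\pi$ over $Aa$; Morley sequences are not indiscernible without a further extraction, which you cannot perform without disturbing the other properties of the sequence.) The paper's maximality argument sidesteps all of this by never needing any indiscernibility of a Morley sequence, only the finiteness guaranteed by (GS).
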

\begin{proof}
Assume that $a\ind^{\mathrm{GS}}_A b$, but $b\nind^{\mathrm{GS}}_A a$. Let $\pi(x)$ be generically stable over $A$, consistent with $\mathrm{tp}(a/A)$, but not $\mathrm{tp}(a/Ab)$. Let $\phi(x,y)\in \mathrm{tp}(a,b/A)$ be such that $\neg\phi(x,b)\in \pi|Ab$. Let $n<\omega$ be maximal such that there is $(a_1,\ldots,a_n)\vDash \pi^{(n)}$ with $\bigwedge_{i\leq n} \phi(a_i,b)$. (Note that such an $n$ exists by generic stability and ind-definability.) Consider the partial type \[\eta_n(y) = \mathrm{tp}(b/A) \wedge (\exists (x_1,\ldots,x_n) \vDash \pi^{(n)}) \bigwedge_{i\leq n} \phi(x_i,y).\]
This type is generically stable by property (P) and \cref{prop_gsquant} and it is consistent with $\mathrm{tp}(b/A)$ by definition. As $a\ind^{\mathrm{GS}}_A b$, it is consistent with $\mathrm{tp}(b/Aa)$. But this means that we can find $a_1,\ldots,a_n\vDash \pi^{(n)}|_{Aa}$ with $\bigwedge_{i\leq n} \phi(a_i,b)$. But then $(a_0:=a,a_1,\ldots,a_n)\vDash \pi^{(n+1)}|_{A}$ and $\bigwedge_{i< n} \phi(a_i,b)$ holds. This contradicts the maximality of $n$.
\end{proof}

\begin{rem}
 In \cite[Example 2.12]{MR4049222}, there is an example which shows that property $P$ does not hold in general for generically stable partial types. 
\end{rem}


\begin{quest}\label{que:symmetry}
Is $\ind^{\mathrm{GS}}$ symmetric in general? Does it always satisfy transitivty on the right?
\end{quest}

\section{Treeless theories} \label{treeless section}

In this section, we define the treeless theories.  We begin by showing that \emph{treetop indiscernibles}, defined in the first subsection, have the modeling property.  Then we define treelessness in terms of a form of indiscernible collapse from the structure on the leaves of the treetop indiscernible to an indiscernible sequence.  

\subsection{Generalized indiscernibles and Ramsey classes}

In this subsection, we will define generalized indiscernibles and introduce a new kind of indiscernible tree, which allow us later on to define the treeless theories. 

\begin{defn}  Suppose $I$ is an $L'$-structure, where $L'$ is some language. 
\begin{enumerate}
\item  We say $(a_{i} : i \in I)$ is a set of $I$\emph{-indexed indiscernibles} if whenever 

$(s_{0}, \ldots, s_{n-1})$, $(t_{0}, \ldots, t_{n-1})$ are tuples from $I$ with 
$$
\text{qftp}_{L'}(s_{0}, \ldots, s_{n-1}) = \text{qftp}_{L'}(t_{0}, \ldots, t_{n-1}),
$$
then we have
$$
\text{tp}(a_{s_{0}},\ldots, a_{s_{n-1}}) = \text{tp}(a_{t_{0}},\ldots, a_{t_{n-1}}).
$$
\item We define the (generalized) \emph{EM-type} of $(a_{i})_{i \in I}$, written $\mathrm{EM}_{I}(a_{i} : i \in I)$, to be the partial type $\Gamma(x_{i} : i \in I)$ such that $\varphi(x_{i_{0}}, \ldots, x_{i_{n-1}}) \in \Gamma$ if and only if $\vDash \varphi(a_{j_{0}}, \ldots, a_{j_{n-1}})$ for all tuples $(j_{0}, \ldots, j_{n-1})$ from $I$ with $(j_{0}, \ldots, j_{n-1}) \vDash \mathrm{qftp}_{L'}(i_{0}, \ldots, i_{n-1})$. If $(b_{i} : i \in I) \vDash \mathrm{EM}_{I}(a_{i} : i \in I)$, we say $(b_{i} : i \in I)$ is \emph{locally based} on $(a_{i} : i \in I)$.  
\item We say that $I$-indexed indiscernibles have the \emph{modeling property} if, given any $(a_{i} : i \in I)$ from $\mathbb{M}$, there is an \(I\)-indexed indiscernible \((b_{i} : i \in I)\) in $\mathbb{M}$ locally based on $(a_{i} : i \in I)$. 
\end{enumerate}
\end{defn}

\begin{rem}
    When $I$-indexed indiscernibles have the modeling property and $J$ is an $L'$-structure with $\mathrm{Age}(I) = \mathrm{Age}(J)$, we additionally have that, given $(a_{i})_{i \in I}$, there is a $J$-indexed indiscernible $(b_{i})_{i \in J}$ locally based on $(a_{i})_{i \in I}$.  This follows easily by compactness, and we will often use the modeling property in this form. 
\end{rem}

For the remainder of the paper, except for the familiar case of indiscernible sequences, we will only ever consider $I$-indexed indiscernibles in the case where $I$ is a tree, though there are important differences between the notions of indiscernibility one obtains based on different choices of language for the tree $I$.  The language $L_{0}$ is the language consisting of two binary relations $\unlhd$ and $\leq_{lex}$, and a binary function $\wedge$.  The tree $\omega^{<\omega}$, for example, may be naturally viewed as an $L_{0}$-structure, where $\unlhd$ is interpreted the tree partial order, $\leq_{lex}$ as the lexicographic order, and $\wedge$ as the binary meet function.  If $I$ is an $L_{0}$-structure with $\mathrm{Age}(I) = \mathrm{Age}(\omega^{<\omega})$, then we refer to $I$-indexed indiscernibles as \emph{strongly indiscernible trees}.  

If $\alpha$ is an ordinal, we define a language $L_{s,\alpha}$ which consists of $L_{0}$, together with unary predicates $P_{\beta}$ for every $\beta < \alpha$. The tree $\omega^{<\alpha}$ can be viewed as an $L_{s,\alpha}$-structure by giving the symbols of $L_{0}$ their natural interpretation and interpreting each predicate $P_{\beta}$ as $\omega^{\beta}$, that is, as the set of nodes at level $\beta$ in the tree.  If $(a_{i})_{i \in I}$ is an $I$-indexed indiscernible for some $L_{s,\alpha}$-structure $I$ with $\mathrm{Age}(I) = \mathrm{Age}(\omega^{<\alpha})$ for some $\alpha$, then we refer to $(a_{i})_{i \in I}$ as an $s$\emph{-indiscernible tree}.

\begin{fact}\cite[Theorem 4.3]{KimKimScow} \cite[Theorem 16]{TakeuchiTsuboi} \label{modeling}
Let denote \(I_{s}\) be the \(L_{s,\omega}\)-structure \((\omega^{<\omega}, \unlhd, <_{lex}, \wedge, (P_{\alpha})_{\alpha < \omega})\) with all symbols being given their intended interpretations and each \(P_{\alpha}\) naming the elements of the tree at level \(\alpha\) and let $I_{0}$ denote its reduct to $L_{0} = \{ \unlhd, \leq_{lex}, \wedge \}$.  Then both $I_{0}$-indexed indiscernibles (strongly indiscernible trees) and \(I_{s}\)-indexed indiscernibles ($s$-indiscernible trees) have the modeling property. 
\end{fact}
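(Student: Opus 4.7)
The plan is to reduce the modeling property in each case to a Ramsey statement about the age of the index structure. The general principle, going back to Scow, is that for an $L'$-structure $I$ whose age has the joint embedding and Ramsey property (in an appropriate rigid form), $I$-indexed indiscernibles have the modeling property. I would first set out this equivalence explicitly: given $(a_i)_{i \in I}$ in $\mathbb{M}$, a finite set of formulas $\Delta$, and a finite substructure $A \subseteq I$, the map sending each $L'$-embedding $A \hookrightarrow I$ to the $\Delta$-type of the corresponding tuple from $(a_i)_{i \in I}$ is a finite coloring. Applying the Ramsey property of $\mathrm{Age}(I)$ produces a monochromatic copy of a sufficiently large substructure, and a standard compactness argument then assembles these local coherences into an $I$-indexed indiscernible $(b_i)_{i \in I}$ locally based on $(a_i)_{i \in I}$.

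With this reduction, the task becomes verifying that both $\mathrm{Age}(I_0)$ (finite trees in the language $L_0 = \{\unlhd, <_{lex}, \wedge\}$) and $\mathrm{Age}(I_s)$ (the same enriched with the level predicates $P_\beta$) are Ramsey classes. The engine for both is Milliken's tree theorem: for every finite coloring of the $n$-element strong subtrees of $\omega^{<\omega}$, there is an infinite strong subtree of $\omega^{<\omega}$, isomorphic to $\omega^{<\omega}$ as a strong subtree, on which the coloring is constant. I would then verify that a finite $L_0$-substructure of $\omega^{<\omega}$ is, up to isomorphism, the same data as the "shape" of a strong subtree: the partial order $\unlhd$, the lexicographic order $<_{lex}$, and the meet function $\wedge$ are all determined by, and determine, the underlying strong-subtree embedding. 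Hence an $L_0$-coloring of the copies of a finite $A$ can be viewed as a coloring of strong subtrees of the appropriate size, and Milliken's theorem delivers a monochromatic infinite strong subtree containing arbitrarily many $L_0$-isomorphic copies of $A$, which is the Ramsey property for $\mathrm{Age}(I_0)$.

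For the enriched language $L_{s,\omega}$, the unary predicates $P_\beta$ name the levels of the tree, and any finite substructure involves only finitely many of them. A coloring by $L_{s,\omega}$-types therefore refines an $L_0$-coloring by finitely much extra data recording the level of each vertex. Since strong subtrees of $\omega^{<\omega}$ respect the level structure up to an order-preserving shift of levels, the same monochromatic strong subtree produced by Milliken will be monochromatic for the enriched coloring as well (after passing to an appropriate sub-skeleton to align levels). This yields the Ramsey property for $\mathrm{Age}(I_s)$, and together with the reduction above, completes the proof of the modeling property in both cases.

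The main obstacle in a bare-hands approach is Milliken's theorem itself, whose combinatorial proof rests on the Halpern--L\"{a}uchli theorem and is genuinely nontrivial; I would invoke it rather than reprove it. The remaining technical point is the careful dictionary between the "strong subtree" formulation of Milliken and the $L_0$ or $L_{s,\omega}$ structure on finite subsets of $\omega^{<\omega}$, especially keeping track of how $\wedge$ and $<_{lex}$ on a strong subtree agree with their restrictions from the ambient tree. This bookkeeping is essentially the content of the Kim--Kim--Scow and Takeuchi--Tsuboi arguments cited.
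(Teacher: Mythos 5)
The paper does not prove this statement; it records it as a known fact, citing Kim--Kim--Scow and Takeuchi--Tsuboi, and the proofs in those papers are indeed along the lines you describe (Scow's transfer principle between the modeling property and the Ramsey property of the age, plus a Halpern--L\"auchli/Milliken partition theorem for trees). So your outline identifies the right ingredients and the right reduction, and invoking Milliken as a black box is entirely appropriate.

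Two points in your ``dictionary'' step are loose enough that they would have to be repaired before this counts as a proof. First, a finite $L_{0}$-substructure of $\omega^{<\omega}$ (a $\wedge$-closed subset) is \emph{not} in general the shape of a strong subtree: in a strong subtree every non-terminal node branches with the same arity into the next level set, while $\wedge$-closed subsets allow unary chains and unbalanced branching (e.g.\ the $\wedge$-closure of $\{\langle 0,0\rangle, \langle 1,0\rangle\}$ has a non-branching node). The actual argument passes from a finite $\wedge$-tree $A$ to a strong-subtree \emph{envelope} of $A$, colors envelopes rather than $A$-copies directly, and then accounts for the finitely many ways $A$ can sit inside an envelope; this is exactly the nontrivial bookkeeping you gesture at. Second, for $I_{s}$ the predicates $P_{\alpha}$ name \emph{absolute} levels, whereas Milliken produces a monochromatic strong subtree whose levels are only an unbounded subset of $\omega$; ``passing to a sub-skeleton to align levels'' does not make the ambient $P_{\alpha}$'s match. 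The fix is the compactness step: one only needs, for each finite $\Delta$ and finite level-labeled $\wedge$-tree, a realization of the desired $\Delta$-type pattern, and one re-indexes the monochromatic strong subtree by $\omega^{<\omega}$ with its standard level labeling before applying compactness. Your proposal acknowledges that the bookkeeping ``is essentially the content'' of the cited arguments, which is fair, but as written those two sentences assert identifications that do not literally hold, and the proof only goes through once one works with envelopes and re-indexing rather than with the na\"ive correspondence.
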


\begin{rem} \label{two index models}
Trees of height greater than $\omega$ may also be considered as $s$-indiscernible trees, though this requires adding additional predicates to the language on the index model:  we say, for example, that $(a_{\eta})_{\eta \in \omega^{<\beta}}$ is an $s$-indiscernible tree if it is an $\omega^{<\beta}$-indexed indiscernible where $\omega^{<\beta}$ is considered as a structure in the language $L_{s,\beta}$ which contains predicates $(P_{\alpha})_{\alpha < \beta}$ for all $\beta$ levels of the tree.  As the language on the index model of an $s$-indiscernible tree is typically clear from context, we will not specify it explicitly.  
\end{rem}

We will use the phrase \emph{Fra\"iss\'e class} to denote a uniformly locally finite class of finite structures satisfying the hereditary property, the joint embedding property, and the amalgamation property. Given any $L$-structures $A,B$, we write $\mathrm{Emb}_{L}(A,B)$ to denote the set of embeddings from $A$ to $B$.  We omit the $L$ subscript when it is understood from context.

Recall that a Fra\"iss\'e class $\mathcal{K}$ is has the \emph{Ramsey property} if, given any $A \subseteq B$ and $r \in \omega$, there is some $C \in \mathcal{K}$ such that, if $\chi : \mathrm{Emb}(A,C) \to r$, there is some $\alpha \in \mathrm{Emb}(B,C)$ such that $\chi|_{\alpha \circ \mathrm{Emb}(A,B)}$ is constant, where 
$$
\alpha \circ \mathrm{Emb}(A,B) = \{\alpha \circ \beta : \beta \in \mathrm{Emb}(A,B)\}. 
$$
A Fra\"iss\'e class satisfying the Ramsey property is called a \emph{Ramsey class}.  

There is a tight connection between Ramsey classes and generalized indiscernibles with the modeling property, established by the following theorem of Scow:

\begin{fact} \cite[Theorem 3.12]{SCOW} \label{scows theorem}
Suppose $I$ is an infinite, locally finite structure expanding a linear order in the language $L'$, such that quantifier-free types are isolated by quantifier-free formulas. Then $I$-indexed indiscernibles have the modeling property if and only if $\mathrm{Age}(I)$ is a Ramsey class. 
\end{fact}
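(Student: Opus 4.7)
The proof splits into two directions, and my plan is to reduce each to an extraction argument: the Ramsey property powers one direction and the modeling property powers the other.

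For the direction from Ramsey to modeling, I would proceed by a compactness argument. Given $(a_{i})_{i \in I}$ in $\mathbb{M}$, the partial type I want to realize consists of every formula in $\mathrm{EM}_{I}(a_{i} : i \in I)$ together with the indiscernibility schema asserting that for any two index tuples of the same $L'$-qftp, the corresponding $x$-tuples satisfy the same $L$-formulas. By compactness, it suffices to satisfy any finite fragment, which involves only finitely many indices from some finite $F \subseteq I$ and finitely many formulas $\Delta \subseteq L$. The task reduces to finding an $L'$-embedding $\sigma : F \to I$ such that, for every substructure type occurring in $F$, the $\Delta$-type of the subtuple $(a_{\sigma(f)})_{f \in F'}$ depends only on $\mathrm{qftp}_{L'}(F')$. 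Since $F$ contains only finitely many substructure isomorphism types and $\Delta$-types of a given arity are finite in number, I would iteratively apply the Ramsey property of $\mathrm{Age}(I)$: for each substructure type $A$ occurring in $F$, pick $C \in \mathrm{Age}(I)$ witnessing Ramsey for $A \subseteq F$ (with current color count), embed $C$ into $I$, color $\mathrm{Emb}(A, C)$ by the $\Delta$-type of the associated $(a_{i})$-subtuple, and pass to a monochromatic copy of the current frame inside $C$. Iterating through the finitely many substructure types yields the required $\sigma$, after which $\Delta$-indiscernibility and local-basedness of $(a_{\sigma(f)})_{f \in F}$ on $(a_{i})_{i \in I}$ are both immediate.

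For the converse, suppose Ramsey fails for some $A, B \in \mathrm{Age}(I)$ and some $r < \omega$. A compactness argument in the space $r^{\mathrm{Emb}(A, I)}$ of colorings, starting from bad colorings on finite substructures of $I$, produces a global coloring $\chi : \mathrm{Emb}(A, I) \to r$ admitting no $\alpha \in \mathrm{Emb}(B, I)$ with $\chi|_{\alpha \circ \mathrm{Emb}(A,B)}$ constant. I would then encode $\chi$ via a language expansion $L^{*} := L' \cup \{P_{0}, \ldots, P_{r-1}\}$, with each $P_{j}$ of arity $|A|$, producing an $L^{*}$-structure $I^{*}$ on $I$ in which $P_{j}(\bar{\imath})$ holds exactly when $\bar{\imath}$ enumerates an embedding $A \to I$ of $\chi$-color $j$. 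Working inside a monster model $\mathbb{M}$ of $\mathrm{Th}(I^{*})$ in the language $L^{*}$, with $a_{i} := i$, the modeling property furnishes an $I$-indexed indiscernible $(b_{i})_{i \in I}$ in $L^{*}$ locally based on $(a_{i})$. The sentence asserting that exactly one $P_{j}$ holds on each $|A|$-tuple of the qftp of $A$ is first-order (here the hypothesis that quantifier-free types are isolated by quantifier-free formulas is used), holds in $I^{*}$, and hence in $\mathbb{M}$; indiscernibility therefore forces a single $c < r$ such that every $(b_{i})$-subtuple of this qftp satisfies $P_{c}$. Consequently, for every $\alpha : B \to I$, the tuple $(b_{\alpha(b)})_{b \in B}$ satisfies
\[
 \varphi_{c}^{B}(\bar{x}) \;:=\; \bigwedge_{\beta \in \mathrm{Emb}(A, B)} P_{c}\bigl((x_{\beta(a)})_{a \in A}\bigr).
\]

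The main obstacle is that local-basedness formally transports information from $(a_{i})$ to $(b_{i})$, not the other way around. I would resolve this by running it contrapositively: since the indiscernible tuple $(b_{\alpha(b)})_{b \in B}$ satisfies $\varphi_{c}^{B}$, the formula $\neg \varphi_{c}^{B}$ cannot lie in $\mathrm{EM}_{I}(a_{i} : i \in I)$, so by the definition of the EM-type there must exist an index tuple of the same qftp as $\alpha(B)$ witnessing $\varphi_{c}^{B}$ under the $(a_{i})$-assignment. Unpacking the meaning of $P_{c}$ in $I^{*}$, this yields an $\alpha' \in \mathrm{Emb}(B, I)$ that is $c$-monochromatic under $\chi$, contradicting badness. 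The delicate point is arranging the expansion so that completeness of $\mathrm{Th}(I^{*})$ enforces enough first-order rigidity on the $P_{j}$'s in $\mathbb{M}$ for indiscernibility to collapse the $(b_{i})$-coloring to a single $c$, and then ensuring that the formula $\varphi_{c}^{B}$ obtained from this collapse is expressible at the level of EM-type so that the contrapositive step goes through.
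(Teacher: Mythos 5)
The paper does not prove this statement---it is cited as a Fact from Scow's paper \cite[Theorem 3.12]{SCOW}, so there is no in-text proof to compare against. I will therefore assess your reconstruction on its own merits.

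Your ``modeling implies Ramsey'' direction is correct and follows the standard route: extract a global bad coloring $\chi$ by Tychonoff compactness in $r^{\mathrm{Emb}(A,I)}$, encode it via new predicates $P_0,\dots,P_{r-1}$ of arity $|A|$, pass to a monster model of $\mathrm{Th}(I^*)$, use the modeling property to collapse the coloring to a single color $c$ on the $b$-side, and then invoke the definition of the EM-type contrapositively to pull a $c$-monochromatic copy of $B$ back to the $a$-side. The hypothesis that qf-types are isolated is used exactly where you say, to make ``exactly one $P_j$ holds on every tuple of the qftp of $A$'' a first-order sentence of $\mathrm{Th}(I^*)$; and the ``delicate point'' you flag at the end is in fact unproblematic, for the reason you give. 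Local finiteness of $I$ is what guarantees every embedding of $A$ sits inside a finite substructure, which is what makes the FIP argument for the existence of $\chi$ go through.

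There is, however, a genuine gap in the ``Ramsey implies modeling'' direction: the iterated Ramsey step as you describe it does not work. You write that for each substructure isomorphism type $A$ occurring in $F$ you pick $C$ witnessing Ramsey for $A \subseteq F$, embed $C$ in $I$, color $\mathrm{Emb}(A,C)$ by $\Delta$-types, and pass to a monochromatic copy of ``the current frame.'' But the Ramsey witness for $(A_1,F)$ only guarantees a monochromatic copy of $F$ for $A_1$; when you then try to handle $A_2$ you would need a monochromatic copy of $F$ for $A_2$ \emph{inside} that first copy of $F$, and since copies of $F$ do not properly nest, there is no room---you would have to re-embed, destroying the $A_1$-homogeneity you just secured. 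The correct version of the argument pre-builds a \emph{nested} chain of Ramsey witnesses in the abstract: set $C_0 = F$, then choose $C_j$ witnessing Ramsey for $(A_j, C_{j-1})$ with the number of $\Delta$-types of arity $|A_j|$ as the color count, up to $C_k$. Embed $C_k$ into $I$, color by $\Delta$-types, extract a monochromatic $C_{k-1}$-copy for $A_k$, then a monochromatic $C_{k-2}$-copy inside that for $A_{k-1}$, and so on down to a copy of $F$; because the copies nest, the homogeneity obtained at each stage is preserved at all later stages. With that fix, the compactness argument you sketch goes through, since the final embedding $\sigma: F \to I$ is qftp-preserving, so $(a_{\sigma(i)})_{i\in F}$ satisfies the finite fragment of both the EM-type and the $\Delta$-indiscernibility schema.
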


The language $L_{0,P} = \{\unlhd, \wedge, <_{lex}, P\}$ where $P$ is a unary predicate.  The class $\mathbb{K}_{0,P}$ consists of all finite $\wedge$-trees $A$ in which every element of $P^{A}$ is a leaf\textemdash that is, each $A \in \mathbb{K}_{0,P}$ satisfies the axiom
$$
(\forall \eta \in P)(\forall \nu)[\neg (\eta \vartriangleleft \nu)].  
$$
Note that if $\omega^{\leq \omega}$ is viewed as an $L_{0,P}$ structure in which $\wedge, \unlhd$, and $<_{lex}$ receives their natural interpretations and $P$ is interpreted as $\omega^{\omega}$, then $\mathrm{Age}(\omega^{\leq \omega}) = \mathbb{K}_{0,P}$. 

\begin{defn}
We define a \emph{treetop indiscernible} to be any $I$-indexed indiscernible where $I$ is an $L_{0,P}$-structure with $\mathrm{Age}(I) = \mathbb{K}_{0,P}$.  
\end{defn}

We aim to show that treetop indiscernibles have the modeling property or, equivalently, that $\mathbb{K}_{0,P}$ is a Ramsey class.  In the arguments below, it will be useful to introduce the following notation:  if $I$ is an $L_{0,P}$-structure with $\mathrm{Age}(I) = \mathbb{K}_{0,P}$, we will write $I_{+}$ for $P(I)$, and we will write $I_{-}$ for $I \setminus P(I)$.  In other words, $I_{+}$ names the leaves of the tree $I$ and $I_{-}$ names the non-leaves. 

Recall that the tree $\omega^{\leq \omega}$ may be viewed as an index model for $s$\emph{-indiscernible trees}, in which case this tree is viewed as a structure in the language $L_{s,\omega+1} = \{\wedge, \unlhd, \leq_{lex}, (P_{\alpha})_{\alpha \leq \omega}\}$, where $P_{\alpha}$ is interpreted as the $\alpha$th level of the tree.  We may regard the $L_{0,P}$-structure on $\omega^{\leq \omega}$ as a reduct of its $L_{s,\omega+1}$-structure, identifying $P$ with $P_{\omega}$.  

\begin{lem} \label{same type}
Suppose $\overline{\eta}, \overline{\nu}$ are $\wedge$-closed tuples from $\omega^{\leq \omega}$ and we write
\begin{eqnarray*}
\overline{\eta} &=& (\overline{\eta}_{-}, \overline{\eta}_{+}) \\
\overline{\nu} &=& (\overline{\nu}_{-}, \overline{\nu}_{+}),
\end{eqnarray*}
such that $\overline{\eta}_{-},\overline{\nu}_{-}$ are tuples from $\omega^{<\omega}$ and $\overline{\eta}_{+}, \overline{\nu}_{+}$ are from $\omega^{\omega}$.  Then if $\overline{\eta}_{-} = \overline{\nu}_{-}$ and $\mathrm{qftp}_{L_{0,P}}(\overline{\eta}) = \mathrm{qftp}_{L_{0,P}}(\overline{\nu})$, then we have $\mathrm{qftp}_{L_{s,\omega+1}}(\overline{\eta}) = \mathrm{qftp}_{L_{s,\omega+1}}(\overline{\nu})$.  
\end{lem}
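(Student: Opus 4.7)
The plan is to check that every atomic $L_{s,\omega+1}$-formula on the tuple $\overline{\eta}$ has the same truth value as on $\overline{\nu}$, by case analysis. The observation that makes this tractable is that $L_{s,\omega+1}$ extends $L_{0,P}$ by only the additional unary level predicates $(P_\alpha)_{\alpha<\omega}$: the predicate $P_\omega$ agrees with $P$ under the intended interpretations, and the difference between $<_{lex}$ and $\leq_{lex}$ is absorbed by equality. So any atomic $L_{s,\omega+1}$-formula is either (i) an atomic $L_{0,P}$-formula, or (ii) of the form $P_\alpha(t(\overline{x}))$ with $\alpha<\omega$ and $t$ a $\wedge$-term in the variables.

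Case (i) is immediate from the hypothesis that $\mathrm{qftp}_{L_{0,P}}(\overline{\eta}) = \mathrm{qftp}_{L_{0,P}}(\overline{\nu})$. For case (ii), I would use $\wedge$-closure: since $\overline{\eta}$ is $\wedge$-closed, the value $t(\overline{\eta})$ equals some coordinate $\eta_i$ of the tuple, and since $\wedge$ and $=$ are in $L_{0,P}$, the $L_{0,P}$-qftp-preservation gives the matching equation $t(\overline{\nu}) = \nu_i$. It then suffices to compare $P_\alpha(\eta_i)$ with $P_\alpha(\nu_i)$. The coordinate $i$ lies either in the non-leaf part $\overline{\eta}_-$ or in the leaf part $\overline{\eta}_+$; because $P \in L_{0,P}$, the qftp hypothesis ensures that the index $i$ lies in the same part for $\overline{\nu}$. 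In the first subcase, $\eta_i = \nu_i$ by the hypothesis $\overline{\eta}_- = \overline{\nu}_-$, and the equivalence is trivial. In the second subcase, $\eta_i,\nu_i \in \omega^\omega$ are both at level $\omega$, so $P_\alpha(\eta_i)$ and $P_\alpha(\nu_i)$ are both false for every $\alpha<\omega$.

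There is no genuine obstacle here; the argument is essentially bookkeeping. The only point that requires minor care is to reduce an arbitrary $\wedge$-term $t$ applied to the tuple to a single coordinate, which follows from iterated $\wedge$-closure, and to use the $L_{0,P}$-qftp to transfer this reduction to $\overline{\nu}$ on the nose.
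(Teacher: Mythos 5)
Your proposal is correct and follows essentially the same route as the paper: use $\wedge$-closure together with the $L_{0,P}$-qftp hypothesis to reduce any $\wedge$-term to a coordinate, and then check the level predicates $P_\alpha$ ($\alpha \le \omega$) coordinatewise, with $\alpha < \omega$ handled by $\overline{\eta}_- = \overline{\nu}_-$ on the non-leaf part and by vacuous falsity on the leaf part, and $P_\omega$ coinciding with $P$. You are a bit more explicit than the paper about the reduction to atomic formulas and the $<_{lex}$ versus $\leq_{lex}$ bookkeeping, but the underlying argument is the same.
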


\begin{proof}
Since $\overline{\eta}$ and $\overline{\nu}$ are $\wedge$-closed and $\mathrm{qftp}_{L_{0,P}}(\overline{\eta}) = \mathrm{qftp}_{L_{0,P}}(\overline{\nu})$, it is enough to show that the map $\overline{\eta} \mapsto \overline{\nu}$ preserves every predicate of the form $P_{i}$ for $i \leq \omega$.  But this mapping takes $\overline{\eta}_{-}$ to $\overline{\nu}_{-}$ so preserves $P_{i}$ for every $i < \omega$.  The mapping also takes $\overline{\eta}_{+}$ to $\overline{\nu}_{+}$, so preserves $P_{\omega}$ as well.  
\end{proof}

We will argue that $\mathrm{Age}_{L_{0,P}}(\omega^{\leq \omega})$ is a Ramsey class.  In order to do this, it suffices, by \cref{scows theorem}, to show the following:

\begin{lem} \label{coloring lemma}
Given any $(a_{\eta})_{\eta \in \omega^{\leq \omega}}$, there is some $(b_{\eta})_{\eta \in \omega^{\leq \omega}}$ which is treetop indiscernible and locally based on $(a_{\eta})_{\eta \in \omega^{\leq \omega}}$.  
\end{lem}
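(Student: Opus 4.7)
Plan:

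My plan is to extract, using the modeling property for $s$-indiscernible trees, an $s$-indiscernible sequence locally based on the given $(a_\eta)$, and then upgrade this to a treetop indiscernible by invoking \cref{same type}. The decomposition in \cref{same type} into non-leaf and leaf parts is the hint that this is how \cref{same type} enters the proof: once the non-leaf parts of two tuples are literally equal, matching $L_{0,P}$-qftp is enough to force matching $L_{s,\omega+1}$-qftp, which is what $s$-indiscernibility is sensitive to.

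First, I would apply \cref{modeling} in the strengthened form that allows index trees of height greater than $\omega$ (as noted in \cref{two index models}, this follows from the countable-height version by a standard compactness argument over finite initial segments of $\omega^{\leq \omega}$). This yields $(b_\eta)_{\eta \in \omega^{\leq \omega}}$ which is $s$-indiscernible with respect to $L_{s,\omega+1}$ and is locally based on $(a_\eta)$.

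Next, to verify that $(b_\eta)$ is treetop indiscernible, I would take $\wedge$-closed tuples $\overline{\eta}, \overline{\nu}$ with $\mathrm{qftp}_{L_{0,P}}(\overline{\eta}) = \mathrm{qftp}_{L_{0,P}}(\overline{\nu})$ and decompose as $\overline{\eta} = (\overline{\eta}_{-}, \overline{\eta}_{+})$ and $\overline{\nu} = (\overline{\nu}_{-}, \overline{\nu}_{+})$. In the special case $\overline{\eta}_{-} = \overline{\nu}_{-}$, \cref{same type} upgrades $L_{0,P}$-qftp equality to $L_{s,\omega+1}$-qftp equality, and $s$-indiscernibility of $(b_\eta)$ immediately yields $\tp(b_{\overline{\eta}}) = \tp(b_{\overline{\nu}})$.

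The main obstacle is reducing the general case to this special case: $s$-indiscernibility preserves level profiles in $\omega^{<\omega}$, whereas matching $L_{0,P}$-qftp allows $\overline{\eta}_{-}$ and $\overline{\nu}_{-}$ to sit at different finite levels. To bridge this gap, I expect to perform a coordinated second extraction alongside the first one, superimposing $L_0$-indiscernibility (i.e., strongly indiscernible tree structure, again via \cref{modeling}) on the restriction $(b_\eta)_{\eta \in \omega^{<\omega}}$, so that finite-level information on non-leaves becomes irrelevant while leaves remain at the single level $\omega$. Carrying out both extractions simultaneously by a single compactness argument, so as to preserve local basedness on $(a_\eta)$ while achieving enough indiscernibility to reduce every comparison to an instance of \cref{same type}, is the central technical challenge.
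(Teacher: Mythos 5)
Your setup is on target: extracting an $s$-indiscernible tree first, and then using \cref{same type} to upgrade $L_{0,P}$-qftp equality to $L_{s,\omega+1}$-qftp equality once the non-leaf parts coincide, is exactly how the paper proceeds. You have also correctly located the crux: after the $s$-indiscernible extraction, you still need to match tuples whose non-leaf parts occupy different finite levels of $\omega^{<\omega}$.

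The gap is that your proposed resolution of this crux is not actually an argument. A ``coordinated second extraction superimposing $L_0$-indiscernibility on the restriction $(b_\eta)_{\eta \in \omega^{<\omega}}$'' is not a well-defined operation: any extraction of a strongly indiscernible tree from $(b_\eta)_{\eta \in \omega^{<\omega}}$ permutes and replaces tuples based only on their $L_0$-type, and there is no mechanism for keeping the already-fixed leaf-indexed part $(b_\eta)_{\eta \in \omega^{\omega}}$ compatibly attached to the new non-leaf part. You acknowledge this yourself by calling it ``the central technical challenge,'' but that challenge is precisely the substance of the lemma, and it is not discharged. The paper does not perform a second extraction at all. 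Instead it reduces to consistency of a partial type $\Gamma$ (the $L_{0,P}$-EM type together with treetop indiscernibility) by compactness, fixes a \emph{finite} $\wedge$-closed configuration and a finite set $\Delta$ of formulas, defines colorings $c_i$ on copies of the relevant $L_{0,P}$-quantifier-free types by $\Delta$-types of the $a'$-tuples, and then observes --- using $s$-indiscernibility of $(a'_\eta)$ together with \cref{same type} --- that these colorings factor through the non-leaf projections, yielding well-defined colorings $c_{-,i}$ on copies inside $\omega^{<\omega}$. At that point the Ramsey property of $\mathrm{Age}_{L_0}(\omega^{<\omega})$ (from \cref{modeling}) produces a monochromatic finite subtree $C'_-$, which one then lifts to an $L_{0,P}$-copy $C' \subseteq \omega^{\leq\omega}$. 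This finite Ramsey step is what replaces your proposed ``second extraction,'' and without it (or an equivalent device) the proof does not go through. A secondary point: you invoke the modeling property for index trees of height $\omega+1$ as ``a standard compactness argument over finite initial segments,'' but this requires at least a sentence of care, since finite $L_{s,\omega+1}$-substructures of $\omega^{\leq\omega}$ can use the predicate $P_\omega$ and so are not literally among the finite $L_{s,\omega}$-substructures of $\omega^{<\omega}$; the paper leans on this too, so it is not a defect unique to your proposal, but it should not be waved away.
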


\begin{proof}
Let $(a'_{\eta})_{\eta \in \omega^{\leq \omega}}$ be an $s$-indiscernible tree locally based on $(a_{\eta})_{\eta \in \omega^{\leq \omega}}$.  

\begin{claim}\label{cla:reduction to s-ind}
It suffices to find $(b_{\eta})_{\eta \in \omega^{\leq \omega}}$ which is treetop indiscernible and locally based on $(a'_{\eta})_{\eta \in \omega^{\leq \omega}}$.
\end{claim}

\begin{proof}[Proof of Claim]
   Suppose $(b_{\eta})_{\eta \in \omega^{\leq \omega}}$ is treetop indiscernible and locally based on $(a'_{\eta})_{\eta \in \omega^{\leq \omega}}$.  Suppose further that $\overline{\eta}$ is a tuple from $\omega^{\leq \omega}$ and $\vDash \varphi(b_{\overline{\eta}})$.  By the local basedness of $(b_{\eta})_{\eta \in \omega^{\leq \omega}}$ as a treetop indiscernible, there is $\overline{\nu}$ in $\omega^{\leq \omega}$ with $\mathrm{qftp}_{L_{0,P}}(\overline{\eta}) = \mathrm{qftp}_{L_{0,P}}(\overline{\nu})$ and $\vDash \varphi(a'_{\overline{\nu}})$.  Then as $(a'_{\eta})_{\eta \in \omega^{\leq \omega}}$ is locally based on $(a_{\eta})_{\eta \in \omega^{\leq \omega}}$ as an $s$-indiscernible tree, there is $\overline{\xi}$ in $\omega^{\leq \omega}$ such that $\mathrm{qftp}_{L_{s,\omega+1}}(\overline{\nu}) = \mathrm{qftp}_{L_{s,\omega+1}}(\overline{\xi})$ and $\vDash \varphi(a_{\overline{\xi}})$.  It follows then that $\mathrm{qftp}_{L_{0,P}}(\overline{\eta}) = \mathrm{qftp}_{L_{0,P}}(\overline{\xi})$.  This shows $(b_{\eta})_{\eta \in \omega^{\leq \omega}}$ is locally based on $(a_{\eta})_{\eta \in \omega^{\leq \omega}}$.
\end{proof}

So now let $\mathrm{EM}_{L_{0,P}}((a'_{\eta})_{\eta \in \omega^{\leq \omega}})$ denote the partial type in the variables $(x_{\eta})_{\eta \in \omega^{\leq \omega}}$ consisting of the following set of formulas:
$$
\{\varphi(x_{\overline{\eta}}) : \mathbb{M} \vDash \varphi(a'_{\overline{\nu}}) \text{ for all }\overline{\nu} \vDash \mathrm{qftp}_{L_{0,P}}(\overline{\eta})\}.  
$$
Let $\Gamma$ denote the partial type consisting of $\mathrm{EM}_{L_{0,P}}((a'_{\eta})_{\eta \in \omega^{\leq \omega}})$ and the collection of formulas asserting that $(x_{\eta})_{\eta \in \omega^{\leq \omega}}$ is treetop indiscernible.  By \cref{cla:reduction to s-ind}, it suffices to show $\Gamma$ is consistent.  A finite subset of $\Gamma$ will be contained in 
$$
\mathrm{EMtp}_{L_{0,P}}((a'_{\eta})_{\eta \in \omega^{\leq \omega}})|_{x_{\overline{\xi}}} \cup \left\{ \mathrm{tp}_{\Delta}(x_{\overline{\eta}_{i}}) = \mathrm{tp}_{\Delta}(x_{\overline{\nu}_{i}}) : i < k\right\}
$$
for some finite $\Delta$, a finite tuple $\overline{\xi}$ from $\omega^{\leq \omega}$, and $\wedge$-closed tuples $\overline{\eta}_{i},\overline{\nu}_{i}$ with $\overline{\nu}_{i} \vDash \mathrm{qftp}_{L_{0,P}}(\overline{\eta}_{i})$ for all $i < k$.  Let $C$ be a finite $L_{0,P}$-substructure of $\omega^{\leq \omega}$ containing $\overline{\xi}$ and $\overline{\eta}_{i}, \overline{\nu}_{i}$ for all $i < k$ and so $C_{-}$ is the $L_{0}$-substructure of $\omega^{<\omega}$ consisting of the elements of $C \setminus P(C)$.  

For each $i < k$, let $q_{i} = \mathrm{qftp}_{L_{0,P}}(\overline{\eta}_{i})$ and define a coloring $c_{i} : q_{i}(\omega^{\leq \omega}) \to S^{l(\overline{\eta}_{i})}_{\Delta}(\emptyset)$ by 
$$
c_{i}(\overline{\zeta}) = \mathrm{tp}_{\Delta}(a'_{\overline{\zeta}})
$$
for all $\overline{\zeta} \in q_{i}(\omega^{\leq \omega})$.  Note that, since $\Delta$ is finite, we know $S^{l(\overline{\eta}_{i})}_{\Delta}(\emptyset)$ is finite.  

Let, for each $i < k$, $\overline{\eta}_{-,i}$ be the subtuple of $\overline{\eta}_{i}$ consisting of those elements not in $\omega^{\omega}$ and likewise for $\overline{\nu}_{-,i}$.  Let $q_{-,i} = \mathrm{qftp}_{L_{0}}(\overline{\eta}_{-,i}) = \mathrm{qftp}_{L_{0}}(\overline{\nu}_{-,i})$.  Then we define a coloring $c_{-,i} : q_{-,i}(\omega^{<\omega}) \to S^{l(\overline{\eta}_{i})}_{\Delta}(\emptyset)$ by setting, for each $\overline{\mu} \in q_{-,i}(\omega^{<\omega})$, 
$$
c_{-,i}(\overline{\mu}) = c_{i}(\overline{\zeta}) = \mathrm{tp}_{\Delta}(a'_{\overline{\zeta}})
$$
for any $\overline{\zeta} \in q_{i}(\omega^{\leq \omega})$ with $\overline{\zeta}_{-} = \overline{\mu}$.  By \cref{same type} and the $s$-indiscernibility of $(a'_{\eta})_{\eta \in \omega^{\leq \omega}}$, $c_{-,i}$ is well-defined.  As $\mathrm{Age}_{L_{0}}(\omega^{<\omega})$ is a Ramsey class, by \cref{modeling}, there is some $C_{-}' \cong C_{-}$, an $L_{0}$-substructure of $\omega^{<\omega}$, such that $c_{-,i}|_{q_{-,i}(C'_{-})}$ is constant for all $i < k$.  Choose any $C' \supseteq C'_{-}$, with $C'$ a substructure of $\omega^{\leq \omega}$ and $C'$ isomorphic to $C$ as an $L_{0,P}$-structure.  Then, unravelling definitions, we have that $c_{i}|_{q_{i}(C')}$ is constant for all $i < k$.  Letting $\overline{\xi}'$, $\overline{\eta}'_{i}$ and $\overline{\nu}'_{i}$ denote the corresponding tuples in $C'$, we have that $a'_{\overline{\xi'}}$, $(a'_{\overline{\eta}_{i}})_{i < k}$, and $(a'_{\overline{\nu}_{i}})_{i < k}$ realize the desired finite subset of $\Gamma$.  This concludes the proof.  
\end{proof}

\begin{cor} \label{Ramsey class cor}
$\mathbb{K}_{0,P}$ is a Ramsey class. 
\end{cor}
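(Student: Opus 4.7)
The plan is to deduce this Ramsey-theoretic statement directly from \cref{coloring lemma} by invoking Scow's theorem (\cref{scows theorem}) in the converse direction. The content of \cref{coloring lemma} is precisely the modeling property for the index model, so the corollary should amount to little more than verifying Scow's hypotheses.

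First I would fix the index model: view $\omega^{\leq \omega}$ as an $L_{0,P}$-structure with $\unlhd$, $\leq_{lex}$, and $\wedge$ given their natural interpretations, and with $P$ naming $\omega^{\omega}$. As was observed just before the definition of treetop indiscernibles, $\mathrm{Age}_{L_{0,P}}(\omega^{\leq \omega}) = \mathbb{K}_{0,P}$, and \cref{coloring lemma} states exactly that $\omega^{\leq \omega}$-indexed indiscernibles in this language have the modeling property.

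Next I would verify the hypotheses of \cref{scows theorem} for $I = \omega^{\leq \omega}$ in the language $L_{0,P}$. The structure is clearly infinite and expands the linear order $\leq_{lex}$. Local finiteness holds because the only function symbol is the binary meet $\wedge$: the $L_{0,P}$-substructure generated by a finite tuple is its closure under $\wedge$, which is still finite. Finally, quantifier-free types are isolated by quantifier-free formulas, since a finite tuple generates a finite substructure and its full quantifier-free type is pinned down by the (finite) conjunction of atomic and negated atomic formulas recording the diagram of that substructure.

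With these hypotheses verified, \cref{scows theorem} converts the modeling property from \cref{coloring lemma} into the Ramsey property for $\mathrm{Age}_{L_{0,P}}(\omega^{\leq \omega}) = \mathbb{K}_{0,P}$, giving the corollary. There is no real obstacle here: all the combinatorial work has already been absorbed into \cref{coloring lemma}, and this step is simply a transfer via Scow's theorem.
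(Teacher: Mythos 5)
Your argument matches the paper's proof exactly: the paper also cites \cref{coloring lemma} together with \cref{scows theorem} and declares the result immediate. Your extra sentences verifying local finiteness and isolation of quantifier-free types for $\omega^{\leq \omega}$ in $L_{0,P}$ are a reasonable spelling-out of what the paper leaves implicit, and they are correct.
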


\begin{proof}
Immediate by \cref{coloring lemma} and \cref{scows theorem}. 
\end{proof}

As $\mathbb{K}_{0,P}$ is a Ramsey class, it is, in particular, a Fra\"iss\'e class, by \cite[Theorem 2.13]{bodirsky2015ramsey}.  We denote the Fra\"iss\'e limit of $\mathbb{K}_{0,P}$ by $\mathcal{T}$.  This structure will play an important role in the definition of treeless theories in the subsection below. 

\subsection{Treeless theories}

Given an $L_{0,P}$-structure $I$ with $\mathrm{Age}(I) = \mathrm{Age}(\omega^{\leq \omega})$ and $\eta \in I$, let $C(\eta) = \{\nu \in P(I) : \eta \unlhd \nu\}$, i.e. the leaves of $I$ that are in the cone above $\eta$. 

\begin{defn}
Say that $T$ is \emph{treeless} if whenever $(a_{\eta})_{\eta \in \mathcal{T}}$ is treetop indiscernible and $\xi \in \mathcal{T}$, then $(a_{\eta})_{\eta \in C(\xi)}$ is an indiscernible sequence over $a_{\xi}$ (i.e. is order-indiscernible over $a_{\xi}$ with respect to $<_{lex}$). 
\end{defn}

\begin{prop}
The following are equivalent: 
\begin{enumerate}
\item $T$ is treeless. 
\item If $\mathcal{S}$ is any $L_{0,P}$-structure with $\mathrm{Age}(\mathcal{S}) = \mathbb{K}_{0,P}$ and $(a_{\eta} : \eta \in \mathcal{S})$ is treetop indiscernible, then for any $\eta \in \mathcal{S}$, $(a_{\eta} : \eta \in C(\eta))$ is order indiscernible over $a_{\eta}$. 
\item If $(a_{\eta} : \eta \in \omega^{\leq \omega})$ is treetop indiscernible, then $(a_{\eta} : \eta \in \omega^{\omega})$ is order indiscernible over $a_{\emptyset}$. 
\end{enumerate}
\end{prop}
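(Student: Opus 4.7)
The plan is to prove $(2) \Rightarrow (1) \Rightarrow (3) \Rightarrow (2)$. The implication $(2) \Rightarrow (1)$ is immediate, as $\mathcal{T}$ is an $L_{0,P}$-structure with $\mathrm{Age}(\mathcal{T}) = \mathbb{K}_{0,P}$. Both remaining implications rely on the modeling property (\cref{coloring lemma}) together with the following observation: if $(a_\eta)_{\eta \in I}$ and $(b_\eta)_{\eta \in J}$ are treetop indiscernibles with $\mathrm{Age}(I) = \mathrm{Age}(J) = \mathbb{K}_{0,P}$ and $(b_\eta)$ is locally based on $(a_\eta)$, then for any $\bar\eta$ in $I$ and $\bar\nu$ in $J$ with $\mathrm{qftp}_{L_{0,P}}(\bar\eta) = \mathrm{qftp}_{L_{0,P}}(\bar\nu)$, we have $\mathrm{tp}(a_{\bar\eta}) = \mathrm{tp}(b_{\bar\nu})$. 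Indeed, local basedness together with treetop indiscernibility of $(a_\eta)$ forces $\mathrm{tp}(b_{\bar\nu}) \supseteq \mathrm{tp}(a_{\bar\eta})$, and equality follows by completeness.

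For $(1) \Rightarrow (3)$: given $(a_\eta)_{\eta \in \omega^{\leq \omega}}$ treetop indiscernible, apply the modeling property to produce a treetop indiscernible $(b_\eta)_{\eta \in \mathcal{T}}$ locally based on $(a_\eta)$. Fix $<_{lex}$-increasing tuples $(\eta_i)_{i \leq n}$ and $(\eta'_i)_{i \leq n}$ in $\omega^\omega$; we wish to show their types over $a_\emptyset$ agree. Let $A$ be the finite $\wedge$-closed substructure of $\omega^{\leq \omega}$ generated by $\{\emptyset\} \cup \{\eta_i, \eta'_i\}$. As $A \in \mathbb{K}_{0,P}$, there is an embedding $f : A \hookrightarrow \mathcal{T}$ with $\xi := f(\emptyset)$ and $\nu_i := f(\eta_i), \nu'_i := f(\eta'_i) \in C(\xi)$ for all $i$. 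By (1), $(b_\nu)_{\nu \in C(\xi)}$ is order indiscernible over $b_\xi$, so $(b_{\nu_i})_i$ and $(b_{\nu'_i})_i$ have the same type over $b_\xi$. The EM-type observation then transfers this to the required equality $\mathrm{tp}(a_{\eta_1}, \ldots, a_{\eta_n}/a_\emptyset) = \mathrm{tp}(a_{\eta'_1}, \ldots, a_{\eta'_n}/a_\emptyset)$.

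For $(3) \Rightarrow (2)$: given $\mathcal{S}$ with $\mathrm{Age}(\mathcal{S}) = \mathbb{K}_{0,P}$, treetop indiscernible $(a_\eta)_{\eta \in \mathcal{S}}$, $\xi_0 \in \mathcal{S}$, and $<_{lex}$-increasing tuples $(\nu_i)_{i \leq n}, (\nu'_i)_{i \leq n}$ in $C(\xi_0)$, use the modeling property to obtain a treetop indiscernible $(c_\eta)_{\eta \in \omega^{\leq \omega}}$ locally based on $(a_\eta)$. Embed the $\wedge$-closed substructure generated by $\{\xi_0\} \cup \{\nu_i, \nu'_i\}$ into $\omega^{\leq \omega}$ via some $g$ and set $\xi'_0 := g(\xi_0)$, $\mu_i := g(\nu_i)$, $\mu'_i := g(\nu'_i)$. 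The subtree $I' := \{\eta \in \omega^{\leq \omega} : \xi'_0 \unlhd \eta\}$ is an $L_{0,P}$-substructure of $\omega^{\leq \omega}$ isomorphic to $\omega^{\leq \omega}$ (with $\xi'_0$ as new root), and the restriction of $(c_\eta)$ to $I'$ remains treetop indiscernible. Applying (3) to this restriction yields that $(c_\mu)_{\mu \in I' \cap \omega^\omega}$ is order indiscernible over $c_{\xi'_0}$, so $\mathrm{tp}(c_{\mu_1}, \ldots, c_{\mu_n}/c_{\xi'_0}) = \mathrm{tp}(c_{\mu'_1}, \ldots, c_{\mu'_n}/c_{\xi'_0})$. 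Transferring back through the shared EM-type gives the required type equality over $a_{\xi_0}$.

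The main subtlety in both transfers is arranging a single basepoint ($\xi$, respectively $\xi'_0$) that simultaneously serves both tuples; this is resolved by embedding the $\wedge$-closure of \emph{all} relevant elements together in a single step, using that this $\wedge$-closure lies in the common age $\mathbb{K}_{0,P}$.
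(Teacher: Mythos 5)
Your proof is correct, and the overall strategy matches the paper's: the cycle $(2)\Rightarrow(1)\Rightarrow(3)\Rightarrow(2)$ with the first implication trivial, and the remaining two handled by transferring EM-types across index models of the same age $\mathbb{K}_{0,P}$.

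One small point worth noting. In $(3)\Rightarrow(2)$ the paper works directly with the cone $\mathcal{S}_{\xi}=\{\eta\in\mathcal{S}:\xi\unlhd\eta\}$ and asserts $\mathrm{Age}(\mathcal{S}_{\xi})\supseteq\mathrm{Age}(\omega^{\leq\omega})$, then builds the $\omega^{\leq\omega}$-indexed treetop indiscernible by picking witnesses from $\mathcal{S}_{\xi}$ and applying compactness. That age claim can fail for a degenerate $\xi$ (for instance a non-leaf with a single leaf above it, which can occur in some $\mathcal{S}$ of the right age), although in exactly those cases $C(\xi)$ is a singleton and the conclusion is trivial. Your version sidesteps this entirely: you pass to $\omega^{\leq\omega}$ via the modeling property, embed only the finite $\wedge$-closure of $\{\xi_{0}\}\cup\bar\nu\cup\bar\nu'$, and then restrict to the cone above the image of $\xi_{0}$ in $\omega^{\leq\omega}$, where the ``isomorphic to $\omega^{\leq\omega}$'' claim is unconditional. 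This is a cleaner handling of the same idea rather than a genuinely different route; the key transfer lemma (treetop indiscernibles sharing an EM-type have equal types at tuples of matching quantifier-free type) is identical in substance to what the paper uses. Two very minor omissions: you never record the trivial case $\xi_{0}\in\mathcal{S}_{+}$ (where $C(\xi_{0})$ is a singleton), and the indices in your final display drift from $0,\ldots,n$ to $1,\ldots,n$; neither affects correctness.
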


\begin{proof}
The implication $(2)\implies (1)$ is trivial and $(1) \implies (3)$ is easy, using that $\mathrm{Age}(\omega^{\leq \omega}) = \mathrm{Age}(\mathcal{T})$, so we show $(3)\implies (2)$.  Assume (3) and suppose $\mathcal{S}$ is an $L_{0,P}$-structure with $\mathrm{Age}(\mathcal{S})=\mathbb{K}_{0,P}$, $(a_{\eta} : \eta \in \mathcal{S})$ is a treetop indiscernible, and $\xi \in \mathcal{S}_{-}$.  We must show $(a_{\eta} : \eta \in C(\xi))$ is order-indiscernible over $a_{\xi}$. Note that the $L_{0,P}$-substructure $S_{\xi}$ consisting of all $\eta \in \mathcal{S}$ with $\xi \unlhd \eta$ satisfies $\mathrm{Age}(S_{\xi}) \supseteq \mathrm{Age}(\omega^{\leq \omega})$.  Consequently, for each finite tuple $\overline{\eta}$ from $\omega^{\leq \omega}$, there is some $\overline{\nu}$ in $\mathcal{S}_{\xi}$ such that $\mathrm{qftp}_{L_{0,P}}(\overline{\eta}) = \mathrm{qftp}_{L_{0,P}}(\overline{\nu})$.  We define the type $p_{\overline{\eta}}(x_{\overline{\eta}})$ to be $\text{tp}(a_{\overline{\nu}})$ for some (equivalently, all) such $\overline{\nu}$.  Then, by compactness, $\Gamma(x_{\eta} : \eta \in \omega^{\leq \omega}) = \bigcup_{\overline{\eta}} p_{\overline{\eta}}$ is consistent, where $\overline{\eta}$ ranges over all finite tuples of $\omega^{\leq \omega}$.  Moreover, letting $(b_{\eta} : \eta \in \omega^{\leq \omega})$ be a realization, we have that $(b_{\eta} : \eta \in \omega^{\leq \omega})$ is treetop indiscernible.  By assumption, then, $(b_{\eta} : \eta \in \omega^{\omega})$ is order indiscernible over $b_{\emptyset}$.  By construction, this entails that $(a_{\eta} : \eta \in C(\xi))$ is order indiscernible over $a_{\xi}$.  As the case of $\xi \in \mathcal{S}_{+}$ is trivial, this completes the proof. 
\end{proof}

If $T$ is NIP, the definition of \emph{treeless} can be weakened to omit the condition that the leaves are order indiscernible \emph{over the root}:

\begin{prop} \label{no root}
Assume $T$ is NIP.  Suppose that for all treetop indiscernibles $(a_{\eta})_{\eta \in \omega^{\leq \omega}}$, the sequence $(a_{\eta})_{\eta \in \omega^{\omega}}$ is an indiscernible sequence. Then $T$ is treeless. 
\end{prop}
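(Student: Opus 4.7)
The plan is to argue by contradiction, using the failure of treelessness to produce an infinite alternation of a formula along an $\emptyset$-indiscernible sequence, thereby violating NIP. By the preceding proposition it suffices to show that for every treetop indiscernible $(a_\eta)_{\eta \in \omega^{\leq \omega}}$ the leaves $(a_\eta)_{\eta \in \omega^\omega}$ are indiscernible over $a_\emptyset$. Assume for contradiction this fails for some $(a_\eta)_{\eta \in \omega^{\leq \omega}}$. By the hypothesis the leaves form an $\emptyset$-indiscernible sequence, so the failure is witnessed by a formula $\phi(\bar x, y)$ of arity $k \geq 2$ and two lex-increasing $k$-tuples $\bar\eta^{(0)}, \bar\eta^{(1)}$ from $\omega^\omega$ with $\vDash \phi(a_{\bar\eta^{(0)}}, a_\emptyset) \wedge \neg \phi(a_{\bar\eta^{(1)}}, a_\emptyset)$.

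The key observation is that, by treetop indiscernibility applied to the enlarged tuple $(\emptyset, \bar\eta)$, the type $\mathrm{tp}(a_{\bar\eta}/a_\emptyset)$ for $\bar\eta \in (\omega^\omega)^k$ depends only on $\mathrm{qftp}_{L_{0,P}}(\emptyset, \bar\eta)$, i.e.\ on the meet pattern of $\bar\eta$ together with the root. Writing $q_j := \mathrm{qftp}_{L_{0,P}}(\emptyset, \bar\eta^{(j)})$ for $j = 0, 1$, we conclude that these two quantifier-free types are distinct.

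Next I would construct an infinite sequence of pairwise lex-disjoint lex-increasing $k$-tuples $(\bar\xi^i)_{i<\omega}$ from $\omega^\omega$, with $\bar\xi^i$ realizing $q_0$ for even $i$ and $q_1$ for odd $i$, arranged so that every entry of $\bar\xi^i$ lex-precedes every entry of $\bar\xi^{i+1}$. Concretely, fix disjoint blocks $B_i \subset \omega$ of $k$ consecutive first coordinates (say $B_i = \{ki, ki+1, \ldots, k(i+1)-1\}$) and realize $q_j$ using the leaves whose first coordinate lies in $B_i$; since the cone above any node of $\omega^{\leq \omega}$ is again isomorphic to $\omega^{\leq \omega}$, any meet pattern on $k$ leaves can be instantiated within the $i$-th block. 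Setting $e_i := a_{\bar\xi^i}$, the sequence $(e_i)_{i<\omega}$ is $\emptyset$-indiscernible, since any concatenation $(e_{i_0}, \ldots, e_{i_{m-1}})$ with $i_0 < \cdots < i_{m-1}$ is a lex-increasing $km$-tuple of leaves, and by the hypothesized $\emptyset$-indiscernibility of $(a_\eta)_{\eta \in \omega^\omega}$ its type is determined solely by its length. By treetop indiscernibility one further has $\vDash \phi(e_i, a_\emptyset)$ when $i$ is even and $\vDash \neg\phi(e_i, a_\emptyset)$ when $i$ is odd.

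This exhibits infinite alternation of $\phi(\bar z, y)$ along the $\emptyset$-indiscernible sequence $(e_i)_{i<\omega}$ with parameter $a_\emptyset$, directly contradicting NIP. The only step requiring real care is the combinatorial realization of both $q_0$ and $q_1$ in disjoint lex-blocks; while conceptually straightforward from the richness $\mathrm{Age}(\omega^{\leq \omega}) = \mathbb{K}_{0,P}$ established in \cref{Ramsey class cor}, it is slightly more notation-heavy for $k > 2$ since meet patterns can be arbitrarily complex, but the block-of-$k$-consecutive-first-coordinates construction handles every such pattern uniformly.
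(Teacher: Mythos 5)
Your proof is correct, but takes a genuinely different route from the paper's. The paper's argument is very short: it stretches the treetop indiscernible to $\kappa^{\leq\omega}$ with $\kappa=|T|^+$, invokes the standard NIP fact that a sufficiently long indiscernible sequence has an end segment which is indiscernible over any given finite tuple, and then uses treetop indiscernibility to propagate this $a_\emptyset$-indiscernibility from the end segment back to the whole sequence of leaves. You instead go back to first principles: assuming $q_0=\mathrm{qftp}_{L_{0,P}}(\emptyset,\bar\eta^{(0)})\neq q_1=\mathrm{qftp}_{L_{0,P}}(\emptyset,\bar\eta^{(1)})$ give different types over $a_\emptyset$, you embed alternating copies of $q_0$ and $q_1$ into lex-disjoint blocks of first coordinates, observe that the resulting $k$-tuples of leaves concatenate into lex-increasing tuples (so the associated sequence $(e_i)$ is indiscernible by the hypothesis that the leaves form an indiscernible sequence), and then the alternation of $\phi(\bar z; a_\emptyset)$ along $(e_i)$ contradicts NIP directly. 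The block realization is sound: the $\wedge$-closure of $(\emptyset,\bar\eta^{(j)})$ is a rooted meet-tree whose leaves partition into at most $k$ groups by first coordinate, and $k$ consecutive first coordinates plus the fact that each cone is isomorphic to $\omega^{\leq\omega}$ suffice to realize any such pattern. What each approach buys: the paper's version is shorter and modular (citing a known lemma), and it also requires no case analysis on the meet pattern; yours avoids stretching to an uncountable tree and is self-contained in the sense that it reduces to the basic alternation characterization of NIP rather than relying on the shrinking-indiscernibles lemma, at the cost of more delicate bookkeeping in the block construction.
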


\begin{proof}
Suppose $(a_{\eta})_{\eta \in \omega^{\leq \omega}}$ is treetop indiscernible.  We must show that $(a_{\eta})_{\eta \in \omega^{\omega}}$ is indiscernible over $a_{\emptyset}$.  By compactness, we may stretch the given treetop indiscernible to $(a_{\eta})_{\eta \in \kappa^{\leq \omega}}$ with $\kappa = |T|^{+}$.  Since $T$ is NIP, by \cite[Proposition 2.8]{simon2015guide}, there is an end segment $J \subseteq \kappa^{\omega}$ such that $(a_{\eta})_{\eta \in J}$ is $a_{\emptyset}$-indiscernible. By treetop indiscernibility, it follows that $(a_{\eta})_{\eta \in \kappa^{\omega}}$ is $a_{\emptyset}$-indiscernible as well.  Therefore $T$ is treeless. 
\end{proof}

\begin{quest}
Is \cref{no root} true without the assumption that $T$ is NIP? Note that weakened notion of treeless, in which the leaves indexed by $\omega^{\omega}$ in a treetop indiscernible $(a_{\eta})_{\eta \in \omega^{\leq \omega}}$ are only required to be an indiscernible sequence (not necessarily indiscernible over $a_{\emptyset}$) suffices for many of the observations.
\end{quest}

The following related question was suggested to us by Artem Chernikov:

\begin{quest}
    To check treelessness, does it suffice to consider triples of leaves?  More precisely, if whenever $(a_{\eta})_{\eta \in \mathcal{T}}$ is a treetop indiscernible and, for all $\eta_{0} <_{lex} \eta_{1} <_{lex} \eta_{2}$ and $\nu_{0} <_{lex} \nu_{1} <_{lex} \nu_{2}$ from $\mathcal{T}^{+}$, we have $(a_{\eta_{0}},a_{\eta_{1}}, a_{\eta_{2}}) \equiv_{a_{\emptyset}} (a_{\nu_{0}}, a_{\nu_{1}}, a_{\nu_{2}})$, does it follow that $T$ is treeless?  
\end{quest}

\begin{exmp}
Any structure homogeneous in a binary language.  Any theory of a pure linear order is (distal and) treeless, since it eliminates quantifiers in a binary language \cite[Lemma A.1]{simon2015guide}.
\end{exmp}

\begin{exmp}
The theory of any ordered abelian group is not treeless. To see this, let $G$ be any ordered abelian group. We may assume $G$ is $\aleph_{0}$-saturated and hence we can fix some $g > 0$ in $G$ which is $n$-divisible for all $n$ (take $g$ to be in the intersection of $n\cdot G$ for all $n<\omega$).  Fix $2 \leq n,m < \omega$.  Then for each $\eta \in n^{\leq m}$, as $g$ is $k$-divisible for all $k$, we can define 
$$
a_{\eta} = \sum_{i < m} \frac{\eta(i)}{n^{i}}g \in G.
$$
Consider some $\eta_{0} <_{lex} \eta_{1} <_{lex} \eta_{2} <_{lex} \eta_{3}$ in $n^{m}$ with
$$
(\eta_{0} \wedge \eta_{1}) \vartriangleright \eta_{1} \wedge (\eta_{0} \wedge \eta_{2})
$$
and 
$$
(\eta_{2} \wedge \eta_{3}) \vartriangleright (\eta_{0} \wedge \eta_{2})
$$
(and thus $(\eta_{0} \wedge \eta_{2}) = (\eta_{0} \wedge \eta_{3}) = (\eta_{1} \wedge \eta_{2}) = (\eta_{1} \wedge \eta_{3})$).  Then we have
$$
a_{\eta_{1}} - a_{\eta_{0}} < a_{\eta_{3}} - a_{\eta_{1}}
$$
and 
$$
a_{\eta_{2}} - a_{\eta_{0}} > a_{\eta_{3}} - a_{\eta_{2}}.  
$$
Hence, by compactness and \cref{Ramsey class cor}, we can find a treetop indiscernible $(b_{\eta})_{\eta \in \omega^{\leq \omega}}$ in a model of $\mathrm{Th}(G)$ satisfying the same pair of inequalities, which shows that $(b_{\eta})_{\eta \in \omega^{\omega}}$ is not an indiscernible sequence, hence $\mathrm{Th}(G)$ is not treeless. 
\end{exmp}

\begin{center}
\begin{tikzpicture}[level distance=1cm,                     level 1/.style={sibling distance=2cm},                     level 2/.style={sibling distance=1cm}]
  
  \draw[dotted] (-4,3.5) -- (4,3.5);
  \node[fill,circle,inner sep=1pt,label=above:{$0$}] at (-2.5,3.5) {};
  \node[fill,circle,inner sep=1pt,label=above:{$\eta_{0}$}] at (-1.5,3.5) {};
  \node[fill,circle,inner sep=1pt,label=above:{$\eta_{1}$}] at (-0.5,3.5) {};
  \node[fill,circle,inner sep=1pt,label=above:{$\eta_{2}$}] at (0.5,3.5) {};
  \node[fill,circle,inner sep=1pt,label=above:{$\eta_{3}$}] at (1.5,3.5) {};
  \node[fill,circle,inner sep=1pt,label=above:{$g$}] at (2.5,3.5) {};

  \node[circle,fill,inner sep=1pt] (root) {}[grow=up]
    child[solid] {node[circle,fill,inner sep=1pt] {}
      child[solid] {node[circle,fill,inner sep=1pt] {}
        child[dashed] {}
        child[dashed] {}
      }
      child[solid] {node[circle,fill,inner sep=1pt] {}
        child[dashed] {}
        child[dashed] {}
      }
    }
    child[solid] {node[circle,fill,inner sep=1pt] {}
      child[solid] {node[circle,fill,inner sep=1pt] {}
        child[dashed] {}
        child[dashed] {}
      }
      child[solid]  { node[circle,fill,inner sep=1pt] {}
            child[dashed] {}
            child[dashed] {}     
      }
    };
\end{tikzpicture}

\end{center}

\begin{rem}
Even if $T$ is treeless, it may be the case that $(a_{\eta})_{\eta \in \omega^{\leq \omega}}$ is $s$-indiscernible and $(a_{\eta})_{\eta \in \omega^{\omega}}$ is not an indiscernible sequence (this $(a_{\eta})_{\eta \in \omega^{\leq \omega}}$ will be necessarily not treetop indiscernible).  For example, let $T$ be the model companion of the theory in the language $L = \{R_{n} : n < \omega\}$ that says that the binary relation $R_{n}$ is a graph for each $n$.  So in $T$, each $R_n$ defines a random graph and these graphs interact totally independently.  We may choose vertices $(a_{\eta})_{\eta \in \omega^{\leq \omega}}$ so that, for leaves $\eta, \nu \in \omega^{\omega}$, $\vDash R_{n}(a_{\eta}, a_{\nu})$ holds if and only if the length of $\eta \wedge \nu$ is $n$.  This is preserved when passing to an s-indiscernible tree locally based on the $(a_{\eta})_{\eta \in \omega^{\leq \omega}}$, so we can assume $(a_{\eta})_{\eta \in \omega^{\leq \omega}}$ is $s$-indiscernible.  Clearly $(a_{\eta})_{\eta \in \omega^{\omega}}$ is not an indiscernible sequence.  However, $T$ eliminates quantifiers and the language $L$ is binary, so $T$ is treeless. 
\end{rem}

\begin{prop} \label{interpretation}
Suppose the theory $T'$ is interpretable in the treeless theory $T$.  Then $T'$ is treeless.  
\end{prop}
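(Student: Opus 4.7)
The plan is to lift the given treetop indiscernible from $T'$ into the monster model of $T$, apply the modeling property and treelessness of $T$ there, and then push the resulting indiscernibility of the leaves back through the interpretation. Let $(a'_{\eta})_{\eta \in \omega^{\leq \omega}}$ be treetop indiscernible in a monster model of $T'$, and fix an interpretation of $T'$ in $T$ given by a definable set $X$, a definable equivalence relation $E$ on $X$, and definable predicates; by absorbing any parameters of the interpretation into the sequence we build below, we may assume the interpretation itself is over $\emptyset$. Choose preimages $a_{\eta} \in X$ with $a'_{\eta} = [a_{\eta}]_{E}$.

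Next I would invoke the modeling property for treetop indiscernibles (\cref{coloring lemma}) applied to $(a_{\eta})_{\eta \in \omega^{\leq \omega}}$ to obtain a treetop indiscernible $(b_{\eta})_{\eta \in \omega^{\leq \omega}}$ in the monster of $T$ that is locally based on $(a_{\eta})$. Since $X$ and $E$ are $\emptyset$-definable, setting $b'_{\eta} = [b_{\eta}]_{E}$ gives a well-defined family in the monster of $T'$. Two routine verifications are required: first, $(b'_{\eta})$ is treetop indiscernible in $T'$, because any $L(T')$-formula pulls back to an $L(T)$-formula and treetop indiscernibility is a condition on $L_{0,P}$-qftps of indices, which is insensitive to the choice of language on the values; and second, $(b'_{\eta})$ is locally based on $(a'_{\eta})$, by the same formula pullback combined with the local basedness of $(b_{\eta})$ on $(a_{\eta})$.

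Now applying treelessness of $T$ to $(b_{\eta})$ yields that $(b_{\eta})_{\eta \in \omega^{\omega}}$ is indiscernible over $b_{\emptyset}$, which transfers via the interpretation to give that $(b'_{\eta})_{\eta \in \omega^{\omega}}$ is indiscernible over $b'_{\emptyset}$. To finish, I transfer this statement from $(b'_{\eta})$ to $(a'_{\eta})$: both families are treetop indiscernible in $T'$ and $(b'_{\eta})$ is locally based on $(a'_{\eta})$, which forces the two to share the same $\mathrm{EM}$-type (local basedness gives one inclusion, and treetop indiscernibility promotes it to equality of complete types on each $L_{0,P}$-qftp). Hence, for any $T'$-formula $\varphi'$ and any lex-increasing tuples $\overline{\eta}, \overline{\nu}$ from $\omega^{\omega}$, the truth value of $\varphi'(a'_{\overline{\eta}}, a'_{\emptyset})$ agrees with that of $\varphi'(b'_{\overline{\eta}}, b'_{\emptyset})$ by the shared EM-type, which agrees with $\varphi'(b'_{\overline{\nu}}, b'_{\emptyset})$ by $b'_{\emptyset}$-indiscernibility of the leaves, which in turn agrees with $\varphi'(a'_{\overline{\nu}}, a'_{\emptyset})$ by the shared EM-type again.

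I expect the main (mild) obstacle to be the careful bookkeeping in the transfer between $T$ and $T'$: ensuring that treetop indiscernibility and local basedness pass through the interpretation without loss, and that any parameters used in the interpretation are uniformly absorbed into the indexed family before the modeling property is invoked. Once these bookkeeping points are settled, the proof is a direct reduction to treelessness of $T$.
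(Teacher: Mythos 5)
Your proof follows the same route as the paper's: lift the given tree by choosing $E$-class representatives, pass to a treetop indiscernible in $T$ via the modeling property, apply treelessness to get leaf-indiscernibility over the root, and push this back through the interpretation map, using that a treetop indiscernible locally based on a treetop indiscernible has the same type (so the conclusion transfers to the original $(a'_\eta)$). The only addition is your explicit remark about absorbing parameters of the interpretation, which the paper sidesteps by implicitly taking the interpretation to be $\emptyset$-definable; your absorption idea does work, since adjoining a fixed parameter tuple to each $a_\eta$ keeps it constant in the resulting treetop indiscernible, but it's worth noting this requires a small verification rather than being completely free.
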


\begin{proof}
Suppose $T'$ is interpretable in $T$ and $E$ is a $T$-definable equivalence relation such that if $M \vDash T$, then $M^{n}/E$ is the domain of a model of $T'$ whose relations are definable in $T$. Let $\mathbb{M}' = \mathbb{M}^{n}/E$ and let $\pi : \mathbb{M}^{n} \to \mathbb{M}'$ denote the interpretation map.  Suppose $(a_{\eta})_{\eta \in \omega^{\leq \omega}}$ is a treetop indiscernible in $\mathbb{M}'$.  Then for each $\eta \in \omega^{\leq \omega}$, we can choose some $\tilde{a}_{\eta} \in \pi^{-1}(a_{\eta})$.  We can then take $(b_{\eta})_{\eta \in \omega^{\leq \omega}}$ which is treetop indiscernible and locally based on $(\tilde{a}_{\eta})_{\eta \in \omega^{\leq \omega}}$ in $\mathbb{M}$.  As $T$ is treeless, $(b_{\eta})_{\eta \in \omega^{\omega}}$ is an indiscernible sequence over $b_{\emptyset}$.  In particular, $(\pi(b_{\eta}))_{\eta \in \omega^{\omega}}$ is an indiscernible sequence over $\pi(b_{\emptyset})$.  But since $(a_{\eta})_{\eta \in \omega^{\leq \omega}}$ was taken to be treetop indiscernible in $\mathbb{M}'$, we have, by local basedness, that $(a_{\eta})_{\eta \in \omega^{\leq \omega}} \equiv (\pi(b_{\eta}))_{\eta \in \omega^{\leq \omega}}$, hence $(a_{\eta})_{\eta \in \omega^{\omega}}$ is an indiscernible sequence over $a_{\emptyset}$, which shows $T'$ is treeless. 
\end{proof}

Recall the following:

\begin{defn}
Suppose $k \geq 1$.  We say that a formula $\varphi(x;y_{0}, \ldots, y_{k-1})$ has the $k$\emph{-independence property} ($k$\emph{-IP}) if there is some array $(a_{i,j} : i < k, j < \omega)$ such that, for all $X \subseteq \omega^{k}$, there is some $b_{X}$ such that 
$$
\vDash \varphi(b_{X}, a_{0,j_{0}}, a_{1,j_{1}}, \ldots, a_{k-1,j_{k-1}}) \iff (j_{0}, \ldots, j_{k-1}) \in X. 
$$
We say that a theory $T$ has the $k$-independence property if some formula does modulo $T$. A theory without $k$-IP is called $k$\emph{-dependent}. 
\end{defn}

Note that if a theory is $k$-dependent, then it is $k'$-dependent for all $k' \geq k$. The independence property is the same as $1$-IP.  The $k$-dependence hierarchy was introduced by Shelah in \cite{shelah2007definable}.  See also \cite{MR3952231} for further details on these classes of theories. 

\begin{prop}
If $T$ is treeless, then $T$ is $2$-dependent. In particular, $T$ is $k$-dependent for all $k \geq 2$. 
\end{prop}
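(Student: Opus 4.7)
The plan is to prove the contrapositive: if some formula $\varphi(x;y_0,y_1)$ has the 2-IP, then I will construct a treetop indiscernible whose leaves are not order-indiscernible over the root, so $T$ is not treeless. The ``in particular'' statement about $k$-dependence for $k \geq 2$ then follows from the upward monotonicity of $k$-dependence noted just above the proposition.

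Starting from a witness array $(a_{0,j}, a_{1,j})_{j<\omega}$ for the 2-IP of $\varphi$, I would first specialize to $X = \{(i,j) : i \leq j\}$ to obtain a single parameter $b$ with $\vDash \varphi(b, a_{0,i}, a_{1,j}) \iff i \leq j$. Next I build an initial family $(d_\eta)_{\eta \in \omega^{\leq\omega}}$ of tuples of a common sort: at the root, $d_\emptyset$ encodes $b$; at each leaf $\eta \in \omega^\omega$, $d_\eta$ encodes the pair $(a_{0,\eta(0)}, a_{1,\eta(0)})$, using only the \emph{first} coordinate of $\eta$; at the remaining internal nodes, $d_\eta$ is a fixed dummy tuple. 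Padding with constants ensures the $d_\eta$ share a common sort. Applying the modeling property for treetop indiscernibles (\cref{Ramsey class cor}) then produces a treetop indiscernible $(c_\eta)_{\eta \in \omega^{\leq\omega}}$ locally based on $(d_\eta)$.

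The key step is to analyze the formula $\chi(x,y,z)$ which, on the sort of $(d_\emptyset,d_{\eta_0},d_{\eta_1})$, evaluates $\varphi$ after \emph{swapping} the two leaf arguments\textemdash concretely $\chi(d_\emptyset,d_{\eta_0},d_{\eta_1}) = \varphi(b, a_{0,\eta_1(0)}, a_{1,\eta_0(0)})$. For leaves $\eta_0 <_{lex} \eta_1$ with $\eta_0 \wedge \eta_1 = \emptyset$ one has $\eta_0(0) < \eta_1(0)$, so this is \emph{false}; for $\eta_0 <_{lex} \eta_1$ with $\eta_0 \wedge \eta_1 \vartriangleright \emptyset$ one has $\eta_0(0) = \eta_1(0)$ and it is \emph{true}. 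Since ``$x_0 = x_1 \wedge x_2$'' is itself a quantifier-free $L_{0,P}$-formula in $(x_0,x_1,x_2)$, these two cases correspond to distinct qftp-classes of $(\emptyset,\eta_0,\eta_1)$, and the value of $\chi(d_\emptyset,d_{\eta_0},d_{\eta_1})$ is constant on each class. Local basedness together with treetop indiscernibility will therefore transfer this to $(c_\eta)$.

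Finally, taking the explicit leaves $(\eta_0, \eta_1) = ((0,0,0,\ldots), (1,0,0,\ldots))$ and $(\nu_0, \nu_1) = ((0,0,0,\ldots), (0,1,0,\ldots))$ yields two increasing pairs of leaves with the same $<_{lex}$-order type but opposite values of $\chi(c_\emptyset,-,-)$, witnessing that $(c_\eta)_{\eta \in \omega^\omega}$ fails to be indiscernible over $c_\emptyset$ and hence that $T$ is not treeless. The delicate point of the whole plan is the design of $(d_\eta)$: the distinguishing formula $\chi$ must take a uniform truth value on each qftp-class of the triple $(\emptyset,\eta_0,\eta_1)$, which is why the construction uses only $\eta(0)$, since the equality $\eta_0(0) = \eta_1(0)$ coincides with $\eta_0 \wedge \eta_1 \neq \emptyset$, a condition already recorded in the $L_{0,P}$-qftp.
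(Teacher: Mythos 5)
Your proof is correct and follows the same overall strategy as the paper (proving the contrapositive by encoding a $2$-IP configuration into a family indexed by $\omega^{\leq\omega}$, extracting a treetop indiscernible, and then reading off a failure of order-indiscernibility of the leaves), but the encoding is genuinely different. The paper takes a different parameter $a_\eta$ for each leaf $\eta$ (chosen via a set $X_\eta \subseteq \omega^\omega\times\omega^\omega$ depending on the meet-structure of triples above $\eta$), places the triple $(a_\eta, b_\eta, c_\eta)$ at each leaf and never uses the root, and reads off the contradiction from a configuration of \emph{four} leaves; this even shows that the leaves alone fail to be an indiscernible sequence. Your version specializes $X$ once and for all to $\{(i,j): i\le j\}$, places the single selector $b$ at the root, puts $(a_{0,\eta(0)}, a_{1,\eta(0)})$ at each leaf, and reads off the failure from \emph{two} leaves indiscernible only \emph{over the root}; it exploits the precise observation that, for increasing leaves $\eta_0<_{lex}\eta_1$, the condition $\eta_0(0)=\eta_1(0)$ is exactly $\eta_0\wedge\eta_1 \vartriangleright \emptyset$, which is recorded by $\mathrm{qftp}_{L_{0,P}}(\emptyset,\eta_0,\eta_1)$, so your $\chi$ is constant on each of the two qftp-classes and therefore transfers correctly under local basedness. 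Your argument is slightly more economical (two leaves and the root rather than four leaves) and makes visible that the ``over $a_{\emptyset}$'' clause in the definition of treelessness is doing work, whereas the paper's version gives the marginally stronger conclusion that no indiscernibility (even not over the root) holds on the leaves. Both are valid.
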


\begin{proof}
We prove the contrapositive.  Suppose $T$ has $2$-IP witnessed by the formula $\varphi(x;y,z)$.  Then, by compactness, there is a sequence $(b_{\eta}, c_{\eta} : \eta \in \omega^{\omega})$ such that, for all $X \subseteq \omega^{\omega} \times \omega^{\omega} $, there is some $a_{X}$ such that 
$$
\vDash \varphi(a_{X};b_{\eta}, c_{\nu}) \iff (\eta, \nu) \in X. 
$$
Now for each $\eta \in \omega^{\omega}$, let 
$$
X_{\eta} = \{(\nu, \xi) \in \omega^{\omega} \times \omega^{\omega} : \eta <_{lex} \nu <_{lex} \xi \text{ and } \eta \wedge \nu \vartriangleleft \nu \wedge \xi \}. 
$$
Choose, for each $\eta \in \omega^{\omega}$ some $a_{\eta}$ such that 
$$
\vDash \varphi(a_{\eta}; b_{\nu}, c_{\xi}) \iff (\nu, \xi) \in X_{\eta}. 
$$
Choose a sequence of same-length tuples $(d_{\eta})_{\eta \in \omega^{<\omega}}$ arbitrarily and set $d_{\eta} = (a_{\eta}, b_{\eta}, c_{\eta})$ for each $\eta \in \omega^{\omega}$.  Let $(d'_{\eta})_{\eta \in \omega^{\leq \omega}}$ be a treetop indiscernible locally based on $(d_{\eta})_{\eta \in \omega^{\leq \omega}}$ and write $d'_{\eta} = (a'_{\eta}, b'_{\eta}, c'_{\eta})$ for each $\eta \in \omega^{\omega}$. Note that we still have 
$$
\vDash \varphi(a'_{\eta}, b'_{\nu},c'_{\xi}) \iff \eta <_{lex} \nu <_{lex} \xi \text{ and } \eta \wedge \xi \vartriangleleft \nu \wedge \xi. 
$$
Choosing $\eta_{0} <_{lex} \eta_{1} <_{lex} \eta_{2} <_{lex} \eta_{3}$ in $\omega^{\omega}$ with $\eta_{0} \wedge \eta_{1} \vartriangleright \eta_{1} \wedge \eta_{3}$ and $\eta_{0} \wedge \eta_{2} \vartriangleleft \eta_{2} \wedge \eta_{3}$, we have $\vDash \neg \varphi(a'_{\eta_{0}}, b'_{\eta_{1}}, c'_{\eta_{3}})$ and $\vDash \varphi(a'_{\eta_{0}}, b'_{\eta_{2}}, c'_{\eta_{3}})$, so $(d'_{\eta})_{\eta \in \omega^{\omega}}$ is not order-indiscernible. 
\end{proof}

\section{Symmetry and base monotonicity in treeless theories} \label{sec:symmetry and base mon}

In this section we will prove that GS-independence enjoys symmetry and base monotonicity in treeless theories. To do that we start by introducing a generalization of the product operation discussed below \cref{lem_def}.

\subsection{A generalization of the product operator}

\begin{defn}
  Let $\pi(x)$ is a global partial type which is ind-definable over $Ac$ where $c$ is a $y$-tuple. For any $b\equiv_A c$, let $\pi(x,b)$ be the type we get after applying an automorphism fixing $A$ mapping $c$ to $b$. In other words, 
  if $\pi(x)$ is defined by the collection of pairs $(\phi_{i}(x;z),d\phi_{i}(z,c))$ 
  where $\phi_i \in L$ and $d\phi \in L(A)$ then $\phi(x,b)$ is defined by 
  $(\phi_{i}(x;z),d\phi_{i}(z,b))$.
\end{defn}

\begin{rem} \label{rem:not what you think}
  We note that $\pi(x,b)$ is \emph{not} obtained by simply replacing instances of $c$ in $\pi$ with $b$.  Consider, for example, the theory $T$ of an equivalence relation with infinitely many classes, all of which are infinite.  Let $A = \emptyset$ and $c$ any element and consider $\pi(x)$ the global non-forking extension over $c$ of the type axiomatized by $\{E(x,c)\}$.  As $T$ is stable, $\pi(x)$ is generically stable over $c$. Let $d \neq c$ be some element in the same class as $c$ and let $b$ be an element in a different class.  Then $E(x,d) \wedge E(x,c) \in \pi(x)$.  Simply replacing $c$ with $b$ would produce $E(x,d) \wedge E(x,b)$ which is inconsistent. In this situation, $\pi(x,b)$ is the global non-forking extension of the type over $b$ axiomatized by $E(x,b)$. 
  \end{rem}

\begin{lem}\label{lem:twisted product}
  Suppose $\pi(x)$ is a global partial type which is ind-definable over $Ac$ where $c$ is a $y$-tuple and that $\lambda(y) \supseteq \tp(c/A)$ is an $A$-ind-definable global partial type. Then there is a unique ind-definable over $A$ partial type $(\pi \rtimes \lambda) (x,y)$ such that for any $B\supseteq A$, $(a,b) \vDash (\pi \rtimes \lambda) (x,y)|_B$ if and only if $b\vDash \lambda|_B$ and $a \vDash \pi(x,b)|_{Bb}$.

  It follows that if $\pi(x)$ is ind-definable over $A$ then $\pi \rtimes \lambda = \pi \otimes \lambda$. 
\end{lem}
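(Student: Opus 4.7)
The strategy is to define $\mu(x,y) := (\pi \rtimes \lambda)(x,y)$ directly via the property it is required to satisfy, and then verify that the resulting partial type is $A$-ind-definable. Set $\mu$ to consist of all formulas $\chi(x,y;\bar u)$ (with $\bar u \in \mathbb{M}$) forced by the intended condition: $\vDash \chi(a',b';\bar u)$ holds for every $(a',b')$ satisfying $b' \vDash \lambda|_{A\bar u}$ and $a' \vDash \pi(x,b')|_{A\bar u b'}$. The hypothesis $\lambda \supseteq \tp(c/A)$ ensures $b' \equiv_A c$, so $\pi(x,b')$ is well-defined. Taking $b=c$ and $a \vDash \pi = \pi(x,c)$ shows $\mu$ is consistent; $A$-invariance of $\mu$ follows from that of $\lambda$ together with the identity $\sigma(\pi(x,b)) = \pi(x,\sigma(b))$ for $\sigma$ fixing $A$, which is immediate from the substitution-based definition of $\pi(x,b)$.

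Next I would verify the iff for arbitrary $B \supseteq A$. For $(\Leftarrow)$: if $b \vDash \lambda|_B$ and $a \vDash \pi(x,b)|_{Bb}$, then any $\chi(x,y;\bar u) \in \mu|_B$ has $\bar u \in B$, so the hypothesis over $A\bar u$ is implied by that over $B$, forcing $\vDash \chi(a,b;\bar u)$ by definition of $\mu$. For $(\Rightarrow)$: the inclusion $\lambda \subseteq \mu$ is immediate, giving $b \vDash \lambda|_B$. To obtain $a \vDash \pi(x,b)|_{Bb}$, fix $\phi_i(x;e) \in \pi(x,b)|_{Bb}$ and express $e = \tau(b,\bar u)$ as an entry-selection from $b$ and $\bar u \in B$. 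Then the formula $\chi(x,y;\bar u) := d\phi_i(\tau(y,\bar u),y) \to \phi_i(x;\tau(y,\bar u))$ lies in $\mu$---for any witness $(a',b')$, $\tau(b',\bar u) \in A\bar u b'$, so $\vDash d\phi_i(\tau(b',\bar u),b')$ places $\phi_i(x;\tau(b',\bar u))$ in $\pi(x,b')|_{A\bar u b'}$---and has parameters in $B$; applying it to $(a,b)$ gives $\vDash \phi_i(a;e)$.

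The main technical step, which I expect to be the place requiring the most care, is to show $\mu$ is ind-definable over $A$. By \cref{lem_def} it suffices to verify that $Z := \{(a,b,\bar e) : (a,b) \vDash \mu|_{A\bar e}\}$ is type-definable over $A$. Using the iff just established, $Z = Z_1 \cap Z_2$ with $Z_1 := \{(b,\bar e) : b \vDash \lambda|_{A\bar e}\}$ type-definable over $A$ by applying \cref{lem_def} to $\lambda$, and $Z_2 := \{(a,b,\bar e) : a \vDash \pi(x,b)|_{A\bar e b}\}$ type-definable over $A$ as the conjunction, ranging over each pair $(\phi_i,d\phi_i)$ in the ind-definition of $\pi$, each entry-selection pattern $\tau(\bar w, y, \bar e)$, and each $\bar w \in A$, of the $L(A)$-formulas $d\phi_i(\tau(\bar w, b, \bar e), b) \to \phi_i(a; \tau(\bar w, b, \bar e))$.

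Uniqueness is automatic: two partial types satisfying the characterizing iff have identical realizations in $\mathbb{M}$ (take $B = \mathbb{M}$), and partial types closed under logical consequence are determined by their realizations. Finally, when $\pi$ is ind-definable over $A$, the substitution defining $\pi(x,b)$ is trivial, so $\pi(x,b) = \pi$; the realizations of $\mu$ then coincide with those of $\pi \otimes \lambda$, and uniqueness yields $\pi \rtimes \lambda = \pi \otimes \lambda$.
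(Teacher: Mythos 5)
Your proof is essentially correct and ends up with the same key formulas as the paper's, but it runs the construction in the opposite direction: the paper \emph{starts} from an explicit $A$-definable scheme (the collection $S_\phi$ of $L(A)$-formulas $\psi(y,z)$ that ind-define $\{d:\phi(x,c,d)\in\pi\}$ uniformly in $c$) and sets $\pi\rtimes\lambda$ to be the closure of $\lambda(y)\cup\{\psi(y,d)\to\phi(x,y,d)\}$, which makes ind-definability immediate from the shape of the generators; you instead define $\mu$ abstractly as the intersection of the types of all ``witnesses'' $(a',b')$ and then recover the scheme $d\phi_i(\tau(y,\bar u),y)\to\phi_i(x;\tau(y,\bar u))$ in order to prove the characterizing iff and ind-definability afterward. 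Two small imprecisions worth flagging. First, the consistency remark ``Taking $b=c$ and $a\vDash\pi=\pi(x,c)$ shows $\mu$ is consistent'' is not quite right as stated: $c$ realizes $\lambda|_A$ but need not realize $\lambda|_{A\bar u}$ for arbitrary $\bar u$, so $(a,c)$ need not be a witness over $A\bar u$; the correct argument is, for a finite $\Delta\subseteq\mu$ with combined parameters $\bar u$, to choose any $b'\vDash\lambda|_{A\bar u}$ and then $a'\vDash\pi(x,b')|_{A\bar u b'}$, which realizes $\Delta$ since a witness over $A\bar u$ is a witness over each $A\bar u_i$. Second, ``take $B=\mathbb{M}$'' in the uniqueness step is informal, since global partial types are generally not realized in $\mathbb{M}$ and $\pi(x,b)$ was only defined for $b$ in $\mathbb{M}$; the intended and correct formulation is the one the paper uses, namely that an $A$-ind-definable global partial type closed under logical consequence is determined by the realizations of its restrictions to small $B\supseteq A$, which the iff fixes. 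With these cosmetic repairs the argument is sound, and the rest (the verification of both implications of the iff, the type-definability of $Z_1$ and $Z_2$ via \cref{lem_def}, and the reduction to $\pi\otimes\lambda$ when $\pi$ is already $A$-ind-definable so $\pi(x,b)=\pi$) matches the paper's content in substance.
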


\begin{proof}
  Let $\phi(x,y,z)$ be a formula in $L$ (without parameters). Let $S^\pi_\phi$ be the collection of formulas $\psi(y,z) \in L(A)$ such that for all $d$, $\phi(x,c,d) \in \pi$ if and only if $\psi(c,d)$ holds for some $\psi \in S_\phi$. Note that (*) for any $b\equiv_A c$, $S^{\pi(x,b)}_\phi = S^\pi_\phi$, so we can discard the $\pi$ in the notation and write $S_\phi$.
  Let $(\pi \rtimes \lambda) (x,y)$ be the closure under finite conjunctions and logical consequences of $\lambda(y) \cup \{\psi(y,d) \to \phi(x,y,d) : \phi(x,y,z)\in L,\psi(y,z) \in S_{\phi}\}$. Note that $\pi \rtimes \lambda$ is ind-definable over $A$ as $\lambda(y)$ is and the second part is ind-definable by the defining scheme $(\psi(y;z) \to \phi(x,y;z);z=z)$ where $\psi \in S_\phi$. It clearly satisfies the requirement by (*) above. 

  Uniqueness follows by the fact a global partial type is determined by the realizations of its restrictions to small sets. In the case when $\pi$ is ind-definable over $A$, note that $\pi(x,b) = \pi$ for any $b\equiv_A c$, so that uniqueness implies that $\pi \rtimes \lambda = \pi \otimes \lambda$.
\end{proof}

\begin{rem}
  In the context of \cref{rem:not what you think}, letting $\lambda(y)=\tp(c)$, $(\pi \rtimes \lambda) (x,y)$ is axiomatized by $\{E(x,y)\}$.
\end{rem}

\begin{prop} \label{prop:alg semidirect product}
    Let $T$ be any theory. Let $b \in \acl(A)$, $\pi(x)$ be generically stable over $Ab$ and let $\lambda(y)= \tp(b/A)$. Then $(\pi \rtimes \lambda) (x,y)$ is generically stable over $A$.  
\end{prop}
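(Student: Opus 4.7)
I would verify the two clauses of generic stability separately. Ind-definability of $\pi \rtimes \lambda$ over $A$ is immediate from \cref{lem:twisted product}, so the work is entirely in showing that $\pi \rtimes \lambda$ satisfies (GS). The key idea is that because $b \in \acl(A)$, the partial type $\lambda = \tp(b/A)$ has only finitely many realizations $\{b_{1}, \ldots, b_{n}\}$; so in any Morley sequence in $\pi \rtimes \lambda$ the second coordinate takes only finitely many values, and grouping by these values reduces the question to generic stability of the $A$-conjugates of $\pi$.

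Concretely, I would take $(a_{k}, c_{k} : k < \omega)$ with $(a_{k}, c_{k}) \vDash (\pi \rtimes \lambda)|_{A a_{<k} c_{<k}}$ and a formula $\chi(x, y) \in \pi \rtimes \lambda$ (parameters absorbed into $\chi$). By \cref{lem:twisted product}, each $c_{k}$ lies in $\{b_{1}, \ldots, b_{n}\}$, so setting $K_{i} = \{k : c_{k} = b_{i}\}$ yields a partition $\omega = K_{1} \sqcup \cdots \sqcup K_{n}$. It suffices to show that within each $K_{i}$, all but finitely many $k$ satisfy $\chi(a_{k}, b_{i})$, since $n$ is finite.

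For each $i$ I need two facts. First, $\chi(x, b_{i}) \in \pi(x, b_{i})$: if $a \vDash \pi(x, b_{i})$, then $(a, b_{i}) \vDash \pi \rtimes \lambda$ by \cref{lem:twisted product}, so $\vDash \chi(a, b_{i})$, and hence $\chi(x, b_{i})$ is a logical consequence of $\pi(x, b_{i})$. Second, the subsequence $(a_{k})_{k \in K_{i}}$ is Morley in $\pi(x, b_{i})$ over $A b_{i}$: since $c_{k} = b_{i}$ for $k \in K_{i}$, the condition $a_{k} \vDash \pi(x, c_{k})|_{A a_{<k} c_{<k} c_{k}}$ restricts by monotonicity to $a_{k} \vDash \pi(x, b_{i})|_{A b_{i} \{a_{j} : j \in K_{i}, j < k\}}$. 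Because $\pi$ is generically stable over $Ab$, applying an $A$-automorphism sending $b$ to $b_{i}$ shows $\pi(x, b_{i})$ is generically stable over $A b_{i}$, and (GS) for $\pi(x, b_{i})$ then gives that cofinitely many $k \in K_{i}$ satisfy $\chi(a_{k}, b_{i}) = \chi(a_{k}, c_{k})$. Taking the union over the finitely many $i$ establishes (GS) for $\pi \rtimes \lambda$.

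I do not anticipate a significant obstacle; the only thing requiring care is the bookkeeping for the partition and confirming that $\chi(x, b_{i}) \in \pi(x, b_{i})$. The algebraicity of $b$ over $A$ is essential: it is precisely what makes the orbit of $b$ under $\mathrm{Aut}(\mathbb{M}/A)$ finite, so that finitely many exception sets sum to a finite set.
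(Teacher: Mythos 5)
Your proof is correct and closely parallels the paper's in spirit, but differs in one technical step. The paper handles the Morley sequence $(a_i,b_i)_{i<\omega}$ by first extracting an $Ad$-indiscernible sequence (legitimate, since the Morley condition is type-definable by \cref{lem_def}, so is preserved under Erd\H{o}s--Rado extraction); then, because the $b_i$ form an $A$-indiscernible sequence in an algebraic type, they must all be equal to a single $b'$, so the whole sequence is a Morley sequence in $\pi(x,b')$ over $Ab'$ that is moreover $Ab'd$-indiscernible, and generic stability of $\pi(x,b')$ gives the contradiction. You instead avoid the indiscernibility extraction entirely: each $c_k$ already lies in the finite set of $A$-conjugates of $b$, so a pigeonhole partition $\omega = K_1 \sqcup \dots \sqcup K_n$ lets you apply (GS) to each $\pi(x,b_i)$ over $Ab_i$ separately on the subsequence $(a_k)_{k\in K_i}$, which you correctly observe is a Morley sequence by restriction. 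Both routes hinge on the same two facts\textemdash that algebraicity bounds the second coordinate, and that $\pi(x,b_i)$ is generically stable over $Ab_i$ by $A$-conjugacy\textemdash and neither is materially shorter; your version trades the extraction step for bookkeeping with the partition and directly proves (GS) rather than arguing by contradiction. One small point of precision: the phrase ``if $a \vDash \pi(x,b_i)$ then $(a,b_i)\vDash \pi\rtimes\lambda$'' should be read at the level of restrictions to small sets $B\supseteq Ab_i$ containing the parameters of $\chi$ (as \cref{lem:twisted product} is stated), from which $\chi(x,b_i)\in\pi(x,b_i)$ follows by compactness and closure of $\pi(x,b_i)$ under logical consequence; the conclusion is correct, just worth spelling out.
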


\begin{proof}
    $\pi \rtimes \lambda$ is ind-definable over $A$ by \cref{lem:twisted product}.

    We show that $\pi \rtimes \lambda$ is generically stable over $A$. Let $(a_i,b_i:i<\omega)$ be a Morley sequence in $\pi \rtimes \lambda$ over $A$. Assume for a contradiction that there is $d$ and a formula $\phi(x,y;z)\in L(A)$ so that $\neg \phi(x,y;d)\in \pi \rtimes \lambda$ and $\bigwedge_{i<\omega} \phi(a_i,b_i;d)$ holds. We may assume that $(a_i,b_i:i<\omega)$ is $Ad$-indiscernible. 

    As $b \in \acl(A)$ and $(b_i : i<\omega)$ is $A$-indiscernible in the type of $b$ over $A$, there is some $b'$ such that $b_i=b'$ for all $i<\omega$. Thus, we have that $b' \vDash \lambda|_A$ (trivially) and $a_i \vDash \pi(x,b')|_{Aa_{<i}b'}$ for all $i<\omega$. Additionally, $(a_i : i<\omega)$ is indiscernible over $Adb'$ so by generic stability $a_i \vDash \pi(x,b')|_{Aa_{<i}b'd}$, contradiction. 
\end{proof}

The following proposition is a strengthening of \cref{prop:alg semidirect product} to any generically stable type $\lambda$ provided $T$ is treeless.

\begin{prop} \label{useful prop}
Assume that $T$ is treeless. Let $\pi(x)$ be generically stable over $Ac$ and let $\lambda(y)\supseteq \mathrm{tp}(c/A)$ be generically stable over $A$. Then $(\pi \rtimes \lambda) (x,y)$ is generically stable over $A$.  
\end{prop}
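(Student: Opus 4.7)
The plan is to proceed by contradiction, lifting a failure of (GS) for $\pi \rtimes \lambda$ to a treetop-indiscernible configuration, and then using treelessness to reduce to the already-handled algebraic case \cref{prop:alg semidirect product}. By \cref{lem:twisted product}, $\pi \rtimes \lambda$ is already ind-definable over $A$, so only condition (GS) needs verification. Suppose towards contradiction that (GS) fails. Using Ramsey together with ind-definability, we extract an $Ad$-indiscernible Morley sequence $(a_n, b_n : n < \omega)$ of $\pi \rtimes \lambda$ over $A$ and a formula $\phi(x,y;d) \in \pi \rtimes \lambda$ such that $\vDash \neg \phi(a_n, b_n; d)$ for every $n$.

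Using extension for ind-definable partial types and compactness, I would build a tree $(a_\eta, b_\eta : \eta \in \omega^{\leq \omega})$ in $\mathbb{M}$, keeping $d$ attached at every node, organized as a Morley-style tree of $\pi \rtimes \lambda$ over $A$ with $(a_\eta, b_\eta, d) \equiv_A (a_0, b_0, d)$ (and thus $\neg \phi(a_\eta, b_\eta; d)$) at each node. Passing through the modeling property for treetop indiscernibles (\cref{coloring lemma}) produces a treetop-indiscernible $(a'_\eta, b'_\eta : \eta \in \omega^{\leq \omega})$ locally based on the construction; the ``constant $d$'' condition is a first-order statement and so survives as a single fixed $d' \equiv_A d$, whence $\phi(x,y;d') \in \pi \rtimes \lambda$ by $A$-invariance, and $\neg \phi(a'_\eta, b'_\eta; d')$ holds at every leaf.

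Treelessness then forces $(a'_\eta, b'_\eta : \eta \in \omega^\omega)$ to be indiscernible over $(a'_\emptyset, b'_\emptyset, d')$. The leaf-sequence $(b'_\eta : \eta \in \omega^\omega)$ is an $A$-indiscernible sequence of $\lambda$-realizations, hence a Morley sequence of $\lambda$ over $A$; combined with its $d'$-indiscernibility and generic stability of $\lambda$ (each formula $\chi(y;d') \in \lambda$ holds cofinitely along the sequence, hence, by $d'$-indiscernibility, everywhere), this forces $b'_\eta \vDash \lambda|_{Ad'}$ at every leaf. Unpacking the definition of $\pi \rtimes \lambda$, this yields $\phi(x, b'_\eta; d') \in \pi(x, b'_\eta)$ at each leaf. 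The frozen root $b'_\emptyset$ now plays the role of the algebraic $c$ of \cref{prop:alg semidirect product}: together with the indiscernibility of the $a'$-coordinates over $(b'_\emptyset, d')$, this forces $a'_\eta \vDash \pi(x, b'_\eta)|_{Ad'b'_\eta}$ at every leaf, and hence $\vDash \phi(a'_\eta, b'_\eta; d')$ at every leaf, contradicting the preserved failure.

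The main obstacle is the final step: showing, after the treeless collapse, that the $a'_\eta$'s genuinely realize $\pi(x, b'_\eta)$ over $Ad'b'_\eta$. The point is that treelessness makes the leaves sit, in a precise sense, ``algebraically above'' the root pair $(a'_\emptyset, b'_\emptyset)$, which reduces the general semidirect product case to the algebraic setting already handled in \cref{prop:alg semidirect product}; the careful bookkeeping to route the construction through this reduction, while keeping $d$ (and hence the failure) intact, is the technical heart of the argument.
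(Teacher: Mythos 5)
The proposal diverges substantially from the paper's argument, and the point where it diverges is precisely the point you yourself flag as ``the main obstacle.'' Unfortunately, the heuristic you offer to close that gap does not work, and the structure of your tree makes it unlikely that it can be repaired without essentially rebuilding the paper's construction.

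The paper does \emph{not} put the $a$-coordinates in the tree at all. At each interior node $\eta\in\omega^{<\omega}$ it places a pair $c_\eta=(\overline{a}_\eta,b_\eta)$ where $\overline{a}_\eta$ is a whole Morley sequence in $\pi(x;b_\eta)$ over $Ab_\eta$, and at each leaf it places a \emph{branch-dependent} copy $d_\eta$ of $d$. The tree $(e_\eta)$ that gets collapsed to a treetop indiscernible consists only of the $b$'s (interior) and the $d_\eta$'s (leaves); the $\overline{a}$'s are then \emph{recovered} for the collapsed tree by pushing existential statements through local basedness. Treelessness makes $(d'_\eta)_{\eta\in\kappa^\omega}$ indiscernible over $Ab'_\emptyset$, and this indiscernibility is used to shift the witness $\overline{a}_0$ into $\overline{a}_k$ and to thread a path $\eta_*$ so that $(a_{n,\eta_*(n)})_{n<\omega}$ forms a Morley sequence in $\pi(x;b'_\emptyset)$ \emph{over $Ab'_\emptyset$, all against the single fixed parameter $b'_\emptyset$}. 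That is what makes the final appeal to generic stability of $\pi(x;b'_\emptyset)$ legitimate.

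Your construction puts singletons $a_\eta$ (not Morley sequences) alongside $b_\eta$ at \emph{every} node and freezes a single $d'$. After collapsing, each leaf pair $(a'_\eta,b'_\eta)$ has $a'_\eta$ realizing $\pi(x,b'_\eta)$ relative to its \emph{own} leaf parameter $b'_\eta$, not relative to a common $b$. There is then no Morley sequence in a single $\pi(x,b)$ in sight, so no way to invoke the (GS) clause of generic stability, which is a statement about sequences realizing $\pi|_{Aba_{<k}}$ against a fixed base. Two specific steps in your write-up do not go through: (i) being an $A$-indiscernible sequence of realizations of $\lambda|_A$ does \emph{not} make $(b'_\eta)_{\eta\in\omega^\omega}$ a Morley sequence of $\lambda$ over $A$ (a constant sequence is a counterexample), so the deduction that $b'_\eta\vDash\lambda|_{Ad'}$ at every leaf is unjustified; and (ii) the proposed ``reduction to \cref{prop:alg semidirect product}'' fails because that proposition requires $b\in\acl(A)$, and the frozen root $b'_\emptyset$ is not algebraic over $A$ here — the analogy with the algebraic case is suggestive but not an argument. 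Finally, freezing $d$ throughout removes exactly the flexibility the paper's proof exploits: the varying $d_\eta$'s at the leaves are what the indiscernibility-after-collapse acts on to build the path $\eta_*$. Keeping $d$ constant and putting $(a,b)$ at the leaves forces a symmetry the data does not have.
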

\begin{proof}
The type $\pi \rtimes \lambda$ is ind-definable over $A$ by \cref{lem:twisted product}.

We show that $\pi \rtimes \lambda$ is generically stable over $A$. Let $(a_i,b_i:i<\omega)$ be a Morley sequence in $\pi \rtimes \lambda$ over $A$. Assume for a contradiction that there is $d$ and a formula $\phi(x,y;z)\in L(A)$ so that $\neg \phi(x,y;d)\in \pi \rtimes \lambda$ and $\bigwedge_{i<\omega} \phi(a_i,b_i;d)$ holds. We may assume that $(a_i,b_i:i<\omega)$ is $Ad$-indiscernible. By generic stability of $\lambda$, it follows that $(b_i:i<\omega)$ is a Morley sequence of $\lambda$ over $Ad$. 

We extend this sequence to a tree $(c_{\eta} : \eta \in \omega^{<\omega})$ so that:

\begin{itemize}
\item For $\eta\in \omega^{<\omega}$,
$$
c_{\eta} = (\overline{a}_{\eta}, b_{\eta}) = ((a_{\eta,i} : i < \omega), b_{\eta}),
$$
where $(a_{\eta,i} : i < \omega)$ is a Morley sequence in $\pi(x;b_{\eta})$ over $Ab_{\eta}$.    
\item For every $\eta \in \omega^{\omega}$, the sequence $(a_{\eta | i, \eta(i)}, b_{\eta | i} : i < \omega)$ has the same type as $(a_{i}, b_{i} : i < \omega)$ over $A$.  
\end{itemize}

To build the tree, we start by taking a Morley sequence $(a'_{i} : i < \omega)$ in $\pi(x;b_{0})|_{Ab_{0}}$ with $a'_{0} = a_{0}$, and then we set $c_{\emptyset} = ((a'_{i} : i < \omega), b_{0})$. Assume we have constructed $(c_{\eta})_{\eta \in \omega^{< k}}$ such that, for all $\eta \in \omega^{k}$, 
$$
(a_{\eta | i, \eta(i)}, b_{\eta | i} : i < k) \equiv_{A} (a_{i}b_{i} : i < k).
$$
Fix $\eta \in \omega^{k}$.  Then we choose $a'$ and $b'$ so that 
$$
(a_{\eta | i, \eta(i)}, b_{\eta | i} : i < k)a'b' \equiv_{A} (a_{i}b_{i} : i < k)a_{k}b_{k}.  
$$
Since $a_{k} \vDash \pi(x;b_{k})|_{Aa_{<k}b_{\leq k}}$, we know $a'$ satisfies $\pi(x;b')$ restricted to $A(a_{\eta | i, \eta(i)}, b_{\eta | i} : i < k)b'$ and therefore
$$
\pi(x;b') \cup \text{tp}(a'/(a_{\eta | i, \eta(i)}, b_{\eta | i} : i < k)b')
$$
is consistent.  We then choose $\overline{a}' = (a'_{i} :  i< \omega)$ to be a Morley sequence in this type with $a'_{0} = a'$ and define $c_{\eta} = (\overline{a}_{\eta}, b_{\eta})$ by setting $\overline{a}_{\eta} = \overline{a}'$ and $b_{\eta} = b'$.  This defines $(c_{\eta} : \eta \in \omega^{< k+1})$ and thus, by induction, all of $(c_{\eta})_{\eta \in \omega^{<\omega}}$. 

For each branch $\eta\in \omega^{\omega}$, we can find $d_\eta\equiv_A d$ so that $\bigwedge_{n<\omega} \phi(a_{\eta|n, \eta(n)},b_{\eta|n};d_\eta)$ holds.  Define a tree $(e_{\eta} : \eta \in \omega^{\leq \omega})$ by setting $e_{\eta} = b_{\eta}$ for $\eta \in \omega^{<\omega}$ and $e_{\eta} = d_{\eta}$ for $\eta \in \omega^{\omega}$.  Let $(e'_\eta:\eta\in \omega^{\omega}) = (b'_{\eta}  :\eta \in \omega^{<\omega})^{\frown} (d'_{\eta} : \eta \in \omega^{\omega})$ be treetop indiscernible over $A$, locally based on $(e_{\eta} : \eta \in \omega^{\leq \omega})$. By compactness, we can stretch our treetop indiscernible to $(e'_{\eta} : \eta \in \kappa^{\leq \omega})$ for $\kappa = (|T|+|A|)^{+}$.  By treelessness, then, $(d'_{\eta}:\eta\in \kappa^{\omega})$ is order-indiscernible over $Ab'_{\emptyset}$.

By induction, we will build a path $\eta_{*} \in \kappa^{\omega}$ and sequences $\overline{a}_{n} = (a_{n,i} : i < \kappa)$ for each $n < \omega$ such that 
\begin{itemize}
\item For all $n < \omega$, $\overline{a}_{n}$ realizes $\pi^{(\kappa)}(x;b'_{\emptyset})$ over $Ab'_{\emptyset}$. 
\item $(a_{n,\eta_{*}(n)} : n < \omega)$ realizes $\pi^{(\omega)}(x;b'_{\emptyset})$ over $Ab'_{\emptyset}$ in $\pi(x;b'_{\emptyset})$.  
\item For all $n < \omega$, $\vDash \phi(a_{n,\eta_{*}(n)}, b'_{\emptyset}; d'_{\eta_{*}})$.  
\end{itemize}
For each $i < \kappa$ and $\nu \in \kappa^{<\omega}$, recall the notation $C(\nu^{\frown} \langle i\rangle) = \{\eta \in \kappa^{\omega} : \nu^{\frown}\langle i \rangle \unlhd \eta\}$.  As $(e'_{\eta}: \eta \in \kappa^{\leq \omega})$ is locally based on $(e_{\eta} : \eta \in \omega^{\leq \omega})$, we know
\[
(\exists \overline{x})\left[ \pi^{(\kappa)}(\overline{x};b'_{\emptyset})|_{Ab'_{\emptyset}} \wedge \bigwedge_{i < \kappa} \bigwedge_{\eta \in C(\langle i \rangle)} \phi(x_{i},b'_{\emptyset};d_{\eta})\right],
\]
since this follows from 
\[
(\exists \overline{x})\left[ \pi^{(\omega)}(\overline{x};b_{\nu})|_{Ab_{\nu}} \wedge \bigwedge_{i < \omega} \bigwedge_{\eta \in C(\nu^{\frown}\langle i\rangle)} \phi(x_{i},b_{\nu};d_{\eta})\right],
\]
and this was witnessed by $(a_{\nu,i} : i < \omega)$.  Therefore, we can let $\overline{a}_{0} = (a_{0,i} : i < \kappa)$ be a Morley sequence in $\pi(x;b'_{\emptyset})$ such that $\vDash \bigwedge_{i < \omega} \bigwedge_{\eta \in C(\langle i\rangle)} \phi(a_{0,i},b'_{\emptyset},d_{\eta})$.  We set $\eta_{*}(0) = 0$. 

Assume we have constructed $\eta_{*}|k$ for $k>0$.  Since $(d'_{\eta} : \eta \in \omega^{\omega})$ is an indiscernible sequence over $Ab'_{\emptyset}$, we know 
$$
\langle (d'_{\eta})_{\eta \in C(\langle i \rangle)}: i < \kappa \rangle \equiv_{Ab'_{\emptyset}} \langle (d'_{\eta})_{\eta \in C((\eta_{*}|k)^{\frown} \langle i\rangle)}: i < \kappa \rangle.  
$$
Choose $\overline{a}_{k}$ such that 
$$
\overline{a}_{0}\langle (d'_{\eta})_{\eta \in C(\langle i\rangle)}: i < \kappa \rangle \equiv_{Ab_{\emptyset}} \overline{a}_{k} \langle (d'_{\eta})_{\eta \in C((\eta_{*}|k)^{\frown}\langle i\rangle)}: i < \kappa \rangle.  
$$
Then $\overline{a}_{k}$ is a Morley sequence in $\pi(x;b'_{\emptyset})$ over $Ab'_{\emptyset}$ and we have 
$$
\vDash \phi(a_{k,i}, b'_{\emptyset}, d'_{\eta})
$$
for all $\eta \in \kappa^{\omega}$ with $\eta_{*}|k \unlhd \eta$ and $\eta(k) = i$.  By generic stability and the choice of $\kappa$, there is some $i_{*} < \kappa$ such that
$$
a_{k,i_{*}} \vDash \pi(x;b_{\emptyset})|_{A(a_{i,\eta_{*}(i)})_{i < k}}.
$$
Then we set $\eta_{*}(k) = i_{*}$.  

We have constructed a path $\eta_* \in \kappa^{\omega}$ so that $(a_{n, \eta_*(n)})_{n < \omega}$ realizes $\pi^{(\omega)}(x,b'_{\emptyset})$ over $Ab'_{\emptyset}$. Extracting, we may assume that this sequence is indiscernible over $Ad'_{\eta_*}$. Now $b'_{\emptyset}\vDash \lambda|_{Ad'_{\eta_*}}$ and since $\pi(x,b'_{\emptyset})$ is generically stable over $Ab'_{\emptyset}$, also $(a_{n,\eta_*(n)}:n<\omega)$ is a Morley sequence of $\pi(x,b'_{\emptyset})$ over $Ab'_{\emptyset}d'_{\eta_*}$. Hence $(a_{0, \eta_{*}(0)},b'_{\emptyset})\vDash \pi \rtimes \lambda |_{Ad'_{\eta_*}}$. Contradiction.
\end{proof}
\subsection{Symmetry and base monotonicity in treeless theories}

\begin{cor} \label{treeless implies P}
Assume $T$ is treeless.  Then property (P) holds. More generally, if $\pi(x)$ and $\lambda(y)$ are generically stable over $A$, then so is $(\pi\otimes \lambda)(x,y)$.
\end{cor}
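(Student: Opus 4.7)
The plan is to derive the general statement immediately from the work of the preceding subsection, and then obtain property (P) by a short induction on $n$ together with a projection argument to handle the limit case $n = \omega$.

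For the general statement, let $\pi(x)$ and $\lambda(y)$ be generically stable over $A$. Pick any $c \vDash \lambda|_A$; since $\pi$ is in particular ind-definable and generically stable over $Ac$, and $\lambda(y) \supseteq \mathrm{tp}(c/A)$, the hypotheses of \cref{useful prop} are satisfied, so $(\pi \rtimes \lambda)(x,y)$ is generically stable over $A$. But $\pi$ is in fact ind-definable over $A$, so the final clause of \cref{lem:twisted product} gives $\pi \rtimes \lambda = \pi \otimes \lambda$, and the generalized statement follows.

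For property (P), we prove by induction on $n$ that $\pi^{(n)}$ is generically stable over $A$. The base case $n = 1$ is the hypothesis. For the inductive step, apply the general statement to $\pi(x_{n})$ and $\pi^{(n)}(x_{0}, \ldots, x_{n-1})$: by definition $\pi^{(n+1)} = \pi(x_{n}) \otimes \pi^{(n)}(x_{0}, \ldots, x_{n-1})$, so $\pi^{(n+1)}$ is generically stable over $A$. For the infinite product, recall from the paper that $\pi^{(\omega)} = \bigcup_{n < \omega} \pi^{(n)}$ is already known to be ind-definable over $A$. To verify (GS), observe that any formula $\varphi(x_{0}, \ldots, x_{n-1}; b) \in \pi^{(\omega)}$ already lies in $\pi^{(n)}$ for some finite $n$. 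If $(\bar a_{k} : k < \omega)$ is a Morley sequence in $\pi^{(\omega)}$ over $A$, then restricting each $\bar a_{k}$ to its first $n$ coordinates produces a Morley sequence in $\pi^{(n)}$ over $A$ (since $\pi^{(n)}$ is literally the restriction of $\pi^{(\omega)}$ to the first $n$ variables). Generic stability of $\pi^{(n)}$, already established, implies $\vDash \varphi(\bar a_{k} \!\upharpoonright\! n; b)$ for all but finitely many $k$, which is (GS) for $\pi^{(\omega)}$.

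There is essentially no obstacle: \cref{useful prop} does all of the work, and what remains is bookkeeping. The only point worth double-checking is that restricting a Morley sequence of $\pi^{(\omega)}$ to its first $n$ coordinates yields a Morley sequence of $\pi^{(n)}$, but this is automatic from the definition of $\pi^{(\omega)}$ as $\bigcup_{n} \pi^{(n)}$ and the fact that realizations of $\pi^{(\omega)}|_B$ restrict to realizations of $\pi^{(n)}|_B$ for each $B$.
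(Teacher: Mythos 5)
Your application of \cref{useful prop} has a genuine gap. The hypothesis of \cref{useful prop} requires $\lambda(y) \supseteq \mathrm{tp}(c/A)$, but you assert this as if it followed automatically from $c \vDash \lambda|_A$. It does not: the restriction $\lambda|_A$ of a generically stable partial type to $A$ is in general a \emph{proper} partial type, and a realization $c$ of $\lambda|_A$ will have a complete $A$-type properly extending $\lambda|_A$. So for such $c$, the inclusion $\lambda \supseteq \mathrm{tp}(c/A)$ simply fails, and \cref{useful prop} cannot be invoked with $\lambda$ itself. (Note also that $\pi \rtimes \lambda$ is only defined in \cref{lem:twisted product} under the same hypothesis $\lambda \supseteq \mathrm{tp}(c/A)$; when $\pi$ is ind-definable over $A$ one can of course form $\pi \otimes \lambda$ directly, but the point remains that \cref{useful prop} does not apply.)

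The paper's proof recognizes exactly this obstacle and handles it in two steps. First, it replaces $\lambda$ by $\lambda' = \lambda \cup \mathrm{tp}(c/A)$, which does contain $\mathrm{tp}(c/A)$ and is still generically stable over $A$ (by \cref{lem:gen_stable_amalgam}, since $\mathrm{tp}(c/A)$ is itself a generically stable partial type consistent with $\lambda$); then \cref{useful prop} and the final clause of \cref{lem:twisted product} give that $\pi \otimes \lambda'$ is generically stable. But this is generic stability of $\pi \otimes \lambda'$, not of $\pi \otimes \lambda$, and $\pi \otimes \lambda \subseteq \pi \otimes \lambda'$ is a proper inclusion in general; generic stability does not pass to sub-partial-types because the class of Morley sequences one must test is \emph{larger} for the smaller type. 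The paper therefore runs a second, contradiction argument: a failure of (GS) for $\pi \otimes \lambda$ is witnessed by an $Ad$-indiscernible Morley sequence $(a_i,b_i)$ together with a formula $\varphi \in \pi \otimes \lambda$; by indiscernibility all $b_i$ have the same type $q$ over $A$, so that sequence is in fact a Morley sequence of $\pi \otimes (\lambda \cup q)$, and $\varphi \in \pi \otimes (\lambda \cup q)$, contradicting generic stability of $\pi \otimes (\lambda \cup q)$ established in the first step (for the choice $c = b_0$). Your proposal contains neither the passage to $\lambda'$ nor this transfer argument, so the ``more generally'' half of the corollary is not proved. The remainder of your argument — the induction on $n$ for $\pi^{(n)}$ and the projection argument showing $\pi^{(\omega)}$ inherits generic stability — is correct, but it rests entirely on the flawed first step.
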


\begin{proof}
Pick any $c \vDash \lambda|_{A}$ and let $\lambda' = \lambda \cup \text{tp}(c/A)$. Then by the ``it follows'' part of \cref{lem:twisted product}, $\pi \rtimes \lambda' = \pi \otimes \lambda'$. 
Thus, $(\pi \otimes \lambda')$ is generically stable by \cref{useful prop}. 

Suppose, towards contradiction, that $(\pi \otimes \lambda)$ is not generically stable over $A$.  Then there is a sequence $I = (a_{i},b_{i} : i < \omega) \vDash (\pi \otimes \lambda)^{(\omega)}|_{A}$ and some $\varphi(x,y;d) \in (\pi \otimes \lambda)$ such that $\bigwedge_{i < \omega} \neg \varphi(a_{i},b_{i};d)$.  After extracting, we may assume that $I$ is $Ad$-indiscernible.  Then for $\lambda' = \lambda \cup \text{tp}(b_{0}/A)$, we have $I \vDash (\pi \otimes \lambda')^{(\omega)}|_{A}$ and $\varphi(x,y;d) \in \pi \otimes \lambda'$, contradicting the generic stability of $(\pi \otimes \lambda')$. 
\end{proof}

\begin{cor} \label{sym+basemono}
If $T$ is treeless, then $\ind^{\mathrm{GS}}$ satisfies symmetry and base monotonicity.
\end{cor}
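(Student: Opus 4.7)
The plan splits cleanly into two independent parts. For symmetry, I would simply quote the machinery that has already been built: \cref{treeless implies P} shows that treelessness implies property (P), and \cref{P implies sym} deduces symmetry of $\ind^{\mathrm{GS}}$ from (P). So symmetry is essentially immediate at this point in the paper.

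For base monotonicity, the goal is to show that $a \ind^{\mathrm{GS}}_A bc$ implies $a \ind^{\mathrm{GS}}_{Ab} c$. The key strategic idea is to use the twisted product operation $\rtimes$ from \cref{lem:twisted product} to ``pull back'' generic stability from the larger base $Ab$ to the smaller base $A$, so that we can feed a single generically-stable-over-$A$ partial type into the hypothesis and extract the conclusion from it. More precisely, let $\pi(x)$ be any global partial type generically stable over $Ab$ with $c \vDash \pi|_{Ab}$; the task is to show $c \vDash \pi|_{Aab}$. Using \cref{maximal extension}, fix the maximal global partial type $\lambda(y)$ that is generically stable over $A$ and extends $\mathrm{tp}(b/A)$, and form $\mu(x,y) = \pi \rtimes \lambda$.

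Then \cref{useful prop} applies (this is where treelessness enters) and yields that $\mu$ is generically stable over $A$. The characterization of $\rtimes$ in \cref{lem:twisted product} shows $(c,b) \vDash \mu|_A$, since $b \vDash \lambda|_A$ and $c \vDash \pi(x,b)|_{Ab} = \pi|_{Ab}$. Now applying the hypothesis $a \ind^{\mathrm{GS}}_A bc$ to the partial type $\mu$ gives $(c,b) \vDash \mu|_{Aa}$, and reading this back through \cref{lem:twisted product} yields in particular $c \vDash \pi(x,b)|_{Aab} = \pi|_{Aab}$, as required.

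I do not expect any real obstacle here: the heavy lifting has already been done in \cref{useful prop}, and what remains is bookkeeping with the definition of $\rtimes$. The only small point to be careful about is that ``$\pi(x,b)$'' in \cref{lem:twisted product} is not literal substitution (as \cref{rem:not what you think} warns), but in our situation $\pi$ is itself ind-definable over $Ab$ and we are evaluating at the very parameter $b$, so $\pi(x,b) = \pi$ and no substitution subtlety arises.
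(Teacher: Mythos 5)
Your proposal is correct and follows essentially the same route as the paper: symmetry via \cref{treeless implies P} and \cref{P implies sym}, and base monotonicity by forming $\pi \rtimes \lambda$, invoking \cref{useful prop} for generic stability over the smaller base, and unwinding the characterization in \cref{lem:twisted product}. The only cosmetic differences are that the paper argues by contradiction using the maximal generically stable extension and takes $\lambda = \mathrm{tp}(c/A)$ rather than the maximal generically stable extension of that type, but neither choice matters for the argument.
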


\begin{proof}
Symmetry follows by \cref{treeless implies P} and \cref{P implies sym}. To see base monotonicity, assume $a \ind^{\mathrm{GS}}_{A} bc$.  We want to show $a \ind^{\mathrm{GS}}_{Ac} b$.  If not, then $b \not\vDash \pi(x)|_{Aac}$, where $\pi(x)$ is the maximal global type extending $\text{tp}(b/Ac)$ which is generically stable over $Ac$. Let $\lambda(y) = \text{tp}(c/A)$.  Since $\lambda(y)$ is generically stable over $A$ and $\pi(x)$ is generically stable over $Ac$, \cref{useful prop} implies that $(\pi \rtimes \lambda) (x,y)$ is generically stable over $A$.  Since $(b,c) \vDash \pi \rtimes \lambda|_{A}$ if and only if $c \vDash \lambda|_{A}$ and $b \vDash \pi|_{Ac}$, we know that $(b,c) \not\vDash \pi \rtimes \lambda$.  However, $\pi \rtimes \lambda$ is consistent with $\text{tp}(bc/A)$, so this contradicts our assumption that $a \ind^{\mathrm{GS}}_{A} bc$.  
\end{proof}

\begin{cor} \label{rosy}
If $T$ is treeless, then $T$ is rosy.
\end{cor}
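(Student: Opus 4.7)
The plan is to verify that $\ind^{\GS}$ qualifies as a strict independence relation in the sense of Adler, and then invoke Adler's theorem that the existence of such a relation implies that $T$ is rosy. Since nearly all of the required axioms have already been established for $\ind^{\GS}$ in the general theory, the role of treelessness is entirely concentrated in supplying the two remaining axioms via \cref{sym+basemono}.

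First, I would catalogue what \cref{GS props} already provides for an arbitrary complete theory: invariance, monotonicity, normality, existence, extension on both sides, finite character, left transitivity, local character on a club (which in particular implies the usual local character), and anti-reflexivity. Next, I would invoke \cref{sym+basemono}, which contributes the two treeless-specific axioms, \emph{symmetry} and \emph{base monotonicity}. Combining symmetry with left transitivity immediately upgrades the latter to full transitivity on both sides; similarly, symmetry propagates left extension, left finite character, and left local character into their right-sided counterparts. At this point $\ind^{\GS}$ is an Aut-invariant ternary relation on small subsets of $\mathbb{M}$ satisfying the full axiomatic package of a strict independence relation.

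The final step is to cite Adler's theorem from \emph{A geometric introduction to forking and thorn-forking}: any theory admitting a strict independence relation is rosy, and the given relation refines thorn-independence. This yields the desired conclusion. I do not expect any real obstacle here; the only piece of bookkeeping is verifying that local character ``on a club'' subsumes Adler's local character axiom, which is immediate since the existence of club members gives for every finite tuple $a$ and set $B$ some $C \subseteq B$ with $|C| \leq |T|$ and $a \ind^{\GS}_{C} B$. The entire content of the corollary is thus an axiom-checking exercise whose only nontrivial input is the treelessness-dependent \cref{sym+basemono}.
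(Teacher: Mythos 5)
Your proposal is correct and follows essentially the same route as the paper: verify the axioms of a strict independence relation from \cref{GS props}, get symmetry and base monotonicity from \cref{sym+basemono}, and cite Adler's characterization of rosiness. The only cosmetic difference is that the paper spells out that full existence follows from extension plus existence, whereas you fold this into the general bookkeeping.
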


\begin{proof}
By \cite[Remark 5.5]{adler2009geometric}, a theory is rosy if and only if there is a \emph{strict independence relation}:  that is an $\mathrm{Aut}(\mathbb{M})$-invariant ternary relation on small subsets of $\mathbb{M}$ satisfying the properties listed in \cref{GS props}, plus symmetry, base monotonicity, and full existence.  Full existence is easily seen to be a consequence of extension and existence so follows from \cref{GS props} as well. Symmetry and base monotonicity follow from treelessness by \cref{sym+basemono}. 
\end{proof}

We end this section with the following general statement.
\begin{cor} \label{cor:alg base mon}
    For any theory $T$, $a \ind^{\GS}_A b$ if and only if $a \ind^{\GS}_{\acl(A)} b$.
\end{cor}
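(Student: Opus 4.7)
The plan is to handle the two implications separately. The direction $a \ind^{\GS}_{\acl(A)} b \Rightarrow a \ind^{\GS}_A b$ is relatively routine: given $\pi$ generically stable over $A$ with $b \vDash \pi|_A$, I would first note that $\pi$ is automatically generically stable over $\acl(A)$, since both ind-definability and condition (GS) transfer upward from $A$ to $\acl(A)$. The crucial intermediate step is to promote $b \vDash \pi|_A$ to $b \vDash \pi|_{\acl(A)}$. For this I would invoke the (NF) clause of \cref{forking fact}: if some $\phi(x;c) \in \pi$ with $c \in \acl(A)$ failed at $b$, then $\tp(c/Ab)$ would have to fork over $A$, contradicting the standard fact that algebraic types never fork over their base. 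Once $b \vDash \pi|_{\acl(A)}$ is established, the hypothesis delivers $b \vDash \pi|_{\acl(A)a}$, and restricting to $L(Aa)$-formulas gives $b \vDash \pi|_{Aa}$.

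The reverse direction $a \ind^{\GS}_A b \Rightarrow a \ind^{\GS}_{\acl(A)} b$ is the main obstacle, because a partial type generically stable over $\acl(A)$ need not be generically stable over the smaller set $A$, so the hypothesis cannot be applied directly. I would bridge this gap via the twisted product $\rtimes$ together with \cref{prop:alg semidirect product}. Given $\pi$ generically stable over $\acl(A)$ with $b \vDash \pi|_{\acl(A)}$, enumerate $\acl(A)$ as a tuple $c^{*}$ and set $\lambda^{*}(y) = \tp(c^{*}/A)$. Since $c^{*} \in \acl(A)$ and $\pi$ is generically stable over $Ac^{*} = \acl(A)$, \cref{prop:alg semidirect product} produces a partial type $(\pi \rtimes \lambda^{*})(x,y)$ that is generically stable over $A$, and the characterization from \cref{lem:twisted product} translates our hypothesis directly into $(b, c^{*}) \vDash (\pi \rtimes \lambda^{*})|_{A}$.

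To finish, I would upgrade $a \ind^{\GS}_A b$ to $a \ind^{\GS}_A bc^{*}$ by chaining properties from \cref{GS props}: normality gives $aA \ind^{\GS}_A bA$, algebraicity then yields $aA \ind^{\GS}_A \acl(Ab)$, and since $c^{*} \subseteq \acl(A) \subseteq \acl(Ab)$, monotonicity produces $a \ind^{\GS}_A bc^{*}$. Feeding this into the $A$-generically stable type $\pi \rtimes \lambda^{*}$ gives $(b, c^{*}) \vDash (\pi \rtimes \lambda^{*})|_{Aa}$, which by \cref{lem:twisted product} is precisely $b \vDash \pi|_{\acl(A)a}$, as desired. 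The conceptual work is the use of \cref{prop:alg semidirect product} to convert an $\acl(A)$-generically stable type into an $A$-generically stable one whose twisted product faithfully encodes the original, so that the weaker input hypothesis $a \ind^{\GS}_A b$ can be brought to bear.
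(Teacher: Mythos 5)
Your proof is correct. For the direction $a \ind^{\GS}_A b \Rightarrow a \ind^{\GS}_{\acl(A)} b$, your argument is essentially the paper's: both reduce matters to \cref{prop:alg semidirect product} and the twisted product $\pi \rtimes \lambda^{*}$ (with $\lambda^{*} = \tp(c^{*}/A)$ for an enumeration $c^{*}$ of $\acl(A)$), and both use normality, algebraicity, and monotonicity from \cref{GS props} to pass from $a \ind^{\GS}_A b$ to $a \ind^{\GS}_A b\acl(A)$ before feeding the pair into the $A$-generically stable type. The paper phrases this as ``continue as in the proof of \cref{sym+basemono}, using \cref{prop:alg semidirect product} instead of \cref{useful prop},'' which is exactly what you have spelled out.

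For the other direction $a \ind^{\GS}_{\acl(A)} b \Rightarrow a \ind^{\GS}_A b$, you take a genuinely different route. The paper argues abstractly within the framework of \cref{GS props}: left existence and algebraicity give $\acl(A) \ind^{\GS}_A b$, and then left transitivity followed by monotonicity yields $a \ind^{\GS}_A b$. You instead work directly with generically stable types: given $\pi$ generically stable over $A$ with $b \vDash \pi|_A$, you observe that $\pi$ remains generically stable over $\acl(A)$ (correct\textemdash both ind-definability and clause (GS) transfer upward, since an $\acl(A)$-Morley sequence is in particular an $A$-Morley sequence), and then you use the (NF) clause of \cref{forking fact} to upgrade $b \vDash \pi|_A$ to $b \vDash \pi|_{\acl(A)}$, since a failure $\vDash \neg\phi(b;c)$ with $c \in \acl(A)$ would force $\tp(c/Ab)$ to fork over $A$, which is impossible for an algebraic type. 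Both routes are sound; yours has the merit of isolating exactly the use of algebraicity (via the non-forking of algebraic types) rather than relying on the packaged axioms of left existence and left transitivity, while the paper's version is shorter once \cref{GS props} is in hand. One point worth noting is that the paper's argument via left transitivity requires the chain $A \subseteq \acl(A) \subseteq a\acl(A)$ and the left-transitivity property from \cref{GS props}(7), whereas your argument uses only the (NF) clause of \cref{forking fact}, so it is self-contained relative to Section 1.
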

\begin{proof}
    First, assume $a \ind^{\mathrm{GS}}_{A} b$. By algebraicity and normality (see \cref{GS props}), $a \ind^{\GS}_A \acl(Ab)$. Now continue as in the proof of \cref{sym+basemono}, using \cref{prop:alg semidirect product} instead of \cref{useful prop} to get $a \ind^{\GS}_{\mathrm{acl}(A)} \acl(Ab)$. Finally, $a \ind^{\GS}_{\mathrm{acl}(A)} b$ follows from monotonicity.

    For the other direction, assume $a \ind^{\mathrm{GS}}_{\mathrm{acl}(A)} b$. Since $\acl(A) \ind^{\GS}_A b$ by algebraicity and left existence, we may apply left transitivity and monotonicity to get $a \ind_{A} b$. 
\end{proof}


\section{Stable theories are treeless} \label{Stable section}

In this section, we will prove that stable theories are treeless. This will involve an analysis of various indiscernible sequences living inside of treetop indiscernibles. We will work with treetop indiscernibles indexed by $\mathcal{T}$ and make use of the homogeneity of this structure. 

\begin{lem} \label{switcheroo1}
Assume $T$ is stable.  Suppose $(a_{\eta})_{\eta \in \omega^{\leq \omega}}$ is treetop indiscernible. Given any $\eta_{0} <_{lex} \ldots <_{lex} \eta_{n} \in \mathcal{T}^{+}$ for $n \geq 1$, there are $\nu_{0} <_{lex} \ldots <_{lex} \nu_{n-1}$ satisfying the following:
\begin{enumerate}
\item $(\nu_{0}, \ldots, \nu_{n-1}) \vDash \mathrm{qftp}_{L_{0,P}}(\eta_{0}, \ldots, \eta_{n-1}/\{\emptyset\})$.
\item $\eta_{n} \perp \left(\bigwedge_{j < n} \nu_{j} \right)$ and $\eta_{n} <_{lex} \left( \bigwedge_{ j < n} \nu_{j} \right)$.  
\item $(a_{\eta_{0}}, \ldots, a_{\eta_{n}}) \equiv (a_{\nu_{0}}, \ldots, a_{\nu_{n-1}}, a_{\eta_{n}})$.  
\end{enumerate}
\end{lem}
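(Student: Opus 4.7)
The plan is to split the construction into two parts: finding $(\nu_{0}, \ldots, \nu_{n-1})$ satisfying (1) and (2) using the homogeneity of $\mathcal{T}$, and then establishing (3) via stability through a totally indiscernible sequence argument.

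First, for (1) and (2): since $\mathcal{T}$ is the Fra\"iss\'e limit of $\mathbb{K}_{0,P}$, by the extension property we can find a non-leaf $\xi \in \mathcal{T}$ with $\xi \perp \eta_{n}$ and $\eta_{n} <_{lex} \xi$, and then place an isomorphic copy of the finite $L_{0,P}$-substructure generated by $\{\eta_{0}, \ldots, \eta_{n-1}\}$ inside the cone above $\xi$, obtaining $\nu_{0} <_{lex} \ldots <_{lex} \nu_{n-1}$ with $\bigwedge_{j<n} \nu_{j} = \xi$ and the same quantifier-free $L_{0,P}$-type over $\{\emptyset\}$ as $(\eta_{0}, \ldots, \eta_{n-1})$.

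For (3), the idea is best seen in the base case $n=1$. Setting $\xi := \eta_{0} \wedge \eta_{1}$, one extends by a lex-increasing chain of leaves $\mu_{k}$ for $k \geq 2$ to the right of $\eta_{1}$ such that every pairwise meet inside $\{\eta_{0}, \eta_{1}, \mu_{2}, \mu_{3}, \ldots\}$ is $\xi$. Then every finite sub-tuple in lex order has the same quantifier-free $L_{0,P}$-type, so by treetop indiscernibility $(a_{\eta_{0}}, a_{\eta_{1}}, a_{\mu_{2}}, a_{\mu_{3}}, \ldots)$ is order indiscernible; stability upgrades this to total indiscernibility, and swapping the first and third coordinates yields $(a_{\eta_{0}}, a_{\eta_{1}}) \equiv (a_{\mu_{2}}, a_{\eta_{1}})$, so we can take $\nu_{0} := \mu_{2}$.

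For general $n$, I would attempt to mimic this by building a sequence of $n$-tuples $\bar{\mu}^{(k)} = (\mu^{(k)}_{0}, \ldots, \mu^{(k)}_{n-1})$ for $k \geq 0$, each an isomorphic copy of $(\eta_{0}, \ldots, \eta_{n-1})$ sitting in pairwise incomparable branches with uniform cross-meets, so that the tuple sequence $(a_{\bar{\mu}^{(k)}})_{k}$ becomes order indiscernible with $a_{\eta_{n}}$ fixed and hence totally indiscernible by stability; then a swap within the sequence would transport the original tuple into a right-of-$\eta_{n}$ copy while preserving the joint type with $a_{\eta_{n}}$.

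The main obstacle is an asymmetry with respect to $\eta_{n}$: the original $(\eta_{0}, \ldots, \eta_{n-1})$ lies on the left of $\eta_{n}$ in lex order and may share deep meets with it, whereas the target $(\nu_{0}, \ldots, \nu_{n-1})$ lies entirely to the right and incomparable, with all its meets with $\eta_{n}$ collapsed to $\eta_{n} \wedge \xi$. A naively constructed sequence of $n$-tuples containing both configurations cannot be order indiscernible over $a_{\eta_{n}}$, since the lex comparisons with $\eta_{n}$, and often the meet levels as well, differ. Resolving this is the crux: I expect it to proceed either by a staged sequence of swaps, incrementally reshaping the original into a canonical right-of-$\eta_{n}$ configuration before the final move, or by exploiting the uniqueness of non-forking extensions in the stable theory $T$ to identify the two types over $a_{\eta_{n}}$ that arise on the left and right of $\eta_{n}$.
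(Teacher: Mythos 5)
Your $n = 1$ argument is fine: choose leaves $\mu_2 <_{lex} \mu_3 <_{lex} \cdots$ to the right of $\eta_1$ forming a fan with $\eta_0, \eta_1$ at $\xi = \eta_0 \wedge \eta_1$, so that $(a_{\eta_0}, a_{\eta_1}, a_{\mu_2}, \ldots)$ is an indiscernible sequence by treetop indiscernibility, hence totally indiscernible by stability, and the swap gives the claim with $\nu_0 = \mu_2$. But for $n \geq 2$ you only name the obstruction and gesture at two strategies (``staged swaps'', ``uniqueness of non-forking extensions'') without carrying out either. The obstruction you describe---that no sequence of $n$-tuples passing from the source configuration to the target one can be order-indiscernible over $a_{\eta_n}$---is genuine, and all the content of the lemma lies in $n \geq 2$, so this is a real gap rather than a deferred routine verification.

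The paper's proof sidesteps the obstruction by using stability through the order property rather than through total indiscernibility, and, crucially, it does \emph{not} ask the auxiliary sequence of $n$-tuples to be indiscernible over $a_{\eta_n}$. One fixes a node $\zeta_*$ strictly between $\xi_* = \bigwedge_{j \leq n}\eta_j$ and $\eta_n$ that is above or incomparable with every element of the substructure generated by $\{\eta_0,\ldots,\eta_n\}$, a leaf $\eta_*$ to the right of $\eta_n$ with $\eta_n \wedge \eta_* = \zeta_*$, and a $\mathbb{Z}$-indexed chain of nodes $\xi_i \vartriangleleft \zeta_i$ along the path toward $\eta_*$, with $\zeta_i \vartriangleleft \zeta_*$ for $i < 0$ and $\zeta_* \vartriangleleft \xi_i$ for $i \geq 0$. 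Attaching to each $(\xi_i,\zeta_i)$ a copy $\overline{\eta}_i$ of $(\eta_0,\ldots,\eta_{n-1})$ realizing $\mathrm{qftp}(\overline{\eta},\xi_*,\zeta_*,\eta_*)$ produces a quantifier-free indiscernible sequence $(\overline{\eta}_i)_{i\in\mathbb{Z}}$ in which $(\overline{\eta}_i,\eta_n)$ has the original quantifier-free type for $i < 0$ and the target (incomparable, right-of-$\eta_n$) type for $i \geq 0$. If $a_{\overline{\eta}_0} a_{\eta_n}$ and $a_{\overline{\eta}} a_{\eta_n}$ had different types, then $a_{\eta_n}$ would realize a proper cut over the indiscernible sequence $(a_{\overline{\eta}_i})_{i\in\mathbb{Z}}$, giving the order property. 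This is the missing idea: let the $n$-tuples vary while $a_{\eta_n}$ stays put, and let the partition of $\mathbb{Z}$ at $\zeta_*$, not total indiscernibility, produce the contradiction with stability.
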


\begin{proof}
Suppose $\eta_{0} <_{lex} \ldots <_{lex} \eta_{n}$ is an arbitrary sequence from $\mathcal{T}^{+}$.  Let $\xi_{*} = \bigwedge_{j \leq n} \eta_{j}$ and let $\zeta_{*}$ be any element of $\mathcal{T}_{-}$ such that $\xi_{*} \vartriangleleft \zeta_{*} \vartriangleleft \eta_{n}$ and such that $\zeta_{*}$ is either strictly above or incomparable with each other element in the (finite) $L_{0,P}$-substructure of $\mathcal{T}$ generated by $\{\eta_{0}, \ldots , \eta_{n}\}$.  Choose some $\eta_{*} \in \mathcal{T}_{+}$ such that $\eta_{n} <_{lex} \eta_{*}$ and $\eta_{n} \wedge \eta_{*} = \zeta_{*}$.  

Now we choose a sequence of pairs of nodes $(\xi_{i},\zeta_{i})_{i \in \mathbb{Z}}$ satisfying the following:
\begin{enumerate}
\item For all $i < j$, 
$$
\xi_{i} \vartriangleleft \zeta_{i} \vartriangleleft \xi_{j} \vartriangleleft \zeta_{j} \vartriangleleft \eta_{*}.  
$$
\item For $i < 0$, $\zeta_{i} \vartriangleleft \zeta_{*}$ and, for $i \geq 0$, $\zeta_{*} \vartriangleleft \xi_{i}$.  
\end{enumerate}

For each $i \in \mathbb{Z}$, pick $(\eta_{i,0}, \ldots, \eta_{i,n-1})$ such that we have 
$$
(\eta_{i,0},\ldots, \eta_{i,n-1}, \xi_{i},\zeta_{i},\eta_{*}) \vDash \mathrm{qftp}_{L_{0,P}}(\eta_{0}, \ldots, \eta_{n-1},\xi_{*}, \zeta_{*}, \eta_{*}).
$$
Note that, by the choice of $\zeta_{i}$ and $\xi_{i}$, in fact, the sequence $(\overline{\eta}_{i})_{i \in \mathbb{Z}}$ is a quantifier-free indiscernible sequence, where $\overline{\eta}_{i} = (\eta_{i,0}, \ldots, \eta_{i,n-1})$.  Moreover, we have 
$$
(\overline{\eta}_{i},\eta_{n}) \vDash \mathrm{qftp}_{L_{0,P}}(\eta_{0}, \ldots, \eta_{n})
$$
for all $i < 0$, and 
$$
(\overline{\eta}_{i},\eta_{n}) \vDash \mathrm{qftp}_{L_{0,P}}(\overline{\eta}_{0},\eta_{n}),
$$  
for all $i \geq 0$.  

To conclude the proof, it suffices to show that $(a_{\eta_{0}}, \ldots, a_{\eta_{n}}) \equiv (a_{\eta_{0,0}}, \ldots, a_{\eta_{0,n-1}}, a_{\eta_{n}})$.  Suppose this is not true.  Then there is some formula $\varphi$ such that 
$$
\vDash \varphi(a_{\eta_{0}}, \ldots, a_{\eta_{n}}) \wedge \neg \varphi(a_{\eta_{0,0}}, \ldots, a_{\eta_{0,n-1}}, a_{\eta_{n}}).
$$
Then, by indiscernibility, we have 
$$
\{\varphi(a_{\eta_{i,0}}, \ldots, a_{\eta_{i,n-1}},x) : i < 0\} \cup \{ \neg \varphi(a_{\eta_{i,0}}, \ldots, a_{\eta_{i,n-1}},x) : i \geq 0\}
$$
is consistent, so $\varphi$ witnesses the order property in $T$, contradicting stability.  
\end{proof}

Recall that $\eta_{0}, \ldots, \eta_{n-1}$ are a \emph{fan} in a tree if there is a node $\nu$ such that $\eta_{i} \wedge \eta_{j} = \nu$ for all $i \neq j$. 

\begin{lem} \label{switcheroo2}
Assume $T$ is stable.  Suppose $(a_{\eta})_{\eta \in \mathcal{T}}$ is treetop indiscernible, $n \geq 1$, and $\eta_{0} <_{lex} \ldots <_{lex} \eta_{n}$ are from $\mathcal{T}^{+}$.  Then if $\eta_{1}, \ldots, \eta_{n}$ together form a fan with common meet $\zeta_{*}$ and $\eta_{0} \perp \zeta_{*}$, then there are $\nu_{1}, \ldots, \nu_{n}$ satisfying the following:
\begin{enumerate}
\item $(\nu_{1}, \ldots, \nu_{n}) \vDash \mathrm{qftp}_{L_{0,P}}(\eta_{1}, \ldots, \eta_{n})$. 
\item $\bigwedge_{1 \leq j \leq n} \nu_{j} <_{lex} \eta_{0}$.
\item $\eta_{0}, \nu_{1}, \ldots, \nu_{n}$ form a fan.
\item $(a_{\eta_{0}}, a_{\eta_{1}}, \ldots, a_{\eta_{n}}) \equiv (a_{\eta_{0}}, a_{\nu_{1}}, \ldots, a_{\nu_{n}})$.  
\end{enumerate}
\end{lem}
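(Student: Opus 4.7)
The plan is to adapt the strategy of \cref{switcheroo1}. I will construct a quantifier-free indiscernible sequence of $n$-tuples $(\overline{\nu}^{(i)})_{i \in \mathbb{Z}}$ in $\mathcal{T}$, each a fan with $\mathrm{qftp}_{L_{0,P}}(\overline{\nu}^{(i)}) = \mathrm{qftp}_{L_{0,P}}(\overline{\eta})$ where $\overline{\eta} := (\eta_1, \ldots, \eta_n)$, arranged so that $(\overline{\nu}^{(i)}, \eta_0) \vDash \mathrm{qftp}_{L_{0,P}}(\overline{\eta}, \eta_0)$ for $i \geq 0$, while for $i \leq -1$ the tuple $(\eta_0, \overline{\nu}^{(i)})$ realizes the target ``fan-with-$\eta_0$'' qftp (i.e., $\eta_0, \nu^{(i)}_1, \ldots, \nu^{(i)}_n$ form a fan with $\bigwedge_{j}\nu^{(i)}_j <_{lex} \eta_0$). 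Treetop indiscernibility of $(a_\eta)_{\eta \in \mathcal{T}}$ then makes $(a_{\overline{\nu}^{(i)}})_{i \in \mathbb{Z}}$ a model-theoretic indiscernible sequence over $\emptyset$. If $\mathrm{tp}(a_{\overline{\nu}^{(0)}}/a_{\eta_0}) \neq \mathrm{tp}(a_{\overline{\nu}^{(-1)}}/a_{\eta_0})$, any distinguishing formula would witness the order property on this sequence with parameter $a_{\eta_0}$, contradicting stability. Hence $(a_{\eta_0}, a_{\eta_1}, \ldots, a_{\eta_n}) \equiv (a_{\eta_0}, a_{\nu^{(-1)}_1}, \ldots, a_{\nu^{(-1)}_n})$, and we take $\nu_j := \nu^{(-1)}_j$.

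The bulk of the work is constructing such a sequence in $\mathcal{T}$. Let $\alpha = \eta_0 \wedge \zeta_*$, so $\alpha \vartriangleleft \eta_0$ and $\alpha \vartriangleleft \zeta_*$. Using the homogeneity of $\mathcal{T}$ as the Fra\"iss\'e limit of $\mathbb{K}_{0,P}$, choose an infinite chain of non-leaf nodes
\[
\ldots \vartriangleleft \beta_{-2} \vartriangleleft \beta_{-1} \vartriangleleft \gamma \vartriangleleft \beta_0 \vartriangleleft \beta_1 \vartriangleleft \ldots
\]
with $\gamma \vartriangleleft \eta_0$, such that at each $\beta_i$ one attaches a fan $\overline{\nu}^{(i)} = (\nu^{(i)}_1, \ldots, \nu^{(i)}_n)$ of $n$ leaves in $n$ fixed ``fan directions'' $d_1 <_{lex} \cdots <_{lex} d_n$ at $\beta_i$, all lex-less than a single ``chain direction'' $d_*$ along which the chain continues past $\beta_i$. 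Arrange also that from $\gamma$, the direction toward $\eta_0$ is lex-less than the direction toward $\beta_0$. Every finite sub-configuration is a finite $\wedge$-tree in $\mathbb{K}_{0,P}$, so homogeneity (and a standard extension-by-one-element argument) produces the full picture inside $\mathcal{T}$.

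The verifications are routine. For $i \leq -1$, one has $\beta_i \vartriangleleft \gamma \vartriangleleft \eta_0$, while each $\nu^{(i)}_j$ descends from $\beta_i$ in a direction distinct from $d_*$, so $\nu^{(i)}_j \wedge \eta_0 = \beta_i$ and $\nu^{(i)}_j <_{lex} \eta_0$; thus $\{\eta_0\} \cup \overline{\nu}^{(i)}$ is a fan at $\beta_i = \bigwedge_{j} \nu^{(i)}_j <_{lex} \eta_0$, as required. For $i \geq 0$, $\nu^{(i)}_j$ descends from $\beta_0 \vartriangleright \gamma$ while $\eta_0$ descends from $\gamma$ in a lex-smaller direction, yielding $\eta_0 \wedge \nu^{(i)}_j = \gamma$ and $\nu^{(i)}_j \wedge \nu^{(i)}_k = \beta_i$, matching $\mathrm{qftp}_{L_{0,P}}(\overline{\eta}, \eta_0)$ under $(\gamma, \beta_i) \leftrightarrow (\alpha, \zeta_*)$. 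Finally, quantifier-free indiscernibility of the sequence $(\overline{\nu}^{(i)})_{i \in \mathbb{Z}}$ in $\mathcal{T}$ is automatic because, for $i < j$, every cross-fan meet $\nu^{(i)}_k \wedge \nu^{(j)}_l$ equals $\beta_i$ (since $\nu^{(j)}_l$ descends from $\beta_j$, which lies above $\beta_i$ via the direction $d_*$, distinct from the fan directions), so the $L_{0,P}$-qftp of any $m$-subtuple of the sequence depends only on $m$.

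The main obstacle is precisely this bookkeeping: one must arrange the chain, the fan directions, and the lex behavior at $\gamma$ simultaneously so that the single sequence $(\overline{\nu}^{(i)})_{i \in \mathbb{Z}}$ is quantifier-free indiscernible while its joint quantifier-free type with the external node $\eta_0$ ``jumps'' at $i = 0$ between the target fan-with-$\eta_0$ configuration and the original configuration of $(\overline{\eta}, \eta_0)$. Once the sequence is in hand, the stability-via-order-property step proceeds exactly as in the last paragraph of the proof of \cref{switcheroo1}.
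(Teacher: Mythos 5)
Your proof is correct and takes essentially the same approach as the paper: both construct a bi-infinite, quantifier-free indiscernible sequence of $n$-fans inside $\mathcal{T}$ whose joint quantifier-free type with the external leaf $\eta_0$ ``jumps'' exactly once (from the original configuration to the fan-with-$\eta_0$ configuration), and then invoke the order property to conclude that the corresponding $a$-tuples must have equal types over $a_{\eta_0}$. The only real difference is cosmetic: the paper parametrizes its chain by pairs $(\xi_i,\zeta_i)$ anchored at $\xi_0 = \bigwedge_{j\le n}\eta_j$, $\zeta_0 = \zeta_*$ together with an auxiliary leaf $\eta_*$ above $\zeta_*$, whereas you use a single chain $(\beta_i)$ with one interpolated node $\gamma$ where $\eta_0$ branches off, which is a slightly leaner way to encode the same combinatorics.
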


\begin{proof}
Let $\xi_{*} = \bigwedge_{j \leq n} \eta_{j}$ and let $\zeta_{*} = \bigwedge_{1 \leq j \leq n} \eta_{j}$.  By assumption, $\xi_{*} \vartriangleleft \zeta_{*}$.  Choose any $\eta_{*} \in \mathcal{T}_{+}$ with $\zeta_{*} \vartriangleleft \eta_{*}$ and $\eta_{n} <_{lex} \eta_{*}$.  Choose $(\xi_{i}, \zeta_{i})_{i \in \mathbb{Z}}$ such that
\begin{enumerate}
\item $\xi_{0} = \xi_{*}$ and $\zeta_{0} = \zeta_{*}$, and 
\item for all $i < j$, $\xi_{i} \vartriangleleft \zeta_{i} \vartriangleleft \xi_{j} \vartriangleleft \zeta_{j} \vartriangleleft \eta_{*}$. 
\end{enumerate}
From here, we follow the proof of \cref{switcheroo1}.  We pick, for each $i \in \mathbb{Z}$, some $(\eta_{i,1}, \ldots, \eta_{i,n})$ such that 
$$
(\eta_{i,1},\ldots, \eta_{i,n}, \xi_{i},\zeta_{i}, \eta_{*}) \vDash \mathrm{qftp}_{L_{0,P}}(\eta_{1}, \ldots, \eta_{n},\xi_{*}, \zeta_{*}, \eta_{*}).
$$
Then $(\overline{\eta}_{i})_{i \in \mathbb{Z}}$ is a quantifier-free indiscernible sequence, where $\overline{\eta}_{i} = (\eta_{i,1}, \ldots, \eta_{i,n})$, with
$$
(\eta_{0}, \overline{\eta}_{i}) \vDash \mathrm{qftp}_{L_{0,P}}(\eta_{0}, \ldots, \eta_{n})
$$
for all $i \leq 0$, and 
$$
(\eta_{0},\overline{\eta}_{i}) \vDash \mathrm{qftp}_{L_{0,P}}(\eta_{0}, \overline{\eta}_{0}),
$$  
for all $i \geq 0$.  

Then we define $\nu_{1}, \ldots, \nu_{n}$ by setting $\nu_{j} = \eta_{-1,j}$ for $1 \leq j \leq n$.  Condition (1) and (2) are clearly satisfied.  Note that $\nu_{1}, \ldots, \nu_{n}$ form a fan with common meet $\zeta_{-1}$.  Given any $j$ with $1 \leq j \leq n$, we also have $\nu_{j} \wedge \eta_{*} = \zeta_{-1}$ and since $\zeta_{-1} \vartriangleleft \xi_{0} \vartriangleleft \eta_{*}$, we have $\nu_{j} \wedge \xi_{0} = \zeta_{-1}$ and hence $\nu_{j} \wedge \eta_{0} = \zeta_{-1}$.  This shows that $\eta_{0}, \nu_{1}, \ldots, \nu_{n}$ form a fan, so condition (3) is satisfied as well.  

Finally, we check Condition (4).  Suppose this fails.  Then there is some formula $\varphi$ such that 
$$
\vDash \varphi(a_{\eta_{0}}, a_{\eta_{1}},\ldots, a_{\eta_{n}}) \wedge \neg \varphi(a_{\eta_{0}}, a_{\eta_{-1,1}} \ldots, a_{\eta_{-1,n}}).
$$
Then, by indiscernibility, we have 
$$
\{\varphi(x;a_{\eta_{i,1}}, \ldots, a_{\eta_{i,n}}) : i \leq 0\} \cup \{ \neg \varphi(x;a_{\eta_{i,1}}, \ldots, a_{\eta_{i,n}}) : i < 0\}
$$
is consistent, so $\varphi$ witnesses the order property in $T$, contradicting stability.  
\end{proof}

\begin{thm} \label{main stable thm}
Suppose $T$ is stable and $(a_{\eta})_{\eta \in \mathcal{T}}$ is a treetop indiscernible.  Then $(a_{\eta})_{\eta \in \mathcal{T}_{+}}$ is an indiscernible sequence (ordered by $<_{lex}$). 
\end{thm}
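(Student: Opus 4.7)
The plan is to prove the statement by induction on the length $n$ of the lex-ordered tuples being compared. The target is: for any two lex-ordered $n$-tuples $(\eta_0, \ldots, \eta_{n-1})$ and $(\mu_0, \ldots, \mu_{n-1})$ from $\mathcal{T}_+$, the $a$-tuples $(a_{\eta_0}, \ldots, a_{\eta_{n-1}})$ and $(a_{\mu_0}, \ldots, a_{\mu_{n-1}})$ have the same type. The base cases $n = 1, 2$ are immediate from treetop indiscernibility, since any two lex-ordered tuples of leaves of length at most two share the same $\mathrm{qftp}_{L_{0,P}}$.

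For the inductive step with $n \geq 3$, I would show that any lex-ordered $n$-tuple of leaves has the same $a$-tuple type as an $a$-tuple indexed by a lex-ordered $n$-fan. Since any two lex-ordered $n$-fans share the same $\mathrm{qftp}_{L_{0,P}}$, treetop indiscernibility will then finish the inductive step. First, I would apply \cref{switcheroo1} to obtain
\[
(a_{\eta_0}, \ldots, a_{\eta_{n-1}}) \equiv (a_{\nu_0}, \ldots, a_{\nu_{n-2}}, a_{\eta_{n-1}}),
\]
where the $\nu_j$ share the $\mathrm{qftp}_{L_{0,P}}$ of $(\eta_0, \ldots, \eta_{n-2})$ and $\eta_{n-1}$ has been relocated to the lex-leftmost position, incomparable with $\bigwedge_{j < n-1} \nu_j$. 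Next, I would replace $(a_{\nu_0}, \ldots, a_{\nu_{n-2}})$ with $(a_{\rho_0}, \ldots, a_{\rho_{n-2}})$, where $(\rho_0, \ldots, \rho_{n-2})$ is a lex-ordered $(n-1)$-fan placed similarly to the lex-right of and incomparable with $\eta_{n-1}$. Once this replacement is accomplished, the tuple $(\eta_{n-1}, \rho_0, \ldots, \rho_{n-2})$ satisfies the hypothesis of \cref{switcheroo2} (with $\eta_{n-1}$ playing the role of $\eta_0$), and applying that lemma yields an $a$-tuple indexed by a full lex-ordered $n$-fan.

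The replacement step is where the inductive hypothesis must be combined with a stability argument patterned on the proofs of the switcheroo lemmas. The inductive hypothesis supplies $(a_{\nu_0}, \ldots, a_{\nu_{n-2}}) \equiv (a_{\rho_0}, \ldots, a_{\rho_{n-2}})$ in isolation; to upgrade this to an equivalence valid in the presence of the parameter $a_{\eta_{n-1}}$, I would construct a quantifier-free indiscernible sequence of $(n-1)$-configurations in $\mathcal{T}$ whose $\mathrm{qftp}_{L_{0,P}}$ alternates between that of $(\nu_0, \ldots, \nu_{n-2})$ and that of a fan, all placed uniformly to the lex-right of and incomparable with a fixed $\eta_{n-1}$. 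Stability, applied through the failure of the order property, would then force the types over $a_{\eta_{n-1}}$ on either side of the ``switch'' to coincide.

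The main obstacle will be precisely this upgrade: arranging the combinatorial tree so that the indiscernible sequence of $(n-1)$-configurations carries the needed uniformity relative to $\eta_{n-1}$, while the inductive hypothesis remains applicable. The construction will mirror those in the proofs of \cref{switcheroo1} and \cref{switcheroo2}, but with whole $(n-1)$-tuple shapes, rather than isolated nodes, varying across the indiscernible sequence, and one must additionally reconcile the possible cyclic mismatch between the position order of the tuple obtained after \cref{switcheroo1} and the lex order required to apply \cref{switcheroo2}.
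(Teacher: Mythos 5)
Your high-level architecture is exactly the paper's: reformulate the goal as ``every lex-ordered $n$-tuple of leaves has the same $a$-type as a fan,'' induct on $n$, apply \cref{switcheroo1} to peel off the rightmost leaf, replace the remaining tuple by a fan over that leaf, and then apply \cref{switcheroo2} to reassemble a full fan. The base cases and the bookkeeping about permutations are also handled correctly.

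The gap is in the ``replacement step,'' which you rightly flag as the main obstacle but do not close, and the approach you sketch for it does not work as stated. After applying \cref{switcheroo1} to $\eta_{0} <_{lex} \ldots <_{lex} \eta_{n+1}$, the paper sets $\eta_{*} = \eta_{n+1} \wedge \bigwedge_{j\leq n}\nu_{j}$ and observes that $\mathcal{T}' = \{\xi \in \mathcal{T} : \eta_{*} \vartriangleleft \xi,\ \eta_{n+1} <_{lex}\xi\}$ is again a copy of $\mathcal{T}$ and that $(a_{\eta})_{\eta\in\mathcal{T}'}$ is a treetop indiscernible \emph{over the parameter} $a_{\eta_{n+1}}$. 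The inductive hypothesis is being proved for all treetop indiscernibles, including those over parameters, so it applies directly to $(a_{\eta})_{\eta\in\mathcal{T}'}$ and yields $(a_{\nu_{0}},\ldots,a_{\nu_{n}}) \equiv_{a_{\eta_{n+1}}} (a_{\eta'_{0}},\ldots,a_{\eta'_{n}})$ for a fan $\eta'_{0},\ldots,\eta'_{n}$ in $\mathcal{T}'$. No additional stability argument is needed: the ``upgrade over the parameter'' is already contained in the IH once you notice that $\mathcal{T}'$ is itself an index structure for a treetop indiscernible over $a_{\eta_{n+1}}$.

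Your proposed substitute---an alternating sequence of $(n-1)$-configurations whose $\mathrm{qftp}_{L_{0,P}}$ switches from the shape of $(\nu_{0},\ldots,\nu_{n-2})$ to a fan, followed by an order-property contradiction---has a conceptual flaw. For the order-property argument in the switcheroo lemmas to run, the moving configurations must all have the \emph{same} quantifier-free type so that treetop indiscernibility makes the corresponding $a$-tuples an indiscernible sequence (this is what lets one slide the cut point). If the shapes genuinely alternate, the sequence of configurations is not quantifier-free indiscernible, and while the IH does tell you the individual $a$-tuples have the same $1$-type, it gives you nothing about pair types of consecutive configurations, so you cannot conclude the $a$-tuples form an indiscernible sequence. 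Without that, the order-property contradiction cannot be extracted. The paper's observation about $\mathcal{T}'$ avoids this issue entirely.
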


\begin{proof}
Note that if $\eta_{0} <_{lex} \ldots <_{lex} \eta_{n}$ and $\nu_{0} <_{lex} \ldots <_{lex} \nu_{n}$ are fans from $\omega^{\omega}$ then 
$$
\mathrm{qftp}_{L_{0,P}}(\overline{\eta}) = \mathrm{qftp}_{L_{0,P}}(\overline{\nu}).
$$
Therefore, in order to prove the theorem, it suffices to prove that if $\eta_{0} <_{lex} \ldots <_{lex} \eta_{n}$ is a sequence from $\omega^{\omega}$ and $\nu_{0} <_{lex} \ldots <_{lex} \nu_{n}$ is a fan in $\omega^{\omega}$, then
$$
(a_{\eta_{0}}, \ldots, a_{\eta_{n}}) \equiv (a_{\nu_{0}}, \ldots, a_{\nu_{n}}).  
$$
We will prove this by induction on $n$.  The $n = 0,1$ cases are trivial.  

Suppose now we are given $\eta_{0} <_{lex} \ldots <_{lex} \eta_{n+1}$ from $\mathcal{T}_{+}$.  By \cref{switcheroo1}, we find $\nu_{0}, \ldots, \nu_{n} \in \mathcal{T}_{+}$ such that 
\begin{enumerate}
\item $(\nu_{0}, \ldots, \nu_{n}) \vDash \mathrm{qftp}_{L_{0,P}}(\eta_{0}, \ldots, \eta_{n})$.
\item $\eta_{n+1} \perp \left(\bigwedge_{j \leq n} \nu_{j} \right)$ and $\eta_{n} <_{lex} \left( \bigwedge_{ j \leq n} \nu_{j} \right)$.  
\item $(a_{\eta_{0}}, \ldots, a_{\eta_{n}},a_{\eta_{n+1}}) \equiv (a_{\nu_{0}}, \ldots, a_{\nu_{n}}, a_{\eta_{n+1}})$.  
\end{enumerate}
Let $\eta_{*} = \eta_{n+1} \wedge \bigwedge_{j \leq n} \nu_{j}$ and set $\mathcal{T}' = \{\xi \in \mathcal{T} : \eta_{*} \vartriangleleft \xi, \eta_{n+1} <_{lex} \xi\}$.  Then $\nu_{0}, \ldots, \nu_{n} \in \mathcal{T}'$ and $(a_{\eta})_{\eta \in \mathcal{T}'}$ is a treetop indiscernible over $a_{\eta_{n+1}}$.  By induction, there is a fan $\eta'_{0} <_{lex} \ldots <_{lex} \eta'_{n}$ such that 
$$
(a_{\nu_{0}}, \ldots, a_{\nu_{n}}) \equiv_{a_{\eta_{n+1}}} (a_{\eta'_{0}}, \ldots, a_{\eta'_{n}}).  
$$
Then, by \cref{switcheroo2} applied to the tuple $(a_{\eta_{n+1}}, a_{\eta'_{0}}, \ldots, a_{\eta'_{n}})$, there are, in $\mathcal{T}$, $\eta''_{0} <_{lex} \ldots <_{lex} \eta''_{n} <_{lex} \eta_{n+1}$ such that $\eta''_{0}, \ldots, \eta''_{n}$, and $\eta_{n+1}$ form a fan and 
$$
(a_{\eta'_{0}}, \ldots, a_{\eta'_{n}}, a_{\eta_{n+1}}) \equiv (a_{\eta''_{0}}, \ldots, a_{\eta''_{n}}, a_{\eta_{n+1}}).  
$$
This yields 
$$
(a_{\eta_{0}}, \ldots, a_{\eta_{n}},a_{\eta_{n+1}}) \equiv (a_{\eta''_{0}}, \ldots, a_{\eta''_{n}}, a_{\eta_{n+1}}),
$$
as desired. \end{proof}

\begin{cor}
If $T$ is stable, then $T$ is treeless. 
\end{cor}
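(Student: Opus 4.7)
The plan is to invoke Proposition \ref{no root}. Since stable theories are NIP, Proposition \ref{no root} tells us it is enough to establish the following weaker property: for every treetop indiscernible $(a_{\eta})_{\eta \in \omega^{\leq \omega}}$, the leaf sequence $(a_{\eta})_{\eta \in \omega^{\omega}}$ is $<_{lex}$-indiscernible, without having to verify indiscernibility over $a_{\emptyset}$ directly. This weaker property is almost what Theorem \ref{main stable thm} delivers, except that Theorem \ref{main stable thm} is phrased for treetop indiscernibles indexed by $\mathcal{T}$ rather than $\omega^{\leq \omega}$.

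Given a treetop indiscernible $(a_{\eta})_{\eta \in \omega^{\leq \omega}}$, my plan is to use the modeling property\textemdash noting that $\mathrm{Age}(\omega^{\leq \omega}) = \mathrm{Age}(\mathcal{T}) = \mathbb{K}_{0,P}$\textemdash to produce a treetop indiscernible $(b_{\eta})_{\eta \in \mathcal{T}}$ locally based on $(a_{\eta})_{\eta \in \omega^{\leq \omega}}$. Theorem \ref{main stable thm} then applies directly to $(b_{\eta})_{\eta \in \mathcal{T}}$, delivering that $(b_{\eta})_{\eta \in \mathcal{T}_{+}}$ is a $<_{lex}$-indiscernible sequence.

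Finally, I would transfer indiscernibility back to $(a_{\eta})_{\eta \in \omega^{\omega}}$. Because both $(a_{\eta})$ and $(b_{\eta})$ are treetop indiscernibles, their EM-types are complete at each qftp realized in $\mathbb{K}_{0,P}$, and so local basedness upgrades to EM-type equality on all such qftps. Given $<_{lex}$-increasing tuples $\overline{\mu}, \overline{\rho}$ from $\omega^{\omega}$, I would select $\overline{\mu}^{*}, \overline{\rho}^{*}$ in $\mathcal{T}_{+}$ realizing the $L_{0,P}$-qftps of $\overline{\mu}$ and $\overline{\rho}$ (these exist since $\mathcal{T}$ has age $\mathbb{K}_{0,P}$); EM-type agreement yields $a_{\overline{\mu}} \equiv b_{\overline{\mu}^{*}}$ and $a_{\overline{\rho}} \equiv b_{\overline{\rho}^{*}}$, while the $<_{lex}$-indiscernibility of $(b_{\eta})_{\eta \in \mathcal{T}_{+}}$ gives $b_{\overline{\mu}^{*}} \equiv b_{\overline{\rho}^{*}}$. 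Chaining these equivalences produces $a_{\overline{\mu}} \equiv a_{\overline{\rho}}$, which verifies the hypothesis of Proposition \ref{no root}. Beyond this routine bookkeeping with quantifier-free types I do not expect any genuine obstacle; the corollary follows by stringing together previously proved results.
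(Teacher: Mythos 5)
Your proposal is correct and takes essentially the same route as the paper, which simply cites \cref{main stable thm} and \cref{no root} together with the fact that stable theories are NIP. You have spelled out the routine transfer between the index models $\omega^{\leq\omega}$ and $\mathcal{T}$ (both of age $\mathbb{K}_{0,P}$) via the modeling property and EM-type completeness, a step the paper leaves implicit but which is exactly what makes the two cited results compose.
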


\begin{proof}
This follows from \cref{main stable thm} and \cref{no root}, since stable theories are NIP. 
\end{proof}

\section{From  \texorpdfstring{NSOP$_{1}$}{NSOP1} to simple} \label{simple section}

We will show in this section that treeless NSOP$_{1}$ theories are simple. We will show this, first, by analyzing $\ind^{\mathrm{GS}}$ in NSOP$_{1}$ theories, showing that it always agrees with Kim-independence in NSOP$_{1}$ theories with existence, and over models in all NSOP$_{1}$ theories.  We also give a rapid alternative proof in the special case of binary NSOP$_{1}$ theories, using the `lifting lemma' machinery from \cite[Section 6]{kaplan2019transitivity}.  

\subsection{Treeless NSOP$_{1}$ theories} \begin{defn}
\text{ }
\begin{enumerate}
\item We say $\varphi(x;y)$ has the \emph{tree property} if there is some $k < \omega$ and a collection of tuples $(a_{\eta})_{\eta \in \omega^{<\omega}}$ satisfying the following:
\begin{enumerate}
\item For all $\eta \in \omega^{\omega}$, $\{\varphi(x;a_{\eta | i}) : i < \omega\}$ is consistent.  
\item For all $\eta \in \omega^{<\omega}$, $\{\varphi(x;a_{\eta \frown \langle i \rangle}) : i < \omega\}$ is $k$-inconsistent.
\end{enumerate}
We say $T$ is \emph{simple} if no formula has the tree property modulo $T$.
\item We say $\varphi(x;y)$ has \emph{SOP}$_{1}$ if there is a collection of tuples $(a_{\eta})_{\eta \in 2^{<\omega}}$ satisfying the following:
\begin{enumerate}
\item For all $\eta \in 2^{\omega}$, $\{\varphi(x;a_{\eta | i}) : i < \omega\}$ is consistent. 
\item For all $\eta \perp \nu \in 2^{<\omega}$, if $\eta \unrhd (\eta \wedge \nu) \frown \langle 0 \rangle$ and $\nu = (\eta \wedge \nu) \frown \langle 1 \rangle$, then $\{\varphi(x;a_{\eta}), \varphi(x;a_{\nu})\}$ is inconsistent.  
\end{enumerate}
We say $T$ is NSOP$_{1}$ if no formula has SOP$_{1}$ modulo $T$.  
\end{enumerate}
\end{defn}

It is shown in \cite{kaplan2017kim} that, in any NSOP$_{1}$ theory, there is an independence relation $\ind^{K}$ called \emph{Kim-independence}, defined over models, that generalizes the familiar non-forking independence simple theories and has many nice properties.  Moreover, in simple theories, over models $\ind^{K}$ and $\ind^{f}$ agree \cite[Proposition 8.4]{kaplan2017kim}, where $\ind^{f}$ denotes non-forking independence. 

\begin{defn}
Suppose $M \vDash T$.  By an $\ind^{K}$\emph{-Morley sequence over} $M$, we mean an $M$-indiscernible sequence $I = \langle a_{i} : i < \omega \rangle$ such that $a_{i} \ind^{K}_{M} a_{<i}$ for all $i < \omega$. 
\end{defn}

\begin{fact} \label{Kim-independence facts}
Suppose $T$ is an NSOP$_{1}$ theory.  We have the following:
\begin{enumerate}
\item Symmetry:  if $M \vDash T$, $a \ind^{K}_{M} b$ if and only if $b \ind^{K}_{M} a$.  \cite[Theorem 5.16]{kaplan2017kim} 
\item $T$ is simple if and only if $\ind^{K}$ satisfies base monotonicity\textemdash that is, whenever $M \preceq N \vDash T$, if $a \ind^{K}_{M} Nb$, then $a \ind^{K}_{N} b$.   \cite[Proposition 8.8]{kaplan2017kim} 
\item Witnessing:  If $M \vDash T$ and $I = \langle a_{i} : i < \omega \rangle$ is an $\ind^{K}$-Morley sequence over $M$ with $a_{0} = a$, then $b \ind^{K}_{M} a$ if and only if there is $I' \equiv_{Ma} I$ such that $I'$ is $Mb$-indiscernible.  \cite[Theorem 5.1]{kaplan2019transitivity}
\item Lifting lemma:  If $M \preceq N \vDash T$ and $a \ind^{K}_{M} N$, then there is an $N$-indiscernible sequence $I = \langle a_{i} : i < \omega \rangle$ with $a_{0} = a$ that is both $\ind^{K}$-Morley over $M$ and $\ind^{K}$-Morley over $N$.  \cite[Proposition 3.3]{kaplan2019transitivity}
\end{enumerate}
\end{fact}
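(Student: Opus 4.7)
The plan is to sketch the standard proof strategy for each item of Fact~\ref{Kim-independence facts}, all of which are established results about Kim-independence in NSOP$_1$ theories. The common combinatorial input is Kim's lemma for Kim-dividing: a formula $\varphi(x;a)$ Kim-divides over $M$ if and only if $\{\varphi(x;a_i):i<\omega\}$ is inconsistent for every (equivalently, some) $\ind^{K}$-Morley sequence $(a_i:i<\omega)$ starting with $a$. This lets one freely exchange one Morley witness for another, which is what makes the tree arguments below work.

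For symmetry (1), I would argue by contradiction. Suppose $a \ind^{K}_M b$ but $b \nind^{K}_M a$, with $\psi(y;a) \in \tp(b/Ma)$ Kim-dividing over $M$. Using extension and a tree-building procedure reminiscent of the standard NSOP$_{1}$ tree arguments, one constructs a binary tree $(a_\eta)_{\eta \in 2^{<\omega}}$ in which each branch is an $\ind^{K}$-Morley sequence over $M$ of the same type as a Morley sequence starting with $a$, while $a \ind^{K}_M b$ lets one thread $b$ through a single path so that $\psi(y;a_\eta)$ holds along it. Extracting an $s$-indiscernible tree and labelling instances of $\psi$ versus $\neg\psi$ along incomparable nodes produces an SOP$_{1}$-pattern for $\psi$, contradicting NSOP$_1$. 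For (2), the forward direction is immediate from the coincidence of $\ind^{K}$ with $\ind^{f}$ over models in simple theories plus base monotonicity of non-forking. For the converse, one checks that $\ind^{K}$ already satisfies all the other Kim--Pillay axioms for an independence relation (symmetry by (1), extension, finite character, local character on models, and the independence theorem over models), so imposing base monotonicity promotes $\ind^{K}$ to a Kim--Pillay style independence notion, which by the Kim--Pillay theorem forces $T$ to be simple.

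For witnessing (3) and the lifting lemma (4), the main technical ingredient is again a tree construction combined with a compactness/$s$-indiscernibility extraction. For (3), given $b \ind^{K}_M a$ and $I = \langle a_i : i<\omega\rangle$ an $\ind^{K}$-Morley sequence over $M$ with $a_0 = a$, I would build a tree $(I_\eta)_{\eta \in \omega^{<\omega}}$ of copies of $I$ arranged so that each branch is $\equiv_{Ma} I$, each is $\ind^{K}$-Morley over $M$, and each is simultaneously consistent with $\tp(b/Ma)$; passing to an $s$-indiscernible tree locally based on this one and applying Kim's lemma collapses the tree so that some branch is $Mb$-indiscernible while remaining $\equiv_{Ma} I$. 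For (4), given $a \ind^{K}_M N$, I would first produce an arbitrary $\ind^{K}$-Morley sequence over $N$ starting with $a$ (which is automatically $N$-indiscernible and $\ind^{K}$-Morley over $N$), and then use $a \ind^{K}_M N$ together with extension to adjust the sequence so that it is $\ind^{K}$-Morley over $M$ as well; the compatibility of the two independences is exactly what $a \ind^{K}_M N$ provides at the level of the first term.

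The main obstacle in (3) and (4) is coordinating the Morley condition at one level (or with respect to one parameter set) with an indiscernibility or Morley condition at another level, so that the simultaneous requirements survive the compactness step. In each case the trick is to set up the tree so that both desired properties are preserved under $s$-indiscernibility extraction, and then to invoke Kim's lemma for Kim-dividing to collapse the tree into a genuine Morley sequence with the auxiliary property.
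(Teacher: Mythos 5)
This statement is a \emph{Fact} in the paper and is not proved there: each item carries a citation to \cite{kaplan2017kim} or \cite{kaplan2019transitivity}, so there is no in-paper argument to compare your sketch against. What you have produced is an outline of the proofs in the cited references, not of anything this paper does.

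As sketches of the external proofs, items (2), (3), and (4) are in the right spirit: the forward direction of (2) from the coincidence of $\ind^{K}$ with $\ind^{f}$ over models in simple theories, the converse via a Kim--Pillay style criterion (using that $\ind^{K}$ already enjoys the remaining axioms over models), and the tree-building plus $s$-indiscernible extraction method for (3) and (4). However, there is a circularity in your framing of item (1). You present ``Kim's lemma for Kim-dividing,'' stated in terms of $\ind^{K}$-Morley sequences, as the ``common combinatorial input'' that ``makes the tree arguments below work,'' including for symmetry. But that version of Kim's lemma is itself a theorem of \cite{kaplan2019transitivity} whose proof already uses symmetry. The version of Kim's lemma available before symmetry is established is the one from \cite{kaplan2017kim}, phrased in terms of Morley sequences in \emph{global $M$-invariant types} (equivalently, in $\ind^{u}$-Morley sequences), and the actual symmetry proof there runs through that invariant-type Kim's lemma together with a chain-condition argument. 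So the lemma you cite as your starting point cannot serve as input to (1); it is a downstream consequence. This does not undermine your sketches of (3) and (4), which genuinely do live after symmetry, but as written your outline of (1) rests on a result that logically comes later.
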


Note that the following proposition does not require treelessness:

\begin{prop} \label{treeless kimforking}
If $T$ is simple, then $\ind^{\mathrm{GS}}$ coincides with the usual non-forking independence. If $T$ is NSOP$_1$ and $M$ is a model, then $\ind^{\mathrm{GS}}_M$ coincides with Kim-independence over $M$. In particular, symmetry holds in these cases.
\end{prop}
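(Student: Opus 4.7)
The plan is to establish the NSOP$_1$ statement over models first, then deduce the simple case by bootstrapping. For NSOP$_1$, let $M \vDash T$; I would show $a \ind^{\mathrm{GS}}_M b \iff a \ind^K_M b$.

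For the direction $a \ind^K_M b \implies a \ind^{\mathrm{GS}}_M b$: by symmetry of Kim-independence (\cref{Kim-independence facts}(1)), $b \ind^K_M a$, and by the witnessing property (\cref{Kim-independence facts}(3)) there is a $\ind^K$-Morley sequence $I = (b_i : i < \omega)$ over $M$ with $b_0 = b$ which is indiscernible over $Ma$. Let $\pi = \pi_p$ denote the maximal generically stable partial type over $M$ extending $p = \text{tp}(b/M)$ from \cref{maximal extension}. The key step is to argue that $I$ is compatible with the Morley structure of $\pi$ over $M$: given any $\varphi(x;a) \in \pi$, generic stability forces $\varphi$ to hold on cofinitely many elements of any Morley sequence of $\pi$ over $M$, and combining this with the $Ma$-indiscernibility of $I$ (and an argument identifying $I$'s type over $Ma$ with that of a genuine $\pi$-Morley sequence, possibly via the lifting lemma \cref{Kim-independence facts}(4)) yields $\vDash \varphi(b_i;a)$ for all $i$, in particular for $i=0$, so $b \vDash \pi|_{Ma}$.

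For $a \ind^{\mathrm{GS}}_M b \implies a \ind^K_M b$: I argue the contrapositive. If $a \nind^K_M b$, then by symmetry $b \nind^K_M a$, so some $\varphi(y;a) \in \text{tp}(b/Ma)$ Kim-divides over $M$. By Kim's lemma for NSOP$_1$, every coheir Morley sequence $(a_i)_{i<\omega}$ of $\text{tp}(a/M)$ has $\{\varphi(y;a_i) : i < \omega\}$ $k$-inconsistent for some $k$. The plan is to exploit this inconsistency, together with the closure properties of generically stable partial types from \cref{lem:gen_stable_amalgam} and \cref{prop:gen_stable_restriction} (particularly the generically stable quantification given by \cref{prop_gsquant}), to construct a generically stable partial type $\pi \supseteq \text{tp}(b/M)$ over $M$ entailing $\neg\varphi(y;a)$, witnessing $b \not\vDash \pi|_{Ma}$. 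I expect this construction to be the main obstacle: converting the abstract Kim-dividing witness into a concrete generically stable type requires delicate use of the ind-definability and finite satisfiability properties.

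For the simple case, since simple theories are NSOP$_1$ and $\ind^K_M = \ind^f_M$ over models by \cite[Proposition 8.4]{kaplan2017kim}, the NSOP$_1$ result gives $\ind^{\mathrm{GS}}_M = \ind^f_M$ for every $M$. To extend to an arbitrary base $A$: given tuples $a, b$, pick (by extension in simple) a model $M \supseteq A$ with $M \ind^f_A ab$. Then $a \ind^f_A b \iff a \ind^f_M b$ by the standard simple-theory arguments (base monotonicity and transitivity of $\ind^f$), and a parallel reduction $a \ind^{\mathrm{GS}}_A b \iff a \ind^{\mathrm{GS}}_M b$ follows from \cref{lem:nf implies GS} (which yields $M \ind^{\mathrm{GS}}_A ab$) combined with left transitivity and monotonicity in \cref{GS props}. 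Symmetry of $\ind^{\mathrm{GS}}$ in each case is then immediate from symmetry of $\ind^f$ and $\ind^K$ respectively.
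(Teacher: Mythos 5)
Your overall plan—prove the NSOP$_1$ case over models, then handle simple theories—is the right shape, but there are genuine gaps at each step.

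For the direction $a \ind^K_M b \Rightarrow a \ind^{\mathrm{GS}}_M b$, you take a $\ind^K$-Morley sequence $I = (b_i)$ over $M$ that is $Ma$-indiscernible and try to identify its type with that of a Morley sequence in $\pi_p$, the maximal generically stable type over $M$ extending $\mathrm{tp}(b/M)$. There is no reason for this: being $\ind^K$-Morley is a much weaker condition than being $\pi_p$-Morley, and the inclusion only goes the other way. (Indeed, producing a $\pi_p$-Morley sequence \emph{starting with $b$ and $Ma$-indiscernible} already amounts to knowing $b \vDash \pi_p|_{Ma}$.) The lifting lemma does not resolve this. The paper instead argues by contradiction in the \emph{opposite} direction: assuming $a \nind^{\mathrm{GS}}_M b$ with witness $\pi(y)$, one takes a $\ind^{\mathrm{GS}}$-Morley sequence $(b_i)$ in $\mathrm{tp}(b/M)$—this \emph{is} a $\pi$-Morley sequence since $\pi \subseteq \pi_*$—and, using the already-proved implication $\ind^{\mathrm{GS}} \Rightarrow \ind^K$, notes it is also $\ind^K$-Morley. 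Kim's lemma together with $a\ind^K_M b$ then realizes $\bigwedge_i p(x,b_i)$ where $p=\mathrm{tp}(a,b/M)$, contradicting generic stability of $\pi$. You have the two sides of the Morley-sequence comparison reversed.

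For the direction $a \ind^{\mathrm{GS}}_M b \Rightarrow a \ind^K_M b$, you correctly diagnose that a generically stable partial type over $M$ must be produced which forbids every Kim-dividing formula, but you do not carry out the construction, and the toolkit you reach for (\cref{lem:gen_stable_amalgam}, \cref{prop:gen_stable_restriction}, \cref{prop_gsquant}) is not what is actually used. The paper simply writes down $\pi(x) = p(x) \cup \{\neg\varphi(x;c) : \varphi(x;c)\cup p(x)\text{ Kim-divides over }M\}$, verifies ind-definability via \cref{lem_def} using symmetry and the witnessing property (\cref{Kim-independence facts}(3)), and verifies (GS) using the dual $k$-inconsistency of Kim-dividing (\cite[Remark 5.3]{kaplan2019transitivity}). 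No amalgamation or existential-quantification lemmas enter.

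Finally, the bootstrap from models to arbitrary bases in the simple case is circular. Choosing $M \supseteq A$ with $M\ind^f_A ab$, the reduction you need to make the $\ind^{\mathrm{GS}}$ side match the $\ind^f$ side is $a \ind^{\mathrm{GS}}_A b \Rightarrow a \ind^{\mathrm{GS}}_M b$. Left transitivity plus \cref{lem:nf implies GS} gives the \emph{other} direction ($a\ind^{\mathrm{GS}}_M b \Rightarrow a\ind^{\mathrm{GS}}_A b$), but the direction you need is a form of base monotonicity for $\ind^{\mathrm{GS}}$, which is not available in general (it is exactly what \cref{sym+basemono} establishes under treelessness); deriving it from $\ind^{\mathrm{GS}}_M = \ind^f_M$ uses the statement being proved. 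The paper sidesteps this entirely: in a simple theory, the very same direct construction works verbatim over an arbitrary base $A$, replacing Kim-dividing by dividing and $\ind^K$ by $\ind^f$, since symmetry, Kim's lemma for dividing, and the needed witnessing all hold over arbitrary sets in simple theories. No reduction to a model is required.
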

\begin{proof}
We argue in the NSOP$_1$ case; the proof in the simple case is the same, except that we can drop the assumption that $M$ is a model (in that case Kim-independence is just the usual forking-independence). Let $M$ be a model and consider a tuple $b$. Let $p=\mathrm{tp}(b/M)$. Consider the partial type \[\pi(x) = p(x) \cup \{\neg \phi(x;c):c\in \mathbb{M}, \phi(x;c)\cup p(x)\text{ Kim-divides over }M\}.\] 
This partial type is $M$-invariant. To see that it is ind-definable over $M$, by \cref{lem_def}, we have to argue that the set 
$$
X = \{(a,\overline{b}) : \overline{b} \in \mathbb{M}^{\omega}, a \vDash \pi|_{M\overline{b}}\}
$$
is type-definable over $M$.  Fix $I = (a_{i})_{i < \omega}$, any coheir sequence over $M$ in $p$.  Let $q(x_{0},x_{1}, \ldots) = \text{tp}((a_{i})_{i < \omega}/M)$.  Notice that, if $(a,\overline{b}) \in X$, then, since $a \equiv_{M} a_{0}$, there is $I' = (a'_{i})_{i < \omega}$ with $I' \equiv_{M} I$ and $a'_{0} = a$.  By symmetry and the definition of $\pi$, $\overline{b} \ind^{K}_{M} a$ and hence there is $I'' \equiv_{Ma} I'$ which is $M\overline{b}$-indiscernible.  This shows that if $(a,\overline{b}) \in X$, then there is $I'' \vDash q$ which starts with $a$ and which is $M\overline{b}$ indiscernible.  On the other hand, if there is some $J \vDash q$ which starts with $a$ and which is $M\overline{b}$-indiscernible, then by symmetry and Kim's lemma, $a \ind^{K}_{M} \overline{b}$ so $(a,\overline{b}) \in X$.  This shows that $(a,\overline{b}) \in X$ if and only if 
$$
(\exists \overline{x}) \left[ \overline{x} \vDash q \wedge x_{0} = a \wedge \overline{x} \text{ is }M\overline{b}\text{-indiscernible}\right],
$$
which shows $X$ is type-definable over $M$.

Additionally, $\pi$ is generically stable:  if $\langle a_{i} : i < \omega \rangle$ is a sequence with $a_{i} \vDash \pi|_{M a_{<i}}$ for all $i < \omega$, then we have $a_{i} \ind^{K}_{M} a_{<i}$.  Suppose $\neg \varphi(x;b) \in \pi$, so $\varphi(x;b)$ Kim-divides over $M$ and we must show that $\vDash \neg \varphi(a_{i};b)$ for all but finitely many $i$.  If not, then, after throwing away a co-infinite set, we may assume $\vDash \varphi(a_{i};b)$ for all $i < \omega$. However, by symmetry, $\varphi(a_{0};y) \cup \mathrm{tp}_{y}(b/M)$ Kim-divides over $M$ and, thus, by \cite[Remark 5.3]{kaplan2019transitivity}, $\{\varphi(a_{i};y) : i < \omega\} \cup \text{tp}_{y}(b/M)$ is $k$-inconsistent for some $k$, a contradiction.  This establishes that $\pi$ is generically stable so $a \ind^{\mathrm{GS}}_{M} b$ implies $a \ind^{K}_{M} b$ (by symmetry and the definition of $\pi$). 

For the converse, assume that $a\ind^K_M b$ and for a contradiction that $a\nind^{\mathrm{GS}}_M b$. Set $p(x,y)=\mathrm{tp}(a,b/M)$. Let $\pi(y)$ be generically stable over $M$, consistent with $\mathrm{tp}(b/M)$, but not with $\mathrm{tp}(b/Ma)$. Take $(b_i:i<\omega)$ a $\ind^{\mathrm{GS}}$-Morley sequence in $\mathrm{tp}(b/M)$. Then by the first part of the proof, it is a $\ind^K$-Morley sequence. As $a\ind^K_M b$, $\bigwedge p(x,b_i)$ is consistent. This contradicts generic stability of $\pi$.
\end{proof}

%

\begin{thm} \label{nsop1 to simple}
Suppose $T$ is a treeless NSOP$_{1}$ theory.  Then $T$ is simple.  
\end{thm}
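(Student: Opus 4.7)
The plan is to prove this as a direct synthesis of three results already established in the paper: the coincidence of $\ind^{\mathrm{GS}}$ with Kim-independence over models in NSOP$_{1}$ theories (\cref{treeless kimforking}), base monotonicity of $\ind^{\mathrm{GS}}$ in treeless theories (\cref{sym+basemono}), and the characterization (\cref{Kim-independence facts}(2)) that an NSOP$_{1}$ theory is simple if and only if Kim-independence over models satisfies base monotonicity.

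Concretely, I would verify the base monotonicity condition required by \cref{Kim-independence facts}(2). Suppose $M \preceq N \vDash T$ and $a \ind^{K}_{M} Nb$; the goal is to show $a \ind^{K}_{N} b$. Since $M$ is a model and $T$ is NSOP$_{1}$, \cref{treeless kimforking} gives $a \ind^{\mathrm{GS}}_{M} Nb$. Now invoke treelessness: by \cref{sym+basemono}, $\ind^{\mathrm{GS}}$ satisfies base monotonicity in treeless theories, and adding the parameters of $N$ (which contains $M$) to the base yields $a \ind^{\mathrm{GS}}_{N} b$. Since $N$ is also a model, a second application of \cref{treeless kimforking} converts this back into $a \ind^{K}_{N} b$, as desired. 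Then \cref{Kim-independence facts}(2) delivers simplicity of $T$.

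All of the heavy lifting has been done in the preceding sections, so there is no significant obstacle here beyond correctly assembling these ingredients; the one point that warrants a brief sanity check is that the base of the GS-independence remains a model after passing from $M$ to $N$, which is precisely what is needed to re-apply \cref{treeless kimforking}. It is worth remarking in the proof that the whole argument is driven by the fact that, in the treeless NSOP$_{1}$ setting, the model-theoretic content of Kim-independence is entirely captured by generically stable partial types, so the structural property of base monotonicity for $\ind^{\mathrm{GS}}$ (a consequence of treelessness via \cref{useful prop}) transfers immediately to $\ind^{K}$, giving simplicity.
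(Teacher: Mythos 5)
Your proposal is correct and follows essentially the same route as the paper: the paper's proof is a one-liner that simply cites \cref{treeless kimforking}, \cref{sym+basemono}, and \cref{Kim-independence facts}(2), and your argument spells out exactly how those three ingredients chain together (translate $\ind^K$ to $\ind^{\mathrm{GS}}$ over $M$, apply base monotonicity to enlarge the base to $N$, translate back to $\ind^K$ over $N$). No gaps, no divergence.
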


\begin{proof}
As $T$ is NSOP$_{1}$, we have, by \cref{treeless kimforking}, that $\ind^{K} = \ind^{\mathrm{GS}}$ over models.  By \cref{sym+basemono}, the treelessness of $T$ implies $\ind^{\mathrm{GS}}$ satisfies base monotonicity. 
\end{proof}

\begin{quest}
If $T$ is NSOP$_1$, is $\ind^{\mathrm{GS}}$ symmetric over an extension base, or even over an arbitrary base (see also \cref{que:symmetry})?
\end{quest}

\subsection{A quick alternate proof for binary NSOP$_{1}$ theories}

In this subsection, we give a short alternative proof that binary NSOP$_{1}$ theories are simple. This, of course, follows from \cref{nsop1 to simple} but admits a direct proof using established facts about Kim-independence.  The proof is short and different enough that we thought it worthwhile to include. 

\begin{lem} \label{sequence lemma}
Suppose $T$ is binary, $C$ is a set of parameters, and $I = \langle a_{i} : i < \omega \rangle$ and $J = \langle b_{i} : i < \omega \rangle$ are $C$-indiscernible sequences with $a_{0} = b_{0}$ and $I \equiv J$.  Then $I \equiv_{C} J$.   
\end{lem}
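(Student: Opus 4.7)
The plan is to exploit the defining feature of a binary theory: the type of any tuple is determined by the types of its $2$-subtuples. So once we reduce to $2$-types, the hypotheses $I\equiv J$, $C$-indiscernibility of both sequences, and $a_{0}=b_{0}$ combine almost immediately.

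More concretely, let $\overline{a}=(a_{i_{0}},\ldots,a_{i_{n-1}})$ be any finite subtuple of $I$, let $\overline{b}=(b_{i_{0}},\ldots,b_{i_{n-1}})$ be the corresponding subtuple of $J$, and let $\overline{c}=(c_{0},\ldots,c_{m-1})$ be any finite tuple from $C$. By binarity it suffices to show that every pair of coordinates in $(\overline{a},\overline{c})$ has the same type as the corresponding pair in $(\overline{b},\overline{c})$. There are three cases. First, pairs inside $C$ are identical. Second, a pair $(a_{i_{k}},a_{i_{l}})$ with $k<l$ has, by indiscernibility of $I$, the same type as $(a_{0},a_{1})$, and likewise $(b_{i_{k}},b_{i_{l}})$ has the same type as $(b_{0},b_{1})$; these agree because $I\equiv J$. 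Third, for a mixed pair $(a_{i_{k}},c_{j})$, the $C$-indiscernibility of $I$ gives $\mathrm{tp}(a_{i_{k}},c_{j})=\mathrm{tp}(a_{0},c_{j})$, and symmetrically $\mathrm{tp}(b_{i_{k}},c_{j})=\mathrm{tp}(b_{0},c_{j})$, and since $a_{0}=b_{0}$ these two are equal.

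Piecing the cases together via binarity yields $\mathrm{tp}(\overline{a}\overline{c})=\mathrm{tp}(\overline{b}\overline{c})$ for every finite $\overline{a}$ and $\overline{c}$, which is exactly $I\equiv_{C}J$. No obstacle really arises: the only thing one must be a little careful about is that the binarity reduction is applied to $(\overline{a},\overline{c})$ with the parameters from $C$ treated as additional coordinates, which is legitimate because binarity is a property of the theory and applies to all tuples including those involving named parameters. In short, the statement is a clean instance of ``binary $+$ $2$-type agreement implies full type agreement.''
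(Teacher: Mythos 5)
Your proposal is correct and follows essentially the same route as the paper's proof: reduce to $2$-types via binarity and check the three kinds of pairs (within $C$, within a sequence, and mixed) using indiscernibility together with $I\equiv J$ and $a_0=b_0$. The paper is slightly more explicit about decomposing each $a_i$ into its individual coordinates before invoking binarity (which also handles pairs of coordinates inside a single $a_{i_k}$), but your argument covers this implicitly since $\mathrm{tp}(a_{i_k})=\mathrm{tp}(b_{i_k})$ already follows from indiscernibility and $a_0=b_0$.
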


\begin{proof}
We may write $a_{i} = (a_{i,0},\ldots, a_{i,m-1})$ and likewise for $b_{i}$, for all $i < \omega$.  If $c \in C$, then, since $I$ and $J$ are $C$-indiscernible sequences starting with $a_{0} = b_{0}$, we have 
$$
a_{i,j}c \equiv a_{0,j}c \equiv b_{0,j}c \equiv b_{i,j}c,
$$
for all $i < \omega$ and $j < m$.  Since $I \equiv J$, it follows that any pair of elements selected from $IC$ will have the same type as the corresponding pair from $JC$, and thus, by binarity, $IC \equiv JC$.  This shows $I \equiv_{C} J$.  
\end{proof}

\begin{thm}
Suppose $T$ is binary and NSOP$_{1}$.  Then $T$ is simple.  
\end{thm}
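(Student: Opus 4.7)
The plan is to establish base monotonicity of Kim-independence directly; by \cref{Kim-independence facts}(2), this implies simplicity in any NSOP$_{1}$ theory. So I would assume $M \preceq N$ and $a \ind^{K}_{M} Nb$, aiming to conclude $a \ind^{K}_{N} b$.

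First I would apply the lifting lemma (\cref{Kim-independence facts}(4)) to $a \ind^{K}_{M} N$, producing an $N$-indiscernible sequence $I = \langle a_{i} : i < \omega \rangle$ with $a_{0} = a$ that is $\ind^{K}$-Morley over both $M$ and $N$. By symmetry we have $Nb \ind^{K}_{M} a$, so the witnessing criterion (\cref{Kim-independence facts}(3)) applied over $M$ supplies a sequence $I' \equiv_{Ma} I$ which is $Nb$-indiscernible.

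The heart of the argument is the use of binarity. Both $I$ and $I'$ are $N$-indiscernible ($I'$ inheriting this from $Nb$-indiscernibility), share the same first element $a$, and satisfy $I \equiv I'$ as a consequence of $I' \equiv_{Ma} I$. Applying \cref{sequence lemma} then gives $I \equiv_{N} I'$. Since being an $\ind^{K}$-Morley sequence over $N$ is preserved under $N$-conjugacy, $I'$ is such a sequence as well, starts with $a$, and is $Nb$-indiscernible. The reverse direction of witnessing over $N$ then yields $b \ind^{K}_{N} a$, and symmetry gives $a \ind^{K}_{N} b$.

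I expect the main obstacle to be precisely the transition from the $M$-based witness produced by witnessing over $M$ to an $N$-based witness needed for witnessing over $N$. Binarity is exactly what enables this step: without it, the type equality $I \equiv I'$ together with shared first element and $N$-indiscernibility would not suffice to upgrade to $I \equiv_{N} I'$, and the argument would fail at this juncture.
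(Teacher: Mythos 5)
Your argument is correct and follows the paper's own proof step for step: reduce to base monotonicity via \cref{Kim-independence facts}(2), invoke the lifting lemma and witnessing to produce the two $N$-indiscernible sequences through $a$, apply \cref{sequence lemma} to upgrade $I \equiv I'$ to $I \equiv_{N} I'$, and close with witnessing over $N$ and symmetry. There is no meaningful difference in decomposition or in the role played by binarity.
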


\begin{proof}
By \cref{Kim-independence facts}(2), it suffices to show that $\ind^{K}$ satisfies base monotonicity.  So fix $M \prec N \vDash T$ and assume $a \ind^{K}_{M} Nb$.  We must show $a \ind^{K}_{N} b$.  By symmetry (\cref{Kim-independence facts}(1)), we have $bN \ind^{K}_{M} a$ and it suffices to establish $b \ind^{K}_{N} a$.  

As $a \ind^{K}_{M} N$, there is a sequence $I = \langle a_{i} : i < \omega \rangle$ with $a_{0} = a$ which is simultaneously $\ind^{K}$-Morley over $M$ and over $N$, by \cref{Kim-independence facts}(4).  Because $bN \ind^{K}_{M} a$, there is $J \equiv_{Ma} I$ such that $J$ is $Nb$-indiscernible.  By \cref{sequence lemma}, since $I \equiv_{M} J$ and $I$ and $J$ are both $N$-indiscernible, starting with $a$, we have $I \equiv_{N} J$, from which it follows that $J$ is $\ind^{K}$-Morley over $N$ as well.  By \cref{Kim-independence facts}(3), this shows $b \ind^{K}_{N} a$, completing the proof.  
\end{proof}

\section{From \texorpdfstring{NSOP$_{3}$}{NSOP3} to \texorpdfstring{NSOP$_{2}$}{NSOP2}} \label{nsop3 section}

In this section, we show that treeless NSOP$_{3}$ theories with trivial indiscernibility are NSOP$_{2}$.  Trivial indiscernibility is a weak form of binarity introduced in \cite{braunfeld2021characterizations}.  Because binary theories are always treeless, our results show, in particular, that binary NSOP$_{3}$ theories are necessarily NSOP$_{2}$.  

\begin{defn}
The properties SOP$_{2}$ and SOP$_{3}$ are defined as follows:
\begin{enumerate}
\item The theory $T$ has SOP$_{2}$ if there is a formula $\varphi(x;y)$ and a collection of tuples $(a_{\eta})_{\eta \in \omega^{<\omega}}$ satisfying the following:
\begin{itemize}
\item For all $\eta \perp \nu \in \omega^{<\omega}$, $\{\varphi(x;a_{\eta}), \varphi(x;a_{\nu})\}$ is inconsistent.
\item For all $\eta \in \omega^{\omega}$, $\{\varphi(x;a_{\eta | i }): i < \omega\}$ is consistent.
\end{itemize}
\item The theory $T$ has SOP$_{3}$ if there are formulas $\psi_{0}(x;y)$ and $\psi_{1}(x;y)$ satisfying the following:
\begin{itemize}
\item For all $k < \omega$, 
$$
\{\psi_{0}(x;a_{i}) : i \leq k\} \cup \{\psi_{1}(x;a_{j}) : j > k\}
$$
is consistent. 
\item For all $i < j$, $\{\psi_{1}(x;a_{i}), \psi_{0}(x;a_{j})\}$ is inconsistent. 
\end{itemize}
\end{enumerate}
\end{defn}

\begin{rem}
We have defined SOP$_{2}$ and SOP$_{3}$ in the form most convenient for us to use them, though the equivalence of SOP$_{3}$ defined here with its usual definition can be found in \cite[Claim 2.19]{shelah1995toward}. 
\end{rem}

In the end, we did not use the following description of SOP$_{3}$, but we found the reformulation of SOP$_{3}$ in terms of detecting disjointness of intervals to be useful at the level of intuition and so decided to include it.  

\begin{lem}
Let $\mathcal{I}$ denote the set of all non-empty closed intervals in $[0,1]$.  The following are equivalent:
\begin{enumerate}
\item $T$ has SOP$_{3}$.
\item There is a formula $\varphi(x;y)$ and a collection of tuples $(b_{I})_{I \in \mathcal{I}}$ such that, for any family $\mathcal{J} \subseteq \mathcal{I}$ consisting of intervals with all endpoints distinct, 
$$
\{\varphi(x;b_{I}) : I \in \mathcal{J}\} \text{ is consistent }\iff \bigcap \mathcal{J} \neq \emptyset.  
$$
\end{enumerate}
\end{lem}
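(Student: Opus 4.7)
The plan is to prove the equivalence by directly constructing, in each direction, the desired object from the hypothesized one, with the key move being the reinterpretation of SOP$_3$'s ``$\psi_0/\psi_1$ pair'' as a ``left-interval/right-interval pair''.

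For (1) $\implies$ (2), I would first apply Ramsey's theorem and compactness to replace the SOP$_3$-witnessing sequence $(a_i)_{i<\omega}$ by a $[0,1]$-indexed indiscernible $(a_r)_{r \in [0,1]}$ with the same EM-type. A short indiscernibility argument, combined with the inconsistency clause of SOP$_3$, then yields the cleanest possible dichotomy: for any disjoint finite $F_0, F_1 \subseteq [0,1]$, the partial type $\{\psi_0(x;a_r) : r \in F_0\} \cup \{\psi_1(x;a_s) : s \in F_1\}$ is consistent if and only if $\max F_0 < \min F_1$. Then for each non-degenerate $I = [\alpha,\beta]$ I set $b_I := (a_\alpha, a_\beta)$ and $\varphi(x;y_0,y_1) := \psi_0(x;y_0) \wedge \psi_1(x;y_1)$. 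For a family $\mathcal{J}$ with all endpoints distinct, the set of $\alpha_I$'s and the set of $\beta_I$'s are disjoint, so the dichotomy above reduces consistency of $\{\varphi(x;b_I):I\in\mathcal{J}\}$ to $\max_I \alpha_I < \min_I \beta_I$, which is exactly $\bigcap \mathcal{J} \neq \emptyset$ in the distinct-endpoints regime.

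For (2) $\implies$ (1), the idea is to encode ``$x$ is to the right of $r_i$'' and ``$x$ is to the left of $r_i$'' by two carefully perturbed closed intervals. Pick a strictly increasing sequence $(r_i)_{i<\omega} \subseteq (1/3, 2/3)$, and sequences of pairwise distinct reals $(1-\mu_i)_{i<\omega} \subseteq (2/3,1)$ and $(\nu_i)_{i<\omega} \subseteq (0,1/3)$, with all three sets mutually disjoint. Let $L_i := [r_i, 1-\mu_i]$, $R_i := [\nu_i, r_i]$, $a_i := (b_{L_i}, b_{R_i})$, and set $\psi_0(x;y_0,y_1) := \varphi(x;y_0)$, $\psi_1(x;y_0,y_1) := \varphi(x;y_1)$. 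For $i<j$, the two-element family $\{R_i, L_j\}$ has four globally distinct endpoints, and $R_i \cap L_j = \emptyset$ since $r_i < r_j$, so (2) gives inconsistency of $\{\psi_1(x;a_i), \psi_0(x;a_j)\}$. For the consistency clause, a finite subfamily of $\{L_i : i \leq k\} \cup \{R_j : j > k\}$ indexed by $F_0 \subseteq \{0,\ldots,k\}$, $F_1 \subseteq \{k+1,k+2,\ldots\}$ has distinct endpoints by construction, and its intersection is $[r_{\max F_0}, r_{\min F_1}]$, which is non-empty because $\max F_0 \leq k < k+1 \leq \min F_1$; (2) then yields finite consistency, and compactness delivers the full SOP$_3$ consistency clause.

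The main obstacle is bookkeeping around the distinct-endpoint hypothesis in (2). In the direction (1) $\implies$ (2) it is mild, since the hypothesis on $\mathcal{J}$ is exactly what is needed to get disjointness of the parameter index sets and land in the clean regime $\max F_0 \neq \min F_1$. In the direction (2) $\implies$ (1) it is the genuinely finicky step: one must engineer the intervals $L_i, R_i$ so that every subfamily actually invoked in verifying SOP$_3$ has globally distinct endpoints, while simultaneously preserving the pattern $R_i \cap L_j = \emptyset \iff i < j$ and the intersection computation above. The ``three thirds'' partition is one clean way to do this, isolating the $r_i$, which encode the underlying linear order, from the perturbations $\mu_i, \nu_i$, whose sole purpose is to prevent endpoint coincidences.
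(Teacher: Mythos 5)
Your proposal is correct and follows essentially the same strategy as the paper's proof in both directions: for (1)$\implies$(2) you stretch the SOP$_3$ witness to a $[0,1]$-indexed indiscernible sequence and pair $\psi_0,\psi_1$ into a single formula applied to endpoint pairs $b_{[\alpha,\beta]}=(a_\alpha,a_\beta)$, exactly as the paper does with its $\chi(x;y,z)=\varphi(x;y)\wedge\psi(x;z)$; for (2)$\implies$(1) you encode ``left/right of $r_i$'' by a pair of intervals in the left and right thirds pivoting around $r_i$, which is the same device the paper uses with $c_k=(b_{[\frac13+k,\frac23+k]},b_{[k,\frac13+k]})$. The only difference is that you make the perturbations $\nu_i,\mu_i$ explicit and arbitrary rather than using the paper's shift-by-$k$ parameterization, which is a cosmetic variant of the same ``three thirds'' idea.
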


\begin{proof}
(1)$\implies$(2).  By compactness, there are formulas $\varphi(x;y)$ and $\psi(x;y)$ and an indiscernible sequence $(a_{i})_{i \in [0,1]}$ such that:
\begin{itemize}
\item For all $k \in [0,1]$, 
$$
\{\varphi(x;a_{i}) : i \in [0,k] \} \cup \{\psi(x;a_{i}) : i \in (k,1]\}
$$
is consistent.
\item For all $i < j \in [0,1]$, 
$$
\{\psi(x;a_{i}), \varphi(x;a_{j})\}
$$
is inconsistent.  
\end{itemize}
Define a formula $\chi(x;y,z) = \varphi(x;y) \wedge \psi(x;z)$.  For each $I = [i,j] \in \mathcal{I}$, let $b_{I} = (a_{i},a_{j})$.  Suppose $\mathcal{J} = \{I_{\alpha} = [i_{\alpha},j_{\alpha}] : \alpha < \beta \} \subseteq \mathcal{I}$ is a family of intervals with all endpoints distinct. If $\bigcap \mathcal{J} \neq \emptyset$, then, for all $\alpha, \alpha' < \beta$, we have $i_{\alpha} < j_{\alpha'}$, and hence 
$$
\{\varphi(x;a_{i_{\alpha}}) : \alpha < \beta\} \cup \{\psi(x;a_{j_{\alpha}}) : \alpha < \beta\}
$$
is consistent.  It follows that $\{\chi(x;b_{I}) : I \in \mathcal{J}\}$ is consistent.  

Conversely, if $\bigcap \mathcal{J}  = \emptyset$, then there are disjoint closed intervals $I,I' \in \mathcal{J}$.  Without loss of generality, $I = [i,j]$, $I' = [i',j']$ and $j < i'$.  Then $\{\psi(x;a_{j}),\varphi(x;a_{i'})\}$ is inconsistent, from which it follows that $\{\chi(x;b_{I}), \chi(x;b_{I'})\}$ is inconsistent.  

(2)$\implies$(1).  For each $k \in (0,\frac{1}{3})$, let $c_{k} = (b_{[\frac{1}{3}+k,\frac{2}{3}+k]},b_{[k,\frac{1}{3}+k]})$ and define $\psi_{0}(x;y,z) = \varphi(x;y)$ and $\psi_{1}(x;y,z) = \varphi(x;z)$.  Notice, then, that if $k \in (0,\frac{1}{3})$, 
$$
\bigcap_{i \in (0,k]}\left[\frac{1}{3}+i,\frac{2}{3}+i\right] \cap \bigcap_{i \in (k,\frac{1}{3})} \left[i,\frac{1}{3}+i\right] \neq \emptyset,
$$
so $\{\psi_{0}(x;c_{i}) : i \in (0, k]\} \cup \{\psi_{1}(x;c_{i}) : i \in (k,\frac{1}{3})\}$ is consistent.  On the other hand, if $i < j$ are numbers in $(0,\frac{1}{3})$, then $[i,\frac{1}{3}+i] \cap [\frac{1}{3}+j,\frac{2}{3}+j] = \emptyset$ so $\{\psi_{0}(x;c_{j}), \psi_{1}(x;c_{i})\}$ is inconsistent.  Thus, we obtain SOP$_{3}$.  
\end{proof}

\begin{lem} \label{indiscernibility}
Suppose $T$ is treeless.  Suppose $(a_{\eta})_{\eta \in \mathcal{T}}$ is treetop indiscernible and $\nu \in \mathcal{T}_{-}$.  Let 
\begin{eqnarray*}
J &=& \{\eta \in \mathcal{T}_{+} : \eta \wedge \nu \vartriangleleft \nu, \eta <_{lex} \nu\} \\
J' &=& \{\eta \in \mathcal{T}_{+} : \eta \wedge \nu \vartriangleleft \nu, \nu <_{lex} \eta\}
\end{eqnarray*}  
Then $(a_{\eta})_{\eta \in J}$ and $(a_{\eta})_{\eta \in J'}$ are $a_{\nu}$-indiscernible sequences (with $J$ and $J'$ linearly ordered by $<_{lex}$).  
\end{lem}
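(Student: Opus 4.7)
Plan: The plan is to deduce the indiscernibility of $(a_{\eta})_{\eta \in J}$ over $a_{\nu}$ from the treelessness assumption by constructing an auxiliary $\omega^{\leq\omega}$-indexed treetop indiscernible in which $a_{\nu}$ plays the role of the root and the leaves track the data of $(a_{\eta})_{\eta \in J}$, then invoking the equivalent form (3) of treelessness (leaves indiscernible over the root) and transferring back via local basedness. The case of $J'$ is entirely symmetric (swapping left with right).

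Concretely, I would first isolate the substantive content. By treetop indiscernibility, the type of $(a_{\bar\eta}, a_{\nu})$ for an increasing tuple $\bar\eta \in J^{n}$ is determined by $\mathrm{qftp}_{L_{0,P}}(\bar\eta, \nu)$, so the lemma is immediate whenever two increasing tuples $\bar\eta, \bar\eta' \in J^{n}$ produce identical such qftps (e.g.\ fans branching off $\nu$'s spine at a common point). The real content is that even when $\mathrm{qftp}_{L_{0,P}}(\bar\eta,\nu) \neq \mathrm{qftp}_{L_{0,P}}(\bar\eta',\nu)$ (for instance, a configuration where the elements of $\bar\eta$ branch off $\nu$'s spine at a single common point versus one where they branch off at different points on the spine), the types over $a_{\nu}$ must still coincide; this is exactly what treelessness should force.

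To establish this, I would define an assignment $(c_{\eta})_{\eta \in \omega^{\leq\omega}}$ by setting $c_{\emptyset} = a_{\nu}$, choosing a suitable map $\pi : \omega^{\omega} \to J$ and putting $c_{\eta} = a_{\pi(\eta)}$ for leaves $\eta \in \omega^{\omega}$ (with arbitrary choices at interior non-root nodes), and then invoke the modeling property for treetop indiscernibles established in \cref{coloring lemma} to extract a treetop indiscernible $(b_{\eta})_{\eta \in \omega^{\leq\omega}}$ locally based on $(c_{\eta})_{\eta}$. Applying treelessness to $(b_{\eta})_{\eta}$ in the equivalent form (3) yields that $(b_{\eta})_{\eta \in \omega^{\omega}}$ is an indiscernible sequence over $b_{\emptyset} = a_{\nu}$. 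One then transfers this indiscernibility back to $(a_{\eta})_{\eta \in J}$ via the local basedness of $(b_{\eta})$ on $(c_{\eta})$, recovering the desired uniformity of $\mathrm{tp}(a_{\bar\eta}/a_{\nu})$ across all increasing $\bar\eta \in J^{n}$.

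The main obstacle is arranging the map $\pi$ so that the transfer step actually works: we need $\pi$ to be rich enough that every relevant $\mathcal{T}$-qftp of an increasing tuple $(\bar\eta, \nu)$ with $\bar\eta \in J^{n}$ appears as the image under $\pi$ of some increasing tuple in $(\omega^{\omega})^{n}$ with appropriate $\omega^{\leq\omega}$-qftp, so that local basedness propagates the indiscernibility of $(b_{\eta})$ over $b_{\emptyset}$ to every type of the form $\mathrm{tp}(a_{\bar\eta}, a_{\nu})$ for $\bar\eta \in J^{n}$ increasing. This should be achievable by exploiting the homogeneity of $\mathcal{T}$ as the Fra\"{\i}ss\'e limit of $\mathbb{K}_{0,P}$ together with compactness, so that any finite configuration we wish to realize inside $J$ can be hit in the image of $\pi$. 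If this richness is secured, the combination of the indiscernibility of $(b_{\eta})_{\eta \in \omega^{\omega}}$ over $b_{\emptyset}$ with local basedness on $(c_{\eta})$ closes the argument.
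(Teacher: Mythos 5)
Your plan has a genuine gap at the ``transfer back'' step, and I don't think it can be patched without essentially reinventing the paper's construction.

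The problem is that extracting a treetop indiscernible $(b_{\eta})_{\eta \in \omega^{\leq \omega}}$ locally based on $(c_{\eta})_{\eta \in \omega^{\leq\omega}}$ severs the connection to the original data: local basedness gives a one-directional implication (if $\varphi(b_{\bar\mu})$ holds then $\varphi(c_{\bar\mu''})$ holds for \emph{some} $\bar\mu''$ with the same qftp), which is not enough to transport the conclusion ``$(b_{\eta})_{\eta\in\omega^\omega}$ is indiscernible over $b_\emptyset$'' back into a statement about the $(c_\eta)$, let alone about $(a_\eta)_{\eta\in J}$ over $a_\nu$. For your argument to close, the assignment $(c_\eta)$ would itself need to be treetop indiscernible (in which case extraction is a no-op up to conjugation). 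But this cannot be arranged with $c_\emptyset = a_\nu$ and $c_\eta = a_{\pi(\eta)}$ for leaves $\eta$: since $(a_\eta)_{\eta\in\mathcal{T}}$ is treetop indiscernible, $\mathrm{tp}(a_\nu, a_{\pi(\bar\mu)})$ is governed by $\mathrm{qftp}_{L_{0,P}}(\nu, \pi(\bar\mu))$ in $\mathcal{T}$, where $\nu$ is \emph{not} below the elements of $J$; while treetop indiscernibility of $(c_\eta)$ would require this type to depend only on $\mathrm{qftp}_{L_{0,P}}(\emptyset, \bar\mu)$ in $\omega^{\leq\omega}$, where $\emptyset$ \emph{is} below the leaves. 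There are tuples $\bar\eta, \bar\eta' \in J^{n}$ increasing for which $\mathrm{qftp}_{L_{0,P}}(\bar\eta,\nu) \ne \mathrm{qftp}_{L_{0,P}}(\bar\eta',\nu)$ (e.g.\ branching off $\nu$'s spine at a common node versus at distinct nodes), so no choice of $\pi$ that realizes both configurations can produce a treetop indiscernible $(c_\eta)$ with $a_\nu$ at the root. Also, a minor point: after extraction one only has $b_\emptyset \equiv a_\nu$, not $b_\emptyset = a_\nu$, which is a further sign that you've drifted away from the concrete $(a_\eta)$.

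The paper's proof avoids the extraction entirely. It finds an $L_0$-embedding $f : \mathcal{T} \to \mathcal{T}_-$ sending a leaf above $\nu$ down to $\nu$, so $\nu$ becomes a \emph{leaf} (not the root) of the auxiliary $L_{0,P}$-structure $\mathcal{S} = f(\mathcal{T})$ with $P^{\mathcal{S}} = f(\mathcal{T}_+)$; then each $\eta \in \mathcal{S}_+$ is paired with an actual leaf $\zeta(\eta) \in \mathcal{T}_+$ above it, and one sets $b_\eta = (a_\eta, a_{\zeta(\eta)})$. The point is that $\mathrm{qftp}_{L_{0,P}}$ of a tuple in $\mathcal{S}$ together with its $\zeta$-extension controls its $\mathrm{qftp}_{L_{0,P}}$ in $\mathcal{T}$, so $(b_\eta)_{\eta\in\mathcal{S}}$ is treetop indiscernible \emph{as constructed} (the paper's Claim 1) — no extraction needed, so nothing to transfer. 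Treelessness then makes $(b_\eta)_{\eta\in\mathcal{S}_+}$ an indiscernible sequence; since $\nu \in \mathcal{S}_+$ lies in the middle of that sequence, the elements to its left are indiscernible over $b_\nu$, and projecting to the $\zeta$-coordinate gives an $a_\nu$-indiscernible sequence. A second claim shows that every $\mathrm{qftp}_{L_{0,P}}(\bar\eta,\nu)$ for $\bar\eta \in J^n$ increasing is realized by some $(\zeta(\bar\xi),\nu)$ with $\bar\xi$ from that left segment, which, by treetop indiscernibility of the ambient tree, finishes the proof.
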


\begin{proof}
By symmetry, it suffices to prove that $(a_{\eta})_{\eta \in J}$ is $a_{\nu}$-indiscernible. 
As $\mathcal{T}_{-}$ is the Fra\"iss\'e limit of finite meet trees (as $L_{0}$-structures) it is $\aleph_{0}$-saturated, and thus there is an $L_{0}$-embedding $f: \mathcal{T} \to \mathcal{T}_{-}$ sending some element from $\mathcal{T}_+$ above $\nu$ in the tree partial order to $\nu$. 
Let $\mathcal{S}$ be the image of $f$, and set $\mathcal{S}_{+} = f(\mathcal{T}_{+})$. Interpret $P^{\mathcal{S}} = \mathcal{S}_{+}$, so that $\mathcal{S} \cong \mathcal{T}$  as $L_{0,P}$-structures. Note that $\mathcal{S}$ is an $L_{0}$-substructure of $\mathcal{T}$, but it is \emph{not} an $L_{0,P}$-substructure of $\mathcal{T}$. 


For each element $\eta \in \mathcal{S}_{+}$, pick some $\zeta(\eta) \in \mathcal{T}_{+}$ with $\eta \unlhd \zeta(\eta)$.  Define $(b_{\eta})_{\eta \in \mathcal{S}}$ as follows:  for $\eta \in \mathcal{S}_{-}$, we set $b_{\eta} = a_{\eta}$ and, for $\eta \in \mathcal{S}_{+}$, we set $b_{\eta} = (a_{\eta},a_{\zeta(\eta)})$.  

\begin{claim} \label{cla:new tree is treetop ind}
    $(b_{\eta})_{\eta \in \mathcal{S}}$ is treetop indiscernible.
\end{claim}

\begin{proof}[Proof of Claim]
    Suppose $\overline{\eta}$ and $\overline{\xi}$ are finite tuples from $\mathcal{S}$ with $\mathrm{qftp}_{L_{0,P}}(\overline{\eta}) = \mathrm{qftp}_{L_{0,P}}(\overline{\xi})$.  We may assume that $\overline{\eta} = (\overline{\eta}_{0}, \overline{\eta}_{1})$ and $\overline{\xi} = (\overline{\xi}_{0}, \overline{\xi}_{1})$ with $\overline{\eta}_{0},\overline{\xi}_{0} \in \mathcal{S}_{-}$ and $\overline{\eta}_{1}, \overline{\xi}_{1} \in \mathcal{S}_{+}$.  Let $\overline{\eta}' = (\overline{\eta}_{0}, \overline{\eta}_{1}, \zeta(\overline{\eta}_{1}))$ and, likewise, $\overline{\xi}' = (\overline{\xi}_{0}, \overline{\xi}_{1}, \zeta(\overline{\xi}_{1}))$, finite tuples from $\mathcal{T}$.  Our assumption that $\mathrm{qftp}_{L_{0,P}}(\overline{\eta}) = \mathrm{qftp}_{L_{0,P}}(\overline{\xi})$ in $\mathcal{S}$ entails that $\mathrm{qftp}_{L_{0,P}}(\overline{\eta}') = \mathrm{qftp}_{L_{0,P}}(\overline{\xi}')$ in $\mathcal{T}$, and therefore that $a_{\overline{\eta}'} \equiv a_{\overline{\xi}'}$.  By the definition of $(b_{\eta})_{\eta \in \mathcal{S}}$, it follows that $b_{\overline{\eta}} \equiv b_{\overline{\xi}}$, which proves the claim.
\end{proof}

Next, we establish the following:

\begin{claim}\label{cla:witnesses in new tree}
    For all finite $\overline{\eta}$ from $J$, there is some $\overline{\xi}$ from $\mathcal{S}_{+}$ such that
    $$
    \mathrm{qftp}_{L_{0,P}}(\overline{\eta},\nu) = \mathrm{qftp}_{L_{0,P}}(\zeta(\overline{\xi}),\nu)
    $$
    holds in $\mathcal{T}$.
\end{claim}

\begin{proof}[Proof of Claim]
The proof uses the following easy observation.

\noindent\hypertarget{guys on top}{$(\dagger)$} In $\mathcal{T}$, if $a_0<_{lex} \ldots <_{lex} a_{k-1}$, $a_i, a_j$ are not comparable in the tree partial order for distinct $i,j$ and and $a_i \trianglelefteq b_i$ for all $i<k$, then $\mathrm{qftp}_{L_{0}}(\overline{a}) = \mathrm{qftp}_{L_{0}}(\overline{b})$. (This is true since $b_i \wedge b_j = a_i \wedge a_j$ for any $i,j<k$.)

Let $\eta' \in \mathcal{T}_+$ be such that $f(\eta') = \nu$. Recall that $f$ was chosen so that $\eta' \trianglerighteq \nu$. By choice of $f$, $\mathrm{qftp}_{L_{0}}(\overline{\eta},\eta') = \mathrm{qftp}_{L_{0}}(f(\overline{\eta}),\nu)$. By \hyperlink{guys on top}{$(\dagger)$}, this type is equal to  $\mathrm{qftp}_{L_{0}}(\overline{\eta},\nu)$ on the one hand (since $\nu \trianglelefteq \eta'$) and to  $\mathrm{qftp}_{L_{0}}(\zeta(f(\overline{\eta})),\nu)$ on the other hand (since in general, $\eta \trianglelefteq \zeta(\eta)$). This gives the desired equality of types without $P$, but on both generated structures, the only elements from $P$ are $\overline{\eta}$ and $\zeta(\overline{\eta})$. Together, we are done.
\end{proof}
By treelessness and \cref{cla:new tree is treetop ind}, it follows that $(b_{\eta})_{\eta \in \mathcal{S}_{+}}$ is an indiscernible sequence.  By definition, this entails that $(a_{\zeta(\eta)})_{\eta \in \mathcal{S}_{+}, \eta <_{lex} \nu}$ is an $a_{\nu}$-indiscernible sequence.  Then, by treetop indiscernibility and \cref{cla:witnesses in new tree}, it follows that $(a_{\eta})_{\eta \in J}$ is $a_{\nu}$-indiscernible.  
\end{proof}

We will mostly make use of a certain corollary of \cref{indiscernibility}, but, in order to state it, we will need the following definition from \cite{braunfeld2021characterizations}: 

\begin{defn}
Say $T$ has \emph{indiscernible triviality} if, whenever $I = \langle a_{i} : i < \omega \rangle$ is simultaneously $a$-indiscernible and $b$-indiscernible, then $I$ is $ab$-indiscernible.  
\end{defn}

We note that binary theories clearly have indiscernible triviality, though there are nonbinary examples. 

\begin{cor} \label{indiscernibility cor}
Suppose $T$ is treeless and has indiscernible triviality.  Suppose $(a_{\eta})_{\eta \in \mathcal{T}}$ is treetop indiscernible and $\nu \in \mathcal{T}_{-}$.  Let 
\begin{eqnarray*}
J &=& \{\eta \in \mathcal{T}_{+} : \eta \wedge \nu \vartriangleleft \nu, \eta <_{lex} \nu\} \\
J' &=& \{\eta \in \mathcal{T}_{+} : \eta \wedge \nu \vartriangleleft \nu, \nu <_{lex} \eta\}
\end{eqnarray*}  
Then $(a_{\eta})_{\eta \in J}$ and $(a_{\eta})_{\eta \in J'}$ are $a_{\unrhd \nu}$-indiscernible sequences (with $J$ and $J'$ linearly ordered by $<_{lex}$).  
\end{cor}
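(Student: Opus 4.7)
My plan is to reduce the statement to showing, for each single $\xi$ with $\xi \unrhd \nu$, that $(a_\eta)_{\eta \in J}$ is $a_\xi$-indiscernible, and then to combine the individual results using indiscernible triviality. By induction on the two-parameter formulation of indiscernible triviality, if a sequence $I$ is $a_i$-indiscernible for each $i$ in a finite set, then $I$ is $(a_i)_i$-indiscernible; and since indiscernibility has finite character, this upgrade extends to the possibly infinite tuple $(a_\xi)_{\xi \unrhd \nu}$. The statement for $J'$ will be handled symmetrically.

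If $\xi \in \mathcal{T}_-$ with $\xi \unrhd \nu$, the reduction to \cref{indiscernibility} is immediate: for each $\eta \in J$, since $\eta$ lies outside the subtree rooted at $\nu$ and $\xi$ lies in that subtree, $\eta \wedge \xi = \eta \wedge \nu \vartriangleleft \nu \unlhd \xi$, and $\eta <_{lex} \nu \leq_{lex} \xi$. Hence $J$ is contained in the analogous set built from $\xi$, and applying \cref{indiscernibility} with $\xi$ in place of $\nu$ yields $a_\xi$-indiscernibility of $(a_\eta)_{\eta \in J}$.

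The main obstacle is the leaf case $\xi \in \mathcal{T}_+$. For this I would reopen the proof of \cref{indiscernibility}, keeping the $L_0$-embedding $f : \mathcal{T} \to \mathcal{T}_-$ as before but making the additional choice $\zeta(\nu) = \xi$ in the auxiliary map $\zeta : \mathcal{S}_+ \to \mathcal{T}_+$; this is legal since $\nu \in \mathcal{S}_+$ and $\xi \unrhd \nu$ is a leaf of $\mathcal{T}$. Then $b_\nu = (a_\nu, a_\xi)$, and treelessness applied to the treetop indiscernible $(b_\eta)_{\eta \in \mathcal{S}}$ (provided by \cref{cla:new tree is treetop ind}) yields that $(b_\eta)_{\eta \in \mathcal{S}_+}$ is an indiscernible sequence. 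Restricting to $\eta <_{lex} \nu$ and projecting gives that $(a_{\zeta(\eta)})_{\eta \in \mathcal{S}_+,\, \eta <_{lex} \nu}$ is indiscernible over $(a_\nu, a_\xi)$.

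The final step is to transfer this indiscernibility to $(a_\eta)_{\eta \in J}$ by running through the witness argument of \cref{cla:witnesses in new tree}. The one new observation required is that the matching of $L_{0,P}$-quantifier-free types over $\nu$ produced in that claim automatically promotes to a matching over $\{\nu, \xi\}$: for any $\wedge$-closed tuple lying outside the subtree of $\nu$, meets with $\xi$ equal meets with $\nu$ (since $\xi$ is a leaf above $\nu$), lex comparisons with $\xi$ are forced by lex comparisons with $\nu$ (as $\nu \leq_{lex} \xi$), and $P$-relations are preserved. Treetop indiscernibility of $(a_\eta)_{\eta \in \mathcal{T}}$ then turns the strengthened qftp-matching into an identity of types over $(a_\nu, a_\xi)$, giving $(a_\nu, a_\xi)$-indiscernibility of $(a_\eta)_{\eta \in J}$, and in particular $a_\xi$-indiscernibility, completing the plan.
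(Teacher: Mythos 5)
Your proposal is correct, but you take a noticeably harder route for the leaf case than the paper does, and it is worth understanding why the shortcut exists. You split into singletons $\xi \unrhd \nu$ and handle each $\xi \in \mathcal{T}_+$ by reopening the proof of Lemma~\ref{indiscernibility}, rechoosing $\zeta(\nu) = \xi$, and then promoting the qftp-matching in Claim~\ref{cla:witnesses in new tree} from ``over $\nu$'' to ``over $\nu,\xi$''. All of that goes through: the chosen $\bar\mu$ can be taken to be $f(\bar\eta)$, both $\bar\eta$ and $\zeta(\bar\mu)$ lie outside $C(\nu)$, and for such tuples adding a leaf $\xi$ with $\nu \unlhd \xi$ to the $\wedge$-closure contributes only $\xi$ itself with relations entirely determined by those with $\nu$. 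But this modification is unnecessary. The paper's observation is that, by treelessness, the entire sequence $(a_\eta)_{\eta \in \mathcal{T}_+}$ is \emph{already} a single $<_{lex}$-indiscernible sequence (since $\mathcal{T}_+ = \bigcup_{\xi \in \mathcal{T}_-} C(\xi)$), and $J$ is lex-downward-closed and entirely $<_{lex}$-below $C(\nu)$; hence $(a_\eta)_{\eta \in J}$ is indiscernible over $(a_\eta)_{\eta \in C(\nu)}$ by the elementary fact that an initial segment of an indiscernible sequence is indiscernible over the complementary final segment. This handles all leaves above $\nu$ in one stroke, with no need to revisit Lemma~\ref{indiscernibility}. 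Both proofs then finish the same way: your non-leaf case is exactly the paper's (applying Lemma~\ref{indiscernibility} at each $\nu' \unrhd \nu$ in $\mathcal{T}_-$ and using $J \subseteq J_{\nu'}$), and both combine the pieces by indiscernible triviality plus finite character of indiscernibility. So: your argument is sound and essentially self-contained, but the paper buys itself a much shorter leaf case by exploiting that treelessness makes the full leaf sequence indiscernible, whereas you re-derive, leaf by leaf, a fact that is globally free.
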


\begin{proof}
Fix $\nu$ as in the statement.  For each $\xi \in \mathcal{T}_{-}$, define $J_{\xi}$ so that 
$$
J_{\xi} = \{\eta \in \mathcal{T}_{+} : \eta \wedge \xi \vartriangleleft \nu, \eta <_{lex} \xi\}.
$$
As in the proof of \cref{indiscernibility}, it is enough to prove that $(a_{\eta})_{\eta \in J_{\nu}}$ is $a_{\unrhd \nu}$-indiscernible, by symmetry.  Note that if $\nu' \unrhd \nu$, then $J_{\nu'} \supseteq J_{\nu}$ and $(a_{\eta})_{\eta \in J_{\nu'}}$ is $a_{\nu'}$-indiscernible by \cref{indiscernibility} and thus, \emph{a fortiori}, $(a_{\eta})_{\eta \in J_{\nu}}$ is $a_{\nu'}$-indiscernible.  Moreover, by treelessness, $(a_{\eta})_{\eta \in \mathcal{T}_{+}}$ is an indiscernible sequence so $(a_{\eta})_{\eta \in J_{\nu}}$ is indiscernible over $(a_{\eta})_{\eta \in C(\nu)}$.  It follows by indiscernible triviality that $(a_{\eta})_{\eta \in J_{\nu}}$ is $a_{\unlhd \nu}$-indiscernible. 
\end{proof}

\begin{lem} \label{witnessing lemma}
Assume $T$ has SOP$_{2}$ witnessed by the formula $\varphi(x;y)$.  Then there is a treetop indiscernible $(a_{\eta})_{\eta \in \omega^{\leq \omega}}$ satisfying the following:
\begin{itemize}
\item For all $\eta \perp \nu$ from $\omega^{<\omega}$, $\{\varphi(x;a_{\eta}), \varphi(x;a_{\nu})\}$ are inconsistent.
\item For all $\eta \in \omega^{\omega}$, $a_{\eta} \vDash \{\varphi(x;a_{\eta | i }) : i < \omega\}$.
\end{itemize}
\end{lem}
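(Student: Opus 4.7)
The plan is to build a tree $(d_\eta)_{\eta \in \omega^{\leq \omega}}$ of tuples that encodes the SOP$_{2}$ data at the non-leaves and realizations of consistent branches at the leaves, and then apply the modeling property for treetop indiscernibles (\cref{coloring lemma}, \cref{Ramsey class cor}) to obtain a treetop indiscernible locally based on $(d_\eta)$, from which the desired $(a_\eta)$ is read off as appropriate sub-coordinates.

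Concretely, let $(b_\eta)_{\eta \in \omega^{<\omega}}$ witness SOP$_{2}$ for $\varphi(x;y)$, and for each $\eta \in \omega^\omega$ pick $c_\eta \vDash \{\varphi(x; b_{\eta|n}) : n < \omega\}$ using the consistency of branches. To keep the tuples of uniform arity, fix dummy tuples $b_\star$ and $c_\star$ of arities $|y|$ and $|x|$, and set $d_\eta := (b_\eta, c_\star)$ for $\eta \in \omega^{<\omega}$ and $d_\eta := (b_\star, c_\eta)$ for $\eta \in \omega^\omega$. By the modeling property there is a treetop indiscernible $(d'_\eta)_{\eta \in \omega^{\leq \omega}}$ locally based on $(d_\eta)$. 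Writing each $d'_\eta$ along the same split as $(\hat{b}_\eta, \hat{c}_\eta)$, set $a_\eta := \hat{b}_\eta$ for $\eta \in \omega^{<\omega}$ and $a_\eta := \hat{c}_\eta$ for $\eta \in \omega^\omega$. This $(a_\eta)$ is automatically treetop indiscernible: two nodes of the same one-variable $L_{0,P}$-qf-type lie on the same side of $P$, so the corresponding $a_\eta$ have the same arity, and treetop indiscernibility passes to the same choice of sub-coordinate at each position.

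The two required properties then transfer from $(d_\eta)$ to $(a_\eta)$ via the EM-type. For incompatibility: the formula $\psi(w, z) := \forall x (\neg\varphi(x; w') \vee \neg\varphi(x; z'))$, where $w'$ picks out the first $|y|$ coordinates, holds on every pair $(d_{\eta'}, d_{\nu'})$ with $\eta', \nu' \in \omega^{<\omega}$ incomparable, by SOP$_{2}$ applied to $(b_{\eta'}, b_{\nu'})$. Any pair of incomparable non-leaves has the same $L_{0,P}$-quantifier-free type, so $\psi$ lies in the EM-type and transfers to $(d'_\eta, d'_\nu)$, yielding inconsistency of $\{\varphi(x; a_\eta), \varphi(x; a_\nu)\}$. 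For branch consistency: the formula $\chi(w, z) := \varphi(z''; w')$, with $z''$ picking out the last $|x|$ coordinates, holds on every pair $(d_\xi, d_{\eta'})$ with $\xi \in \omega^{<\omega}$ strictly below $\eta' \in \omega^\omega$, by construction of $c_{\eta'}$; any such pair has a common qf-type, so $\chi$ lies in the EM-type and the transfer $\vDash \chi(d'_{\eta|n}, d'_\eta)$ says $\vDash \varphi(a_\eta; a_{\eta|n})$ for every $\eta \in \omega^\omega$ and $n < \omega$.

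No serious obstacle arises; the entire argument is a direct application of the modeling property established earlier in the paper, with the content of SOP$_{2}$ hardwired into the construction of $(d_\eta)$. The only minor subtlety is the arity bookkeeping just described, arranging a single tree of tuples of uniform arity via dummy padding, so that the modeling property can be invoked in one stroke rather than piecemeal on the leaf- and non-leaf-indexed families.
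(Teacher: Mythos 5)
Your proof is correct and takes essentially the same approach as the paper: extend the SOP$_2$ tree by leaf realizations, apply the modeling property for treetop indiscernibles to get a locally based treetop indiscernible, and read the two required properties off the EM-type. The paper applies the modeling property directly to the non-uniform-arity family $(b_\eta)_{\eta\in\omega^{\leq\omega}}$ and dismisses the verification as ``easy to check,'' so your padding device and explicit EM-type computations simply make those implicit steps precise.
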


\begin{proof}
Let $(b_{\eta})_{\eta \in \omega^{<\omega}}$ be a tree of tuples witnessing that $\varphi$ has SOP$_{2}$, i.e. 
\begin{enumerate}
\item For all $\eta \perp \nu$, $\{\varphi(x;b_{\eta}), \varphi(x;b_{\nu})\}$ is inconsistent.
\item For all $\eta \in \omega^{\omega}$, $\{\varphi(x;b_{\eta | i}) : i < \omega\}$ is consistent.
\end{enumerate}
Choose, for each $\eta \in \omega^{\omega}$, some $b_{\eta} \vDash \{\varphi(x;b_{\eta | i }) : i< \omega\}$.  Let $(a_{\eta})_{\eta \in \omega^{\leq \omega}}$ be any treetop indiscernible locally based on $(b_{\eta})_{\eta \in \omega^{\leq \omega}}$.  It is easy to check that this satisfies the desired properties. 
\end{proof}

\begin{thm} \label{nsop3 to nsop2}
Suppose $T$ is a treeless theory with indiscernible triviality.  Then if $T$ has SOP$_{2}$, then $T$ has SOP$_{3}$.  
\end{thm}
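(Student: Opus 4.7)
The plan is to prove the contrapositive: assume $T$ is treeless with indiscernible triviality and has SOP$_2$, and construct witnesses of SOP$_3$. By \cref{witnessing lemma} applied to a formula $\varphi(x;y)$ witnessing SOP$_2$, I obtain a treetop indiscernible $(a_\eta)_{\eta \in \omega^{\leq \omega}}$ in which $(\varphi(x;a_\eta))_{\eta \in \omega^{<\omega}}$ is SOP$_2$-inconsistent on incomparable pairs and each leaf $a_\eta$ realizes $\{\varphi(x;a_{\eta|i}) : i<\omega\}$. Using the modeling property (\cref{Ramsey class cor}), I reindex along $\mathcal{T}$, the Fra\"iss\'e limit of $\mathbb{K}_{0,P}$, so that I may freely exploit the homogeneity of $\mathcal{T}$ to realize finite $L_{0,P}$-configurations.

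The central step is to identify a sequence of pairs of internal nodes $(\alpha_i,\beta_i)_{i<\omega}$ in $\mathcal{T}_{-}$ such that $\beta_i \perp \alpha_j$ for all $i<j$, while $\{\alpha_i : i\leq k\} \cup \{\beta_j : j>k\}$ extends to a chain in $\mathcal{T}$ for each $k$; the sequence of tuples $c_i=(a_{\alpha_i},a_{\beta_i})$ must also be indiscernible. Taking $\psi_0(x;y,z)=\varphi(x;y)$ and $\psi_1(x;y,z)=\varphi(x;z)$, the SOP$_2$ property of $\varphi$ immediately translates $\beta_i \perp \alpha_j$ into the required inconsistency of $\{\psi_1(x;c_i),\psi_0(x;c_j)\}$ for $i<j$, and the chain condition into consistency of $\{\psi_0(x;c_i):i\leq k\}\cup\{\psi_1(x;c_j):j>k\}$ (any chain of internal nodes extends, via the homogeneity of $\mathcal{T}$, to a branch whose leaf-value realizes all the corresponding $\varphi$-instances). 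Indiscernibility of $(c_i)_{i<\omega}$ is where the hypotheses essentially enter: \cref{indiscernibility cor}, which combines treelessness with indiscernible triviality, provides indiscernibility of leaf-subsequences lying to one side of a fixed internal node \emph{over the entire chain of ancestors at or above that node}, and a Ramsey-and-compactness extraction upgrades this to indiscernibility of the internal-node tuples $(c_i)$.

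The main obstacle will be constructing the geometric configuration itself: the chain requirement on $\{\alpha_i : i\leq k\}\cup\{\beta_j : j>k\}$ and the incomparability requirement $\beta_i\perp\alpha_j$ pull against each other inside a tree, since the naive attempt to place the $\alpha$'s along one branch and each $\beta_i$ in a side-branch at $\alpha_i$ leaves $\{\beta_j : j>k\}$ pairwise incomparable and hence not a chain. The correct arrangement must instead place the $\beta$'s along a secondary chain interleaved with the $\alpha$-chain in a controlled way; carrying out this construction inside $\mathcal{T}$ uses both the Fra\"iss\'e homogeneity of $\mathcal{T}$ and the strengthened \emph{over-the-ancestors} indiscernibility furnished by \cref{indiscernibility cor}. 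Without treelessness or indiscernible triviality, the extracted sequence would fail to be sufficiently indiscernible for the candidate $\psi_0,\psi_1$ to behave uniformly, so that the hypotheses play a genuinely essential role in the argument.
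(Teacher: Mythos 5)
Your central construction does not exist. You propose to find internal nodes $(\alpha_i,\beta_i)_{i<n}$ in $\mathcal{T}_-$ such that $\beta_i \perp \alpha_j$ for $i<j$ while $\{\alpha_i : i\leq k\}\cup\{\beta_j : j>k\}$ extends to a chain for each $k<n$. Already for $n=4$ this is combinatorially impossible in any meet-tree. To see this, note first that the requirement that $\alpha_i$ be comparable to both $\beta_j$ and $\alpha_{j+1}$ for $i<j$, together with $\beta_j\perp\alpha_{j+1}$, forces $\alpha_i\unlhd\beta_j$ and $\alpha_i\unlhd\alpha_{j+1}$; so the $\alpha$-chain is increasing and $\alpha_i\unlhd\beta_j$ whenever $i<j$. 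Now $\beta_1,\beta_2,\beta_3$ are pairwise comparable ($k=0$ chain), $\alpha_1\unlhd\beta_2$, and $\alpha_2\unlhd\beta_3$. If $\beta_1\unlhd\beta_2$, then $\beta_1$ and $\alpha_1$ are both ancestors of $\beta_2$, hence comparable, and since $\beta_1\perp\alpha_2$ while $\alpha_1\unlhd\alpha_2$, we must have $\alpha_1\unlhd\beta_1\unlhd\beta_2$; comparing $\beta_2,\beta_3$ then forces $\alpha_2$ and $\beta_1$ to be ancestors of a common node and hence comparable, contradicting $\beta_1\perp\alpha_2$. If instead $\beta_2\unlhd\beta_1$, then comparing with $\beta_3$ gives either $\beta_3\unlhd\beta_2\unlhd\beta_1$ (so $\alpha_2\unlhd\beta_3\unlhd\beta_1$, contradiction) or $\beta_2\unlhd\beta_3$ with $\alpha_2\unlhd\beta_3$, forcing $\beta_2$ and $\alpha_2$ comparable and then $\beta_2\unlhd\alpha_2\unlhd\alpha_3$, contradicting $\beta_2\perp\alpha_3$. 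Every branch of the case analysis collapses, so no such family of internal nodes exists. You flag the configuration as "the main obstacle," but the proposed remedy of interleaving the $\beta$'s into a secondary chain cannot repair an impossibility.

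The paper's actual proof sidesteps this entirely by \emph{not} indexing the SOP$_3$ parameters by nodes of the tree. Instead, it fixes two incomparable internal nodes $\eta\perp\nu$ and exploits the fact (via treelessness plus \cref{indiscernibility cor} and indiscernible triviality) that certain automorphisms of the leaf order $(\mathcal{T}_+,<_{lex})$, regarded purely as a DLO, induce partial elementary maps over $a_{\unrhd\eta}$. Composing such maps moves $a_\eta$ and $a_\nu$ to new tuples $b_i,a_i\in\mathbb{M}$ that are \emph{elementary images} but need not be of the form $a_\xi$ for any node $\xi$. The disjointness of intervals in the leaf order then yields the required inconsistencies, and a nonempty intersection of suitably staggered intervals supplies the leaf $a_{\xi^*}$ witnessing consistency. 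This is exactly the point where the tree's rigidity is evaded: the SOP$_3$ parameters live in the monster, not in $\mathcal{T}$. You would need some analogue of this step to make any version of your plan work, and it is not recoverable from the leaf-sequence indiscernibility and a Ramsey extraction as sketched.
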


\begin{proof}
Assume $T$ has SOP$_{2}$, witnessed by the formula $\varphi(x;y)$.  Then, by \cref{witnessing lemma} and compactness, we can find a treetop indiscernible $(a_{\eta})_{\eta \in \mathcal{T}}$ satisfying the following:
\begin{enumerate}
\item If $\eta \perp \nu$ are from $\mathcal{T}_{-}$, then $\{\varphi(x;a_{\eta}), \varphi(x;a_{\nu})\}$ is inconsistent. 
\item If $\eta^{*} \in \mathcal{T}_{+}$, then $a_{\eta^{*}} \vDash \{\varphi(x;a_{\eta}) : \eta \vartriangleleft \eta^{*}\}$.  
\end{enumerate}  
Let $\psi_{0}(x;y,z) = \varphi(x;y)$ and $\psi_{1}(x;y,z) = \varphi(x;z)$.  We will show that $\psi_{0}$ and $\psi_{1}$ witness SOP$_{3}$.  

By compactness, it suffices to show, for each $n$, that there is a sequence $(d_{i})_{i < n}$ such that 
\begin{enumerate}
\item $\{\psi_{0}(x;d_{i}) : i \leq j \} \cup \{\psi_{1}(x;d_{i}) : j < i < n\}$ is consistent for all $j \leq n$.  
\item If $i < j < n$, then $\{\psi_{1}(x;d_{i}), \psi_{0}(x;d_{j})\}$ is inconsistent. 
\end{enumerate}  
So fix an arbitrary $n \geq 1$.  Choose arbitrary $\eta \perp \nu$ in $\mathcal{T}_{-}$ with $\eta <_{lex} \nu$.  We choose $\eta^{*}_{l,0} <_{lex} \eta^{*}_{r,0}$ in $\mathcal{T}_{+}$ with $\eta^{*}_{l,0} \wedge \eta^{*}_{r,0} = \eta$ and, likewise, $\nu^{*}_{l,0} <_{lex} \nu^{*}_{r,0}$ in $\mathcal{T}_{+}$ with $\nu^{*}_{l,0} \wedge \nu^{*}_{r,0} = \nu$.  

Now we choose $\nu^{*}_{l,1}, \nu^{*}_{r,1}, \ldots , \nu^{*}_{l,n-1}, \nu^{*}_{r,n-1} \in \mathcal{T}_{+}$ such that 
$$
\nu^{*}_{l,0} <_{lex} \nu^{*}_{l,1} \ldots <_{lex} \nu^{*}_{l,n-1} <_{lex} \nu^{*}_{r,0} <_{lex} \nu^{*}_{r,1} <_{lex}  \ldots <_{lex} \nu^{*}_{r,n-1}. 
$$
We define some intervals in $\mathcal{T}_{+}$ as follows:
$$
I_{0} = \{\xi \in \mathcal{T}_{+} : \eta^{*}_{l,0} <_{lex} \xi <_{lex} \eta^{*}_{r,0}\},
$$
and, for all $i < n$, 
$$
J_{i} = \{\xi \in \mathcal{T}_{+} : \nu^{*}_{l,i} <_{lex} \xi <_{lex} \nu^{*}_{r,i}\}.
$$
Then, finally, we define 
$$
K = \{\xi \in \mathcal{T}_{+} : \xi \wedge \nu \vartriangleleft \nu, \xi <_{lex} \nu\}.  
$$
\begin{claim}
    There are $\sigma_{0}, \ldots, \sigma_{n-1} \in \mathrm{Aut}(\mathcal{T}_{+}, <_{lex})$ (where $(\mathcal{T}_{+}, <_{lex})$ is regarded as a dense linear order with no additional structure) satisfying the following:
    \begin{enumerate}
        \item $\sigma_{i}(J_{0}) = J_{i}$ for all $i < n$.  
        \item $\nu^{*}_{l,i} \in \sigma_{i+1}(K)$ for all $i < n-1$.
        \item The map $(a_{\xi})_{\xi \in \mathcal{T}_{+}} \mapsto (a_{\sigma_{i}(\xi)})_{\xi \in \mathcal{T}_{+}}$ is partial elementary over $a_{\eta}$ for all $i < n$.  
    \end{enumerate}
\end{claim}

\begin{proof}[Proof of Claim]
    To begin, we define $\sigma_{0}$ to be the identity map.  Assume $\sigma_{0}, \ldots, \sigma_{i}$ have been defined.  Write $\mathcal{T}_{+}$ as the disjoint union $L_{0} \cup L_{1}$ where 
    $$
    L_{0} = \{\xi \in \mathcal{T}_{+} : (\exists \eta')[\eta \unlhd \eta' \wedge  \xi <_{lex} \eta']\}.
    $$
    Then it is easy to see that $L_{0}$ and $L_{1}$ are both countable dense linear orders without endpoints and $L_{0} \subsetneq K$.  Pick some $\zeta \in K \setminus L_{0}$ and some $\zeta' \in \{\xi \in \mathcal{T}_{+} : \nu^{*}_{l,i} <_{lex} \xi <_{lex} \nu^{*}_{l,i+1}\}$.  Define $\tau_{0} \in \mathrm{Aut}((L_{0},<_{lex}))$ to be the identity and $\tau_{1} \in \mathrm{Aut}((L_{1},<_{lex}))$ to be an automorphism mapping $(\zeta,\nu^{*}_{l,0},\nu^{*}_{r,0}) \mapsto (\zeta',\nu^{*}_{l,i+1},\nu^{*}_{r,i+1})$.  Then define $\sigma_{i+1} = \tau_{0} \cup \tau_{1}$, which is an automorphism of $(\mathcal{T}_{+}, <_{lex})$ with $\sigma_{i+1}(J_{0}) = J_{i+1}$.  Moreover, since $K$ is an initial segment of $\mathcal{T}_{+}$, it follows $\sigma_{i+1}(K)$ is an initial segment of $\mathcal{T}_{+}$.  Since $\sigma_{i+1}(K)$ also must contain $\zeta'$ and $\zeta' >_{lex} \nu^{*}_{l,i}$, we must also have $\nu^{*}_{l,i} \in \sigma_{i+1}(K)$.  Finally, we know by treelessness that $(a_{\xi})_{\xi \in L_{1}}$ is $(a_{\xi})_{\xi \in L_{0}}$-indiscernible and also $a_{\unrhd \eta}$-indiscernible, by \cref{indiscernibility cor}, as we have 
    $$
    L_{1} = \{\xi \in \mathcal{T}_{+} : \xi \wedge \eta \vartriangleleft \eta, \eta <_{lex} \xi\}.  
    $$
    It follows by indiscernible triviality that $(a_{\xi})_{\xi \in L_{1}}$ is $(a_{\xi})_{\xi \in L_{0}}a_{\unrhd \eta}$-indiscernible.  Thus the mapping $(a_{\xi})_{\xi \in \mathcal{T}_{+}} \mapsto (a_{\sigma_{i+1}(\xi)})_{\xi \in \mathcal{T}_{+}}$ is partial elementary over $a_{\eta}$.
\end{proof}

Now we pick $\eta^{*}_{l,i}, \eta^{*}_{r,i} \in \sigma_{i}(K)$ such that the $(\eta^{*}_{l,i})_{i < n}$ and $(\eta^{*}_{r,i})_{i < n}$ are increasing and, moreover, $\eta^{*}_{l,i} <_{lex} \eta^{*}_{r,0} <_{lex} \nu^{*}_{l,i-1} 
<_{lex} \eta^{*}_{r,i}$ for each $1 \leq i < n$. 
Note that, since $\sigma_{i}(J_{0}) = J_{i}$, we also have $\eta^{*}_{r,i} <_{lex} \nu^{*}_{l,i}$.  Define 
$$
I_{i} = \{\xi \in \mathcal{T}_{+} : \eta^{*}_{l,i} <_{lex} \xi <_{lex} \eta^{*}_{r,i}\}.
$$
Note that we have 
$$
\bigcap_{k \leq i} J_{i} \cap \bigcap_{i < k < n} I_{i} \supseteq (\nu^*_{l,i},\eta^*_{r,i+1}) \neq \emptyset
$$
for all $i < n$, where $\eta^*_{r,n} = \nu^*_{r,0}$ and 
$$
I_{i} \cap J_{j} = \emptyset
$$
for all $i \leq j < n$.  

For each $i < n$, let $\tilde{\sigma_{i}} \in \mathrm{Aut}(\mathbb{M}/a_{\eta})$ extend the mapping $(a_{\xi})_{\xi \in \mathcal{T}_{+}} \mapsto (a_{\sigma_{i}(\xi)})_{\xi \in \mathcal{T}_{+}}$ with $\tilde{\sigma_{0}}$ defined to be the identity.  Define $a_{0} = a_{\nu}$ and $a_{i} = \tilde{\sigma_{i}}(a_{\nu})$ for $1 \leq i < n$.  By \cref{indiscernibility}, we have 
$$
\{a_{\xi} : \xi \in K\}
$$
is $a_{0}$-indiscernible and contains $(a_{\xi})_{\xi \in I_{0}}$.  It follows, then, that for each $i < n$, 
$$
\tilde{\sigma_{i}}\left( \{a_{\xi} : \xi \in K\} \right) = \{a_{\sigma_{i}(\xi)} : \xi \in K\} = \{a_{\xi} : \xi \in \sigma_{i}(K)\}
$$
is $a_{i}$-indiscernible and contains $(a_{\xi})_{\xi \in I_{0} \cup \ldots \cup I_{i}}$.  Since we have $K = \sigma_{0}(K) \subseteq \sigma_{1}(K) \subseteq \ldots \subseteq \sigma_{n-1}(K)$, we have, by indiscernible triviality, that $(a_{\xi})_{\xi \in \sigma_{i}(K)}$ is $a_{i}\ldots a_{n-1}$-indiscernible.  Given $1 \leq i < n$, we can find some $\tau_{i} \in \mathrm{Aut}((\mathcal{T}_{+},<_{lex}))$ which restricts to an automorphism of $\sigma_{i}(K)$ taking $I_{0}$ to $I_{i}$ and which is the identity on $\mathcal{T}_{+} \setminus \sigma_{i}(K)$.  Then the mapping $(a_{\xi})_{\xi \in \mathcal{T}_{+}} \mapsto (a_{\tau_{i}(\xi)})_{\xi \in \mathcal{T}_{+}}$ is partial elementary over $a_{i}\ldots a_{n-1}$ so we can find some extension $\tilde{\tau_{i}} \in \mathrm{Aut}(\mathbb{M}/a_{i}\ldots a_{n-1})$ mapping $(a_{\xi})_{\xi \in I_{0}} \mapsto (a_{\xi})_{\xi \in I_{i}}$.  We define $b_{0} = a_{\eta}$ and $b_{i} = \tilde{\tau_{i}}(b_{0})$ for each $1 \leq i < n$.  

This completes the construction, so now we check that it works. Note that, by construction, if $i \leq j$, then 
$$
b_{i}a_{j} \equiv a_{\eta}a_{j} \equiv a_{\eta}a_{\nu}
$$ 
and hence $\{\varphi(x;b_{i}), \varphi(x;a_{j})\}$ is inconsistent, by the definition of SOP$_{2}$.  On the other hand, we know that 
$$
\bigcap_{k \leq i} J_{i} \cap \bigcap_{i < k < n} I_{i} \neq \emptyset
$$
so we can fix some $\xi^{*}$ in this intersection.  Then for each $i < k < n$, we know that $\tau^{-1}_{k}(\xi^{*}) \in I_{0}$ and hence $\vDash \varphi(a_{\tau^{-1}_{k}(\xi^{*})},a_{\eta})$, which implies $\vDash \varphi(a_{\xi^{*}},b_{k})$.  Additionally, for each $k \leq i$, we know $\sigma^{-1}_{k}(\xi^{*}) \in J_{0}$ and hence $\vDash \varphi(a_{\sigma^{-1}_{k}(\xi^{*})},a_{\nu})$, which entails $\vDash \varphi(a_{\xi^{*}},a_{k})$.  This shows 
$$
a_{\xi^{*}} \vDash \{\varphi(x;a_{k}) : k \leq i\} \cup \{\varphi(x;b_{k}) : i < k < n\}.
$$
Therefore, defining $d_{i} = (a_{i},b_{i})$ for all $i < n$, we have proved that $\psi_{0}$ and $\psi_{1}$ have SOP$_{3}$.  
%
\end{proof}

\begin{cor}
If $T$ is a treeless NSOP$_{3}$ theory with indiscernible triviality, then $T$ is simple.  In particular, a binary NSOP$_{3}$ theory is simple. 
\end{cor}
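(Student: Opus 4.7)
The plan is to combine the main theorem of this section with the results about NSOP$_1$ treeless theories from \cref{simple section} and Mutchnik's resolution of the NSOP$_1$ = NSOP$_2$ problem. Specifically, assume $T$ is treeless, has indiscernible triviality, and is NSOP$_3$. Applying the contrapositive of \cref{nsop3 to nsop2} immediately yields that $T$ is NSOP$_2$. Then, invoking Mutchnik's theorem \cite{mutchnik2022nsop} that NSOP$_2$ coincides with NSOP$_1$, we conclude $T$ is NSOP$_1$. Finally, \cref{nsop1 to simple} (treeless NSOP$_1$ theories are simple) finishes the argument.

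For the ``in particular'' clause concerning binary theories, I would observe two facts already noted in the excerpt: first, binary theories are treeless (they are homogeneous in a binary language, and this was recorded among the examples following the definition of treelessness); second, binary theories trivially have indiscernible triviality (since the type of any tuple over a set is determined by types of pairs, as observed in the remark immediately after the definition of indiscernible triviality). Thus a binary NSOP$_3$ theory satisfies the hypotheses of the first part of the corollary, and is therefore simple.

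There is essentially no obstacle: the entire machinery has already been developed. The only subtlety worth flagging explicitly in the write-up is the appeal to Mutchnik's theorem, which is the ingredient that promotes NSOP$_2$ to NSOP$_1$ and is external to this paper; without it, one would only conclude that a treeless NSOP$_3$ theory with indiscernible triviality is NSOP$_2$. I would therefore state the corollary and proof briefly, making the three-step chain (NSOP$_3 \Rightarrow$ NSOP$_2 \Rightarrow$ NSOP$_1 \Rightarrow$ simple, under the standing hypotheses) fully explicit, and then spell out that binarity supplies both treelessness and indiscernible triviality.
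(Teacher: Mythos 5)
Your proof is correct and matches the paper's argument step for step: apply \cref{nsop3 to nsop2} to pass from NSOP$_3$ to NSOP$_2$, use Mutchnik's theorem to drop to NSOP$_1$, and conclude simplicity by \cref{nsop1 to simple}, with binarity supplying both treelessness and indiscernible triviality for the ``in particular'' clause. No differences worth noting.
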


\begin{proof}
By \cref{nsop3 to nsop2}, such $T$ is NSOP$_{2}$ and, by \cite{mutchnik2022nsop}, this entails that $T$ is NSOP$_{1}$ which, in turn, entails that $T$ is simple by \cref{nsop1 to simple}. The `in particular' clause follows because binarity implies treelessness and indiscernible triviality. 
\end{proof}

\subsection*{Acknowledgements}

We would like to thank Alex Kruckman for catching an error with a preliminary version of our characterization of SOP$_{3}$ in terms of interval intersections.

\bibliographystyle{alpha}
\bibliography{ms.bib}{}

\end{document}